\documentclass[12pt]{amsart}
\usepackage{amsmath,amsfonts,amssymb,latexsym}
\usepackage{hyperref}
\usepackage{graphicx}
\usepackage[a4paper, left=2cm, right=2cm, top=2cm, bottom=2cm]{geometry}

\newcommand{\NN}{\mathbb{N}}
\newcommand{\CC}{\mathbb{C}}
\newcommand{\RR}{\mathbb{R}}

\newcommand{\EE}{\mathbb{E}}
\newcommand{\Oo}{\mathcal{O}}
\newcommand{\Ss}{\mathcal{S}}
\newcommand{\Bo}{\mathcal{B}}
\newcommand{\La}{\mathcal{L}}

\newcommand{\dist}{{\rm dist\,}}

\newtheorem{Thm}{Theorem}
\newtheorem{Lem}{Lemma}
\newtheorem{Prop}{Proposition}

\theoremstyle{definition}
\newtheorem{Rem}{Remark}
\newtheorem{Def}{Definition}
\newtheorem{Ex}{Example}

\begin{document}
\subjclass[2020]{35C10, 35E15, 40G10, 30E20}
\keywords{Borel summability, multisummability, moment summability, sequences preserving summability, moment-PDEs, $q$-difference-differential equations, Stokes phenomenon, hyperfunctions}
\title[Sequences of generalized moments]{Sequences of generalized moments and their applications to some moment partial differential equations}
\author{S{\l}awomir Michalik}
  \address{{Faculty of Mathematics and Natural Sciences,
College of Science,
Cardinal Stefan Wyszy{\'n}ski University,
W\'oycickiego 1/3,
01-938 Warszawa, Poland}\newline
{\& Shibaura Institute of Technology,
Department of Engineering and Design,
Saitama 337-8570, Japan}\newline
ORCiD: 0000-0003-4045-9548}
\email{s.michalik@uksw.edu.pl}
\author{Hiroshi Yamazawa}
\address{Shibaura Institute of Technology,
Department of Engineering and Design,
Saitama 337-8570, Japan}
\email{yamazawa@shibaura-it.ac.jp}
\begin{abstract}
We introduce sequences of generalized moments that are a generalization of sequences preserving summability and sequences of moments from Balser's theory of moment summability. We construct extensions of Laplace and Borel integral operators for these new sequences. We find integral representations of moment derivatives for generalized moments.
We show the characterization of summable solutions of moment-PDEs with generalized moments.
We describe singular directions for summable solutions of moment-PDEs with generalized moments and we express jumps across these singular directions in terms of hyperfunctions and in terms of kernel functions associated with sequences preserving summability, which are connected with given sequences of generalized moments.  
\end{abstract}

\maketitle
\section{Introduction}\label{sec:1}
In this paper we would like to propose a new and unified approach to sequences of moments that have arisen from Balser's theory of moment summability \cite[Section 6.5]{B2} and to sequences preserving summability introduced recently in \cite{I-M} and developed in \cite{L-M} and \cite{M-S-T}. Namely, we introduce sequences of generalized moments that are products of sequences of moments and sequences preserving summability. 

For such sequences, in particular, we may find an appropriate extension of Laplace and Borel integral operators and we can construct integral representations of moment derivative of holomorphic functions in the complex neighborhood of the origin.

Sequences of generalized moments are particularly fruitful tool 
for studying summable power series solutions of some linear $q$-difference-differential equations and their extensions to $q$-difference-fractional differential equations
in a general framework of moment partial differential equations for generalized moments. 

The important result of this type is given in Theorem \ref{th:4}, which reduces the problem of characterization of summable solutions for some moment partial differential equations with generalized moments to the same problem for summable solutions of appropriate moment partial differential equations with moments in Balser's sense. 

This new approach is especially useful for the study of the Stokes phenomenon for summable solutions to some moment partial differential equations for generalized moments in the similar spirit to \cite{M-P} and \cite{M-T}. The main results in these directions are given in Theorems \ref{th:5} and \ref{th:6}, where jumps across the singular directions for summable solutions of some moment partial differential equations for generalized moments are calculated. Next, we also calculate explicitly jumps for solutions of some simple $q$-difference-differential and $q$-difference-fractional differential equations (see Examples \ref{ex:4}--\ref{ex:6}). 
It is worth emphasizing that this description of the Stokes phenomenon was our primary motivation for studying sequences of generalized moments. 

This paper also represents a new advance in the theory of sequences preserving summability. Namely, we introduce here the kernel functions $e_m(z)$ and $E_m(z)$ associated with a given sequence $m$ preserving summability, which seem to be new and promising tools in the theory of summability for generalized moments. In particular we use them to construct Borel and Laplace integral operators for sequences preserving summability and for sequences of generalized moments. These kernels are also useful in the description of the Stokes phenomenon for summable solutions of some moment partial differential equations connected with sequences of generalized moments.

The paper is organized as follows. In Sections~\ref{sec:2}--\ref{sec:5} we recall necessary notation, definitions and previously known results connected with theory of summability and with sequences preserving summability. In Section~\ref{sec:6} we define kernel functions associated with a sequence $m$ preserving summability and we describe their properties and applications. In the next section we introduce sequences of generalized moments. Next, in Section \ref{sec:8} we extend Laplace and Borel integral operators to sequences of generalized moments. In the subsequent section we recall the notion of moment differentiation and we find an integral representation of $m$-moment derivative for $m$ being a sequence of generalized moments (Theorem \ref{th:3}). Next, in Section \ref{sec:10} we characterize summable solutions to some moment partial differential equations connected with sequences of generalized moments (Theorem \ref{th:4}). In the last two sections we recall the concept of the Stokes phenomenon for summable formal power series
and we describe jumps across singular directions for summable solutions of some moment partial differential equations connected with sequences of generalized moments (Theorems \ref{th:5} and \ref{th:6}).

\section{Notations}\label{sec:2}
Let $\EE$ stand for a complex Banach space with a norm $\|\cdot\|_{\EE}$. The vector space of formal power series with coefficients in $\EE$ is denoted by $\EE[[t]]$.
For any given set $G\subseteq\CC^n$, by $\Oo(G,\EE)$ we denote the set of all $\EE$-valued holomorphic functions defined on some open set $U$ containing $G$. We also write $\Oo(G)$ if $\EE=\CC$ for simplicity.

An open disc in $\CC$ of radius $r>0$ with a center at the origin is denoted by $D_r$. 
 In case when the radius $r$ is not essential, the set $D_r$ is briefly denoted by $D$. By $\Oo_r$ we denote the Banach space of holomorphic functions on the disc $D_r$, continuous on its closure $\overline{D}_r$ and equipped with a norm $\|f\|_{\Oo_r}:=\sup_{z\in D_r}|f(z)|$. If the radius $r$ is not essential the Banach space $\Oo_r$ is denoted briefly by $\Oo$. Observe that by the Hartogs theorem on separate analyticity we may rewrite the space $\Oo(G,\EE)$ for $\EE=\Oo$ as $\Oo(G\times D)$.

An~\emph{unbounded sector $S$ in a direction $d\in\RR$ with an opening $\alpha>0$} in the universal covering space $\widetilde{\CC}$ of
$\CC\setminus\{0\}$ is defined by
\begin{displaymath}
S=S_d(\alpha):=\{z\in\widetilde{\CC}\colon\ z=r\*e^{i\phi},\ r>0,\ \phi\in(d-\alpha/2,d+\alpha/2)\}.
\end{displaymath}
If the opening $\alpha$ is not essential,
 the sector $S_d(\alpha)$ is denoted briefly by $S_d$.
 
 A \emph{bounded sector} $S(d, \alpha, R)$ is defined by  
 $$S(d, \alpha, R):= \left\{z\in\widetilde{\mathbb{C}}\colon z=re^{i\phi},\ r \in (0, R),\ \phi \in \left(d-\frac{\alpha}{2}, d+\frac{\alpha}{2}\right)\right\}.$$
 
 We say that $G=G(d,\alpha)\subseteq \widetilde{\CC}$ is a \emph{sectorial region of opening $\alpha>0$ and bisecting direction $d\in\RR$} if $G\subseteq S_d(\alpha)$ and for every $\beta\in(0,\alpha)$ there exists $r>0$ such that $S(d,\beta,r)\subseteq G$.  
 
  We also briefly denote a \emph{disc-sector} $S_d(\alpha)\cup D_r$ by $\hat{S}_d(\alpha,r)$ (resp. $S_d(\alpha)\cup D$ by $\hat{S}_d(\alpha)$ and   $S_d\cup D$ by $\hat{S}_d$).

For given $s\geq0$ we will denote by $\Gamma_s$ the sequence $(\Gamma(1+sn))_{n\geq0}$, where $\Gamma(\cdot)$ is the Euler gamma function.

Throughout the paper we assume that $m=(m(n))_{n\geq 0}$ (resp. $\tilde{m}=(\tilde{m}(n))_{n\geq 0}$) is a sequence of real positive numbers with $m(0)=1$ (resp. $\tilde{m}(0)=1$).

\section{$q$-calculus} \label{sec:3} 
In this section we introduce the basic notion of $q$-calculus following \cite{G-R}. In the whole paper we assume that  $q\in[0,1)$. 
If $u\in\EE[[t]]$ or $u\in\Oo(D,\EE)$ then we define the \emph{$q$-difference operator} $D_{q,t}$ as
\begin{equation*}
D_{q,t}u(t):=\frac{u(qt)-u(t)}{qt-t}.
\end{equation*}
For every $n\in\NN_{0}$ we define a $q$-analog of $n$ by 
\begin{equation*}
 [n]_q:=1+q+\dots+q^{n-1}=\frac{1-q^n}{1-q}.
\end{equation*}
We also introduce a $q$-analog of the factorial $n!$
\begin{equation*}
 [n]_q!:=
    \left\{
    \begin{array}{ll}
     1&\textrm{for}\ n=0\\  
     {[1]_q\cdots [n]_q}&\text{for}\ n\geq 1
     \end{array}
    \right..
\end{equation*}
 For every $n\in\NN_0\cup\{\infty\}$ and $a\in\CC$ we define $q$-shift factorial by
\begin{equation*}
 (a;q)_n:=
    \left\{
    \begin{array}{ll}
     1&\text{for}\ n=0\\
     \prod_{j=0}^{n-1}(1-aq^j)&\text{for}\ n\in\NN\\
     \prod_{j=0}^{\infty}(1-aq^j)&\text{for}\ n=\infty
     \end{array}
    \right..
\end{equation*}
Observe that the infinity product $(a;q)_{\infty}$ is convergent for any $a\in\CC$. We also set 
\begin{equation*}
 (a_1,\dots,a_r;q)_n:=\prod_{j=1}^r(a_j;q)_n\quad\text{for}\quad a_1,\dots,a_r\in\CC\quad\textrm{and}\quad n\in\NN_0\cup\{\infty\}.
\end{equation*}
The basic hypergeometric series is defined as
\begin{equation*}
 {}_{k+1}\phi_k\left(\begin{array}{c}
                       a_1,\dots,a_{k+1}\\
                       b_1,\dots,b_k
                     \end{array}
;q,x\right):=\sum_{n=0}^{\infty}\frac{(a_1,\dots,a_{k+1};q)_n}{(b_1,\dots,b_k;q)_n(q;q)_n}x^n.
\end{equation*}

In the paper we will use the following two fundamental formulas for the basic hypergeometric series
\begin{Prop}[$q$-binomial theorem, {\cite[Section 1.3]{G-R}}]
\label{prop:1}
If $|q|<1$ and $|z|<1$ then
\begin{equation*} 
 {}_{1}\phi_0\left(\begin{array}{c}
                       a\\
                       -
                     \end{array}
;q,z\right)=\sum_{n=0}^{\infty}\frac{(a;q)_n}{(q;q)_n}z^n=\frac{(az;q)_{\infty}}{(z;q)_{\infty}}.
\end{equation*}
\end{Prop}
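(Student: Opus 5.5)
The standard route is to prove the $q$-binomial theorem through a $q$-difference equation satisfied by the generating function. Fix $a\in\CC$ and $q\in[0,1)$, and set
\begin{equation*}
f(z):=\sum_{n=0}^{\infty}\frac{(a;q)_n}{(q;q)_n}z^n .
\end{equation*}
First I check that $f$ is holomorphic on $|z|<1$. The ratio of consecutive coefficients is $(1-aq^n)/(1-q^{n+1})$, which tends to $1$. So the radius of convergence is at least $1$; if $a=q^{-k}$ for some $k$, the series terminates.

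Next I derive a functional equation by comparing coefficients. Writing $c_n=(a;q)_n/(q;q)_n$, we have the recursion
\begin{equation*}
(1-q^{n+1})c_{n+1}=(1-aq^n)c_n .
\end{equation*}
Multiplying by $z^{n+1}$ and summing over $n\geq 0$, the left side gives $f(z)-f(qz)$. The right side gives $z f(z)-az f(qz)$. Hence
\begin{equation*}
f(z)=\frac{1-az}{1-z}\,f(qz),\qquad |z|<1 .
\end{equation*}

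I then iterate this relation $N$ times to get
\begin{equation*}
f(z)=\frac{(az;q)_N}{(z;q)_N}\,f(q^N z).
\end{equation*}
Now let $N\to\infty$. Because $q<1$, we have $q^Nz\to 0$, and $f$ is continuous at $0$ with $f(0)=c_0=1$. The finite products converge to $(az;q)_\infty$ and $(z;q)_\infty$. The limit $(z;q)_\infty$ is nonzero for $|z|<1$, since each factor $1-zq^j$ is nonzero and $\sum|z|q^j<\infty$. This gives $f(z)=(az;q)_\infty/(z;q)_\infty$.

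The degenerate case $q=0$ is consistent with this argument. There $(a;0)_n=1-a$ for $n\geq1$, and the formula reduces to $1+(1-a)z/(1-z)=(1-az)/(1-z)$, which is what the iteration yields directly.

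The only delicate point is justifying the limit $N\to\infty$, namely continuity of $f$ at $0$ and nonvanishing of $(z;q)_\infty$. Both are elementary, so I do not expect a real obstacle. Alternatively, one could simply cite \cite[Section 1.3]{G-R}, as the statement does.
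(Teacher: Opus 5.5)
Your proof is correct. The paper gives no argument of its own and only cites Gasper--Rahman, Section 1.3, where the proof (due to Heine) is essentially yours: the functional equation $f(z)=\frac{1-az}{1-z}f(qz)$ is iterated to $f(z)=\frac{(az;q)_N}{(z;q)_N}f(q^Nz)$ and then $N\to\infty$. The one difference is that they obtain the functional equation by eliminating an auxiliary series with parameter $aq$ from two difference relations, whereas you read it off directly from the coefficient recursion.

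One small mismatch: the statement allows complex $q$ with $|q|<1$, while you fixed $q\in[0,1)$ (the paper's standing convention). Your proof goes through verbatim once ``$q<1$'' is replaced by ``$|q|<1$''.
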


\begin{Prop}[Heine's transformation formula, {\cite[Section 1.4]{G-R}}]
\label{prop:2}
If $|b|<1$ and $|z|<1$ then 
\begin{equation*} 
 {}_{2}\phi_1\left(\begin{array}{c}
                       a,b\\
                       c
                     \end{array}
;q,z\right)=\sum_{n=0}^{\infty}\frac{(a,b;q)_n}{(c;q)_n(q;q)_n}z^n=\frac{(b,az;q)_{\infty}}{(c,z;q)_{\infty}}{}_{2}\phi_1\left(\begin{array}{c}
                       c/b,z\\
                       az
                     \end{array}
;q,b\right).
\end{equation*}
\end{Prop}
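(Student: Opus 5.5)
The plan is to derive Heine's formula from the $q$-binomial theorem (Proposition~\ref{prop:1}) by expanding a $q$-shifted factorial ratio and interchanging the order of summation. Write the coefficient
\begin{equation*}
\frac{(b;q)_n}{(c;q)_n}=\frac{(b;q)_\infty}{(c;q)_\infty}\cdot\frac{(cq^n;q)_\infty}{(bq^n;q)_\infty},
\end{equation*}
which holds whenever $c\notin\{q^{-k}\}$ so that the denominators are nonzero. Since $|b|<1$, we have $|bq^n|<1$, and Proposition~\ref{prop:1} with $a$ replaced by $c/b$ and $z$ replaced by $bq^n$ gives
\begin{equation*}
\frac{(cq^n;q)_\infty}{(bq^n;q)_\infty}=\sum_{k=0}^{\infty}\frac{(c/b;q)_k}{(q;q)_k}b^kq^{nk}.
\end{equation*}

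Substituting this into the ${}_2\phi_1$ series produces a double series:
\begin{equation*}
\frac{(b;q)_\infty}{(c;q)_\infty}\sum_{k=0}^\infty\frac{(c/b;q)_k}{(q;q)_k}b^k\sum_{n=0}^\infty\frac{(a;q)_n}{(q;q)_n}(zq^k)^n .
\end{equation*}
Because $|zq^k|<1$, Proposition~\ref{prop:1} applies again and evaluates the inner sum as $(azq^k;q)_\infty/(zq^k;q)_\infty$. Then use
\begin{equation*}
\frac{(azq^k;q)_\infty}{(zq^k;q)_\infty}=\frac{(az;q)_\infty}{(z;q)_\infty}\cdot\frac{(z;q)_k}{(az;q)_k}.
\end{equation*}
This yields exactly
\begin{equation*}
\frac{(b,az;q)_\infty}{(c,z;q)_\infty}\sum_{k}\frac{(c/b,z;q)_k}{(az;q)_k(q;q)_k}b^k,
\end{equation*}
which is the right-hand side of the formula.

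The main obstacle is justifying the interchange of the $n$- and $k$-summations. For this I would show that the double series converges absolutely. Bounding each term in absolute value gives $|(x;q)_j|\le(-|x|;q)_\infty$, and $1/|(q;q)_j|\le 1/(q;q)_\infty$ since $q\in[0,1)$. The absolute double sum is therefore dominated by a constant times $\sum_{n,k}|z|^n|b|^k$, which converges for $|z|,|b|<1$. Fubini's theorem for series then justifies the rearrangement.

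The excluded parameter values ($c$ or $az$ of the form $q^{-j}$) are handled by noting that both sides are meromorphic in the parameters, so the identity extends by continuity or is interpreted termwise. This step is routine.
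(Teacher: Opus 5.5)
Your argument is correct. It is the standard derivation in Gasper--Rahman, Section~1.4, which the paper cites rather than reproving: expand $(cq^n;q)_\infty/(bq^n;q)_\infty$ by the $q$-binomial theorem, interchange the sums (your bound via $(-|x|;q)_\infty$ and $1/(q;q)_\infty$ is valid for $q\in[0,1)$), and resum the inner series by the $q$-binomial theorem again. Two cosmetic points: the condition $c\notin\{q^{-j}\}$ is already needed for the left-hand side to be defined, so no continuity argument is required there, and $b\neq 0$ is implicit in the appearance of $c/b$.
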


\section{Summability and multisummability}\label{sec:4}
We introduce $k$-summable and multisummable power series. For more details about summability we refer the reader to \cite{B1,B2,LR}.

\begin{Def}[see {\cite[Section 5.2]{B2}}]
\label{df:borel}
 For a fixed sequence $m=(m(n))_{n\geq 0}$ of positive numbers, a linear operator $\hat{\Bo}_{m}\colon \EE[[t]]\to\EE[[t]]$ defined by
 \begin{equation*}
  (\hat{\Bo}_{m}\hat{u})(t):=
  \sum_{n=0}^{\infty}\frac{a_n}{m(n)}t^n\quad\text{for}\quad\hat{u}(t)=\sum_{n=0}^{\infty}a_nt^n\in\EE[[t]]
 \end{equation*}
 is called an \emph{$m$-Borel operator}.
\end{Def}
\begin{Rem}
Observe that for a given sequence $m=(m(n))_{n\geq 0}$ of positive numbers with $m(0)=1$, an inverse $m$-Borel operator $\hat{\Bo}_{m}^{-1}\colon\EE[[t]]\to\EE[[t]]$, called sometimes an \emph{$m$-Laplace operator}, is given by
 $\hat{\Bo}_{m}^{-1}= \hat{\Bo}_{m^{-1}}$ on $\EE[[t]]$, where $m^{-1}=(m(n)^{-1})_{n\geq 0}$. 
 Hence an $m$-Borel operator $\hat{\Bo}_{m}$ is a linear automorphism on the space of formal power series $\EE[[t]]$.
 \end{Rem}
\begin{Def}
\label{df:growth}
A function $f\in\Oo(\hat{S}_d(\alpha,r),\EE)$
is of \emph{exponential growth of order at most $k>0$ as $t\to\infty$
in $\hat{S}_d(\alpha,r)$} if for any
$\tilde{\alpha}\in(0,\alpha)$ and $\tilde{r}\in(0,r)$ there exist positive constants
$A,B$ such that
\begin{equation*}
\|f(t)\|_{\EE}<Ae^{B|t|^k}\quad \textrm{for every}\ t\in \hat{S}_d(\tilde{\alpha},\tilde{r}).
\end{equation*}
The set $\hat{S}_d(\tilde{\alpha},\tilde{r})$ is called a \emph{sub-disc-sector of $\hat{S}_d(\alpha,r)$}.

The space of such functions is denoted by $\Oo^k(\hat{S}_d(\alpha,r),\EE)$.
\end{Def}

\begin{Def}
We say that a function $f\in\Oo(\hat{S}_d,\EE)$
is of \emph{less than exponential growth as $t\to\infty$
in $\hat{S}_d$} if $f\in\Oo^k(\hat{S}_d,\EE)$ for every $k>0$.

We will denote the space of such functions by $\Oo^{>0}(\hat{S}_d,\EE)$.
\end{Def}

We recall the definitions of Laplace and Borel integral operators $\La_{\Gamma_{1/k},d}$ and $\Bo_{\Gamma_{1/k},d}$.
\begin{Def}[{see \cite[Chapter 2]{B1} and \cite[Chapter 5]{B2}}]\label{def:LB_operators}
  Fix $k>0$ and $d\in\RR$.
  \begin{itemize}
  \item If $v\in\Oo^{k}(\widehat{S}_d,\EE)$ then the integral operator $\La_{\Gamma_{1/k},d}$ defined by
  \begin{equation}\label{eq:lap}
   (\La_{\Gamma_{1/k},d}v)(t):=t^{-k}\int_{e^{id}\RR_+}v(s)e^{-(s/t)^k}\,d(s^k)=\int_{e^{id}\RR_+}v(s)e^{-(s/t)^k}k(s/t)^k\,\frac{ds}{s}
  \end{equation}
 is called the \emph{Laplace transform of order $k$ in a direction $d$}.
 \item If $u\in\Oo(S_d(\frac{\pi}{k}+\varepsilon,R),\EE)$ for some $\varepsilon>0$ and $R<\infty$ then the integral operator $\Bo_{\Gamma_{1/k},d}$ defined by
  \[
   (\Bo_{\Gamma_{1/k},d}u)(s):=\frac{1}{2\pi i}\int_{\gamma_k(d)}t^ku(t)e^{(s/t)^k}\,d(t^{-k})=-\frac{k}{2\pi i}\int_{\gamma_k(d)}u(t)e^{(s/t)^k}\,\frac{dt}{t}
  \]
(where a path $\gamma_k(d)$ is the boundary of a sector contained in $S_d(\frac{\pi}{k}+\varepsilon,R)$ with bisecting direction $d$,
a finite radius, an opening slightly larger than $\pi/k$, and the orientation is negative) is called the \emph{Borel transform of order $k$ in a direction $d$}.
 \end{itemize}
\end{Def}

\begin{Def}
\label{df:summable}
Let $k>0$ and $d\in\RR$. Then $\hat{u}\in\EE[[t]]$ is called
\emph{$k$-summable in a direction $d$} if there exists $\varepsilon>0$ and a disc-sector $\hat{S}_d=\hat{S}_d(\varepsilon)$ in a direction $d$ such that
$v:=\hat{\Bo}_{\Gamma_{1/k}}\hat{u}\in\Oo^k(\hat{S}_d,\EE)$.

Moreover, the \emph{$k$-sum of $\hat{u}$ in the direction $d$} is given by
\begin{equation}
\label{eq:sum}
       u^d(t)=\mathcal{S}_{k,d}\hat{u}(t):=
       (\La_{\Gamma_{1/k},\theta}v)(t)=t^{-k}\int_{e^{i\theta}\RR_+}e^{-(s/t)^k}v(s)\,d(s^k)
 \quad\textrm{for}\quad
       \theta\in(d-\varepsilon/2,d+\varepsilon/2).
\end{equation}

The space of $k$-summable formal power series in a direction $d$ is denoted by $\EE\{t\}_{k,d}$.
\end{Def}

\begin{Def}\label{df:summable_general}
 Let $k>0$. If $\hat{u}\in\EE[[t]]$ is $k$-summable in all directions $d$ but (after identification modulo $2\pi$) finitely many directions $d_1,\dots,d_n$ then $\hat{u}$ is called \emph{$k$-summable} and the directions $d_1,\dots,d_n$ are called the \emph{singular directions of $\hat{u}$}, provided they occur.
 
The space of $k$-summable formal power series is denoted by $\EE\{t\}_{k}$.
\end{Def}

\begin{Def}
\label{df:multisummable}
 Let $k_1>\dots>k_n>0$.
 We say that a real vector $\mathbf{d}=(d_1,\dots,d_n)\in\RR^n$ is an \emph{admissible multidirection with respect to} $\mathbf{k}=(k_1,\dots,k_n)$ if
 \begin{equation*}
 |d_{j}-d_{j-1}| \leq \frac{\pi}{2} \left(\frac{1}{k_{j}}-\frac{1}{k_{j-1}}\right)\qquad \text{for} \qquad j=2,\dots,n.
 \end{equation*}
\end{Def}

\begin{Def}
Let $\mathbf{k}$ and $\mathbf{d}$ be as in the previous definition. A series $\hat{u}\in\EE[[t]]$ is called \emph{$\mathbf{k}$-multisummable} in an admissible multidirection $\mathbf{d}$
if $\hat{u}=\hat{u}_1+\cdots+\hat{u}_n$ and $\hat{u}_j$ is $k_j$-summable in a direction $d_j$ for $j=1,\dots,n$. 
Then the \emph{$\mathbf{k}$-multisum of $\hat{u}$ in the multidirection $\mathbf{d}$} is given by 
$$u^{\mathbf{d}}(t)=\mathcal{S}_{\mathbf{k},\mathbf{d}}\hat{u}(t)=
\Ss_{k_1,d_1}\hat{u}_1(t)+\cdots+\Ss_{k_n,d_n}\hat{u}_n(t).
$$

Moreover, if additionally $\hat{u}_j$ is $k_j$-summable with $n_j$ singular directions $d_{j,1},\dots,d_{j,n_j}$ for $j=1,\dots,n$
then $\hat{u}$ is called \emph{$\mathbf{k}$-multisummable}
and $d_{j,1},\dots,d_{j,n_j}$ are called \emph{singular directions of $\hat{u}$ of level $k_j$}.

We denote the space of $\mathbf{k}$-multisummable series in an admissible multidirection $\mathbf{d}$ and the space of $\mathbf{k}$-multisummable series by $\EE\{t\}_{\mathbf{k},\mathbf{d}}$ and
$\EE\{t\}_{\mathbf{k}}$ respectively.
\end{Def}

\section{Sequences preserving summability}\label{sec:5}
We recall the concept of sequences preserving summability, introduced recently in \cite{I-M} and developed in \cite{M-S-T}. 

\begin{Def}[{\cite[Definition 11]{I-M}}]
\label{df:preserve}
  We say that a sequence $m=(m(n))_{n\geq 0}$ \emph{preserves summability} if for every $k>0$, $d\in\RR$ and every $\hat{u}\in\EE[[t]]$ the following equivalence holds:
  \begin{equation}\label{eq:equiv}
   \hat{u}\in\EE\{t\}_{k,d}\quad\text{if and only if}\quad
   \hat{\Bo}_{m}\hat{u}\in\EE\{t\}_{k,d}.
  \end{equation}
\end{Def}

\begin{Ex}\label{ex:1}
The following sequences $m=(m(n))_{n\geq0}$ preserve summability.
\begin{itemize}
 \item (See {\cite[Example 2]{I-M}}).\\
 If $a > 0$ and $\mathbf{a} := (a^n)_{n\geq 0}$ then the sequence $\mathbf{a}$ preserves summability. Indeed, in this case
 if $\hat{u}(t)=\sum_{n=0}^{\infty}u_nt^n$ then 
 $$\hat{\Bo}_{\mathbf{a}}\hat{u}(t)=\sum_{n=0}^{\infty}\frac{u_n}{a^n}t^n=\hat{u}\Big(\frac{t}{a}\Big)
 $$
 and since $a>0$ we conclude that the equivalence (\ref{eq:equiv}) is satisfied.
 In particular
the sequence $\mathbf{1} = (1)_{n\geq 0}$ preserves summability in a trivial way.
 \item (See {\cite[Example 2]{I-M}}).\\  Let $m(n)=\frac{\mathfrak{m}_1(n)}{\mathfrak{m}_2(n)}$ for $n\in\NN_0$, where $\mathfrak{m}_1(u)$ and $\mathfrak{m}_2(u)$ are sequences of moments in Balser's sense (see \cite[page 86]{B2}) of the same positive order.
 By Balser's theory of moment summability \cite[Section~6.5 and Theorem~38]{B2}, the sequence $(m(n))_{n\geq 0}$ preserves summability.
 
In particular w can take as such sequence
\begin{equation*}
m(n):=\frac{\Gamma(1+a_1 n)\cdots\Gamma(1+a_k n)}{\Gamma(1+b_1 n)\cdots \Gamma(1+b_l n)}\quad \text{for}\quad n\in\NN_0,
\end{equation*}
where $a_1,\dots,a_k$ and $b_1,\dots b_l$ are positive numbers satisfying
\begin{equation*}
 a_1+\cdots+a_k=b_1+\cdots+b_l,
\end{equation*}
and $\Gamma(\cdot)$ denotes the gamma function.
\item (See {\cite[Theorem 2]{I-M}}).\\
If $q\in[0,1)$ then the sequence $m=([n]_q!)_{n\geq 0}$ preserves summability.
\item (See {\cite[Theorem 10]{M-S-T}}).\\ Let
\begin{equation*}
 m(n)=w_1(n)a_1^n+\dots+w_k(n)a_k^n\quad\text{for}\quad n\in\NN_0,
\end{equation*}
where $a_1,\dots,a_k>0$ and $w_1,\dots,w_k$ are polynomials satisfying $w_1(n),\dots,w_k(n)\geq 0$ for $n\in\NN_0$ and $w_1(0)+\dots+w_k(0)=1$.
Then the sequence $m=(m(n))_{n\geq 0}$ preserves summability.
\end{itemize}
\end{Ex}

In the next theorem we collect known properties of sequences preserving summability.
\begin{Thm}\label{th:1}
 Sequences preserving summability have the following properties:
 \begin{enumerate}
  \item[(a)] (See \cite[Remark 9]{I-M}). If $m=m(n)_{n\geq0}$ preserves summability then there exists $a,A>0$ such that
  \begin{equation}
  \label{eq:aA}
   a^n\leq m(n) \leq A^n\quad\text{for every}\quad n\in\NN_0.
  \end{equation}
 \item[(b)] (See \cite[Remark 4]{M-S-T}). The set of sequences preserving summability forms a group with multiplication. If $m_1=(m_1(n))_{n\geq0}$ and $m_2=(m_2(n))_{n\geq 0}$ preserve summability then also their product $m=m_1\cdot m_2=(m_1(n) m_2(n))_{n\geq 0}$ preserves summability. Observe also that the identity element $\mathbf{1}=(1)_{n\geq 0}$ and the inverse element $m^{-1}=(m(n)^{-1})_{n\geq 0}$ to $m=(m(n))_{n\geq 0}$ preserve summability.
 \item[(c)] (Open set property, see \cite[Theorem 3]{M-S-T}.)
 Assume that $m=(m(n))_{n\geq0}$ is a sequence preserving summability and $\tilde{m}=(\tilde{m}(n))_{n\geq0}$ is a sequence of positive numbers with $m(0)=1$, which is close to $m$ in the sense that for every $k>0$ there exist $A_k,B_k>0$ satisfying
  \begin{equation}
  \label{eq:eq_2}
   \Big|m(n)-\tilde{m}(n)\Big|\leq\frac{A_kB_k^n}{\Gamma(1+\frac{n}{k})}\quad\text{for every}\quad n\in\NN_0.
  \end{equation}
 Then $\tilde{m}$ is also a sequence preserving summability.
 \end{enumerate}
\end{Thm}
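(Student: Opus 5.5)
We treat the three parts of Theorem~\ref{th:1} separately, since each is cited from earlier work. Throughout we use two facts. First, summability is invariant under rescaling $t\mapsto t/a$ with $a>0$. Second, if $\hat u\in\EE\{t\}_{k,d}$ then its coefficients satisfy Gevrey bounds $\|u_n\|\le CB^n\Gamma(1+n/k)$, and such a series converges exactly when the $\Gamma$-factor can be removed.

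\textbf{(a).} We test the equivalence (\ref{eq:equiv}) on geometric and Gevrey-type series. Suppose the upper bound in (\ref{eq:aA}) fails, so $\limsup m(n)^{1/n}=\infty$. Choose $\hat u(t)=\sum m(n)t^n$ along a subsequence, or a suitably lacunary variant, adapted to the $k$ at hand. Then $\hat\Bo_m\hat u=\sum t^n$ (on the subsequence) converges, hence is $k$-summable in every direction. But $\hat u$ has zero radius of convergence and super-Gevrey growth for every order $k$, so it cannot be $k$-summable. This contradicts (\ref{eq:equiv}). The lower bound follows the same way, applied to $m^{-1}$ and the inverse operator. The step needing care is arranging a series that fails summability for \emph{all} $k$ simultaneously. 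Lacunarity does this: a lacunary series with zero radius of convergence has $\mathbb{R}_+e^{id}$ as a natural boundary of its Borel transform in every direction.

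\textbf{(b).} This part is formal. We have $\hat\Bo_{m_1m_2}=\hat\Bo_{m_1}\hat\Bo_{m_2}$, and the two equivalences compose. For the inverse, $\hat\Bo_{m^{-1}}=\hat\Bo_m^{-1}$, so applying (\ref{eq:equiv}) to $\hat v=\hat\Bo_m^{-1}\hat u$ gives the equivalence for $m^{-1}$. The identity $\mathbf 1$ preserves summability trivially.

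\textbf{(c).} Write $\tilde m=m\cdot(\tilde m/m)$. By (b) it suffices to show that $\hat\Bo_{\tilde m}\hat u-\hat\Bo_m\hat u$ is $k$-summable in direction $d$ whenever $\hat u$ is; a symmetric argument then gives the converse direction. The coefficients of this difference are
\[
u_n\Bigl(\frac1{\tilde m(n)}-\frac1{m(n)}\Bigr)=u_n\,\frac{m(n)-\tilde m(n)}{m(n)\tilde m(n)}.
\]
Apply (\ref{eq:eq_2}) with $k$ replaced by some $k'<k$ chosen so that $\frac1{k'}>\frac1k$. Using (a) for $m$, and the consequence of (\ref{eq:eq_2}) that $\tilde m(n)\ge a^n/2$ for large $n$, this coefficient is bounded by
\[
C\tilde B^n\,\Gamma(1+n/k)\,/\,\Gamma(1+n/k').
\]
That bound decays superexponentially, so the difference is an entire convergent series. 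A convergent series is $k$-summable in every direction, so $\hat\Bo_{\tilde m}\hat u\in\EE\{t\}_{k,d}$.

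The main obstacle is the lower bound for $\tilde m(n)$ needed in the denominator. We handle it as follows. Closeness gives $|\tilde m(n)-m(n)|\le A_kB_k^n/\Gamma(1+n/k)$, which is eventually much smaller than $a^n\le m(n)$. Hence $\tilde m(n)\ge m(n)/2$ for large $n$. The finitely many remaining indices only change constants, since $\tilde m(n)>0$ for all $n$.
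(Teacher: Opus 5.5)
\textbf{There is a genuine gap in part (a).} The paper itself proves nothing here; all three parts are quoted from the cited works. Your decisive step is the claim that $\hat u=\sum m(n)t^n$, or a lacunary subseries, ``has super-Gevrey growth for every order $k$, so it cannot be $k$-summable.'' This does not follow from $\limsup m(n)^{1/n}=\infty$.

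Your contradiction hypothesis allows sequences such as $m(n)=(\log(n+e))^n$. Such a sequence is super-exponential but Gevrey of every order. For every $k$, the Borel transform $\hat{\Bo}_{\Gamma_{1/k}}\hat u$ is then entire, and so is that of any lacunary subseries. So there is no super-Gevrey growth and no natural boundary. The Hadamard gap argument only applies when that Borel transform has finite positive radius. For such $m$ your argument reaches no contradiction.

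Zero radius of convergence alone does not block $k$-summability in a given direction either. The Euler series $\sum n!\,t^n$ is $1$-summable in every direction $d\not\equiv 0$. Finally, your concern about failing ``for all $k$ simultaneously'' rests on a misreading. The equivalence (\ref{eq:equiv}) must hold for every pair $(k,d)$, so a single failing pair suffices.

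\textbf{The fix.} The ingredient you are missing is standard: a series that is $k$-summable in \emph{every} direction, for one fixed $k$, converges. Here is why.
\begin{itemize}
\item Its Borel transform $v=\hat{\Bo}_{\Gamma_{1/k}}\hat u$ continues along all rays.
\item Finitely many of the corresponding sectors cover $\CC\setminus\{0\}$, and the continuations agree on overlaps.
\item So $v$ is entire with $|v(s)|\le Ce^{B|s|^k}$.
\item Cauchy estimates then give $|u_n|\le C'B'^n$.
\end{itemize}

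With this fact you need neither subsequences nor lacunarity.
\begin{itemize}
\item Upper bound: $\hat{\Bo}_m\big(\sum m(n)t^n\big)=\sum t^n$ is $k$-summable in every direction, so by (\ref{eq:equiv}) $\sum m(n)t^n$ is too. Hence it converges, which gives $m(n)\le A^n$.
\item Lower bound: apply the same reasoning to $\sum t^n$, whose image under $\hat{\Bo}_m$ is $E_m$. Alternatively, use $m^{-1}$ via (b). This gives $m(n)\ge a^n$.
\item Normalise the constants using $m(0)=1$.
\end{itemize}
Equivalently, (a) can be read off from Theorem \ref{th:2}, since $E_m$ and $e_m$ must be holomorphic at the origin.

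\textbf{Parts (b) and (c).} Your (b) is correct. Your (c) is correct and works directly from Definition \ref{df:preserve}: at a level $k'<k$, the closeness estimate beats the Gevrey bound of order $1/k$, so the difference series converges. In the converse direction no lower bound on $\tilde m$ is needed, because the coefficient difference there is $w_n(\tilde m(n)-m(n))$. One small point: the reduction uses linearity of $\EE\{t\}_{k,d}$, not the factorisation $\tilde m=m\cdot(\tilde m/m)$. Part (b) is used only to know that $m^{-1}$ preserves summability.
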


The crucial role in the study of sequences preserving summability plays its following characterization
\begin{Thm}[{\cite[Theorem 1]{I-M}}]
 \label{th:2}
  A sequence $m=(m(n))_{n\geq 0}$ preserves summability if and only if
  \begin{equation}
  \label{eq:eq_1.5}
  \hat{\Bo}_{m}\big(\sum_{n=0}^{\infty}t^n\big)\in\Oo^{>0}(\CC\setminus\RR_+)\quad \textrm{and}\quad \hat{\Bo}_{m^{-1}}\big(\sum_{n=0}^{\infty}t^n\big)\in\Oo^{>0}(\CC\setminus\RR_+).
  \end{equation}
\end{Thm}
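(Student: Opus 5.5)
Since $\hat{\Bo}_m(\sum_n t^n)=\sum_n t^n/m(n)$, condition (\ref{eq:eq_1.5}) says that the generating functions $f_m(z):=\sum_{n\ge0}z^n/m(n)$ and $f_{m^{-1}}(z):=\sum_{n\ge0}m(n)z^n$ continue holomorphically to $\CC\setminus\RR_+$ with less than exponential growth at infinity in every closed subsector. I would prove the two implications separately.

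\emph{Necessity.} Assume $m$ preserves summability. The geometric series $\hat u(t)=\sum_n t^n$ is convergent, so it lies in $\EE\{t\}_{k,d}$ for every $k>0$ and every direction $d$. By (\ref{eq:equiv}), $\hat{\Bo}_m\hat u=f_m$ is then $k$-summable in every direction $d\not\equiv0$ and for every $k$, and the same holds for $f_{m^{-1}}$ because $m^{-1}$ also preserves summability (the two conditions in (\ref{eq:equiv}) are symmetric). I would first use the bound $a^n\le m(n)\le A^n$ of Theorem~\ref{th:1}(a), or derive it directly by taking $k$ large, to see that $f_m$ has positive radius of convergence; for a convergent series, summability is automatic. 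To extract the needed information I would instead apply (\ref{eq:equiv}) to the family $\hat u_\lambda(t)=\sum_n \Gamma(1+n/k)\,(t/\lambda)^n$, which is $k$-summable in every direction $d\neq\arg\lambda$. Then $\hat{\Bo}_m\hat u_\lambda$ is $k$-summable off $\arg\lambda$, so $\hat{\Bo}_{\Gamma_{1/k}}\hat{\Bo}_m\hat u_\lambda(s)=f_m(s/\lambda)$ continues along every ray $\arg s=d\ne\arg\lambda$ with exponential growth of order $k$. Varying $\lambda$ gives analytic continuation of $f_m$ to $\CC\setminus\RR_+$ with growth $O(e^{B|z|^k})$ for every $k>0$, locally uniformly in sectors; a compactness argument over directions gives uniform constants on closed subsectors. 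This is exactly membership in $\Oo^{>0}(\CC\setminus\RR_+)$.

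\emph{Sufficiency.} Assume (\ref{eq:eq_1.5}). Let $\hat u$ be $k$-summable in direction $d$ and put $v=\hat{\Bo}_{\Gamma_{1/k}}\hat u\in\Oo^k(\hat S_d)$. I would write $\hat{\Bo}_{\Gamma_{1/k}}\hat{\Bo}_m\hat u=\hat{\Bo}_m v$ and represent the coefficientwise product $\sum v_n s^n/m(n)$ as a Hadamard product via the integral
\[
(\hat{\Bo}_m v)(s)=\frac1{2\pi i}\oint v(w)\,f_m\!\left(\frac sw\right)\frac{dw}{w},
\]
first on a small circle. Then I deform the contour to a path hugging the ray through $s$ inside $\hat S_d$, going out to infinity. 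The singularities of $f_m(s/w)$ occur only where $s/w\in\RR_+$, which allows continuation of $\hat{\Bo}_m v$ to a slightly smaller disc-sector around $d$. Combining the order-$k$ growth of $v$ with the subexponential growth of $f_m$ should yield order-$k$ growth of the result, so $\hat{\Bo}_m\hat u\in\EE\{t\}_{k,d}$. The converse implication follows by the same argument with $m^{-1}$.

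\emph{Main obstacle.} The hard part is the contour deformation in the sufficiency direction: the integrand has a singular ray, and one must split the path into a bounded Hankel-type loop around $\RR_+$-preimages plus two rays, then estimate each piece. On the rays I would bound $|f_m(s/w)|\le C_\varepsilon e^{\varepsilon|s/w|^{k}}$ and use $|v(w)|\le Ae^{B|w|^k}$, choosing the contour radius $|w|\sim|s|$ so that the exponentials combine to $e^{B'|s|^k}$. Making this estimate uniform is where I expect the most difficulty.
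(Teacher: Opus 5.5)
Your necessity argument is sound. Testing (\ref{eq:equiv}) on $\hat u_1(t)=\sum_n\Gamma(1+n/k)t^n$, whose $\Gamma_{1/k}$-Borel transform is $1/(1-s)$, shows that $f_m=\hat{\Bo}_{\Gamma_{1/k}}\hat{\Bo}_m\hat u_1$ has a positive radius of convergence $a$ and lies in $\Oo^k(\hat S_d(\varepsilon_{d,k}))$ for every $d\not\equiv 0$. The continuations glue because these disc-sectors are star-shaped and contain the disc of convergence, and compactness gives the bounds on closed subsectors. One $\lambda$ suffices, and Theorem~\ref{th:1}(a) is not needed.

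The paper only quotes this theorem. Your sufficiency plan is the mechanism of Propositions~\ref{prop:integral} and~\ref{prop:5}: the Hadamard-product representation (\ref{eq:integral_1}) with kernel $E_m=f_m$, followed by a contour deformation. That deformation, which you leave open, is where the proposal has a real gap.

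You treat the whole ray through $s$ as singular for $w\mapsto f_m(s/w)$. In fact, since $f_m$ is holomorphic on $D_a\cup(\CC\setminus\RR_+)$, the integrand $v(w)f_m(s/w)/w$ is holomorphic on $\hat S_d$ minus the segment $\{\rho s\colon 0\le\rho\le 1/a\}$. Missing this, you let the contour run to infinity, and then the ray integrals diverge in general: $f_m(s/w)\to f_m(0)=1$ as $w\to\infty$, while $v$ may grow like $e^{B|w|^k}$ (take $v(w)=e^{w}$, $k=1$, $d=0$). A path hugging the ray through $s$ fails for a second reason. There $s/w$ approaches $\RR_+$, where (\ref{eq:eq_1.5}) gives no uniform bound and $f_m$ may even have poles (e.g.\ for $m=([n]_q!)_{n\geq0}$).

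The missing step is to close the contour at a finite radius, as the paper does in (\ref{eq:deform}). For $s\in S_d(\tilde\alpha)$, integrate over $\partial\big(\hat S_d(\alpha',r')\cap D_R\big)$ with $\tilde\alpha<\alpha'<\alpha$, $r'<r$ and $R=2|s|/a$. This cycle winds once around the segment and avoids it, so it may replace the small circle. The estimates on it are as follows.
\begin{itemize}
\item On the outer arc, $|s/w|=a/2$, so $f_m(s/w)$ is bounded.
\item On the truncated boundary rays and on the arc $|w|=r'$, one has $|w|\ge r'$ and $|s/w|\le |s|/r'$, and $s/w$ stays in a fixed closed subsector of $\CC\setminus\RR_+$. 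Hence $|f_m(s/w)|\le Ce^{D(|s|/r')^k}$.
\item Everywhere on the cycle, $|v(w)|\le Ae^{BR^k}$.
\end{itemize}
Since $\oint|dw|/|w|=O(1+|s|)$ along this cycle, you get $|\hat{\Bo}_m v(s)|\le C'(1+|s|)e^{B'|s|^k}$, which is the required order-$k$ growth.

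The radius should be a fixed multiple larger than $1$ of $|s|/a$. With the paper's choice $|t|/a+1$ in $\gamma_{t,d}$, the point $t/\zeta$ on the outer arc approaches the circle $|z|=a$, where $E_m$ need not be bounded.
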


\begin{Rem}
 The idea of sequences preserving summability is similar in spirit to the concept of summability factor for $k$-summability introduced by Balser \cite[Exercise on page 109]{B2}. Namely, a sequence $\lambda=(\lambda(n))_{n\geq 0}$ is called a \emph{summability factor} for $k$-summability, if for  every $\hat{u}\in\EE\{t\}_k$, we have $\hat{\Bo}_{\lambda^{-1}}\hat{u}\in\EE\{t\}_k$. Observe that if $m$ is a sequence preserving summability then $m$ and $m^{-1}$ are summability factors for $k$-summability for every $k>0$.
\end{Rem}

\section{Kernel functions associated with sequences preserving summability}\label{sec:6}
Theorem \ref{th:2} suggests to consider the following kernel functions.
\begin{Def}\label{df:kernels}
Let $m=(m(n))_{n\geq0}$ be a sequence preserving summability. We put
\begin{equation}\label{eq:kernels}
 E_m(z):=\sum_{n=0}^{\infty}\frac{z^n}{m(n)}\quad\text{and}\quad e_m(z):=\sum_{n=0}^{\infty}m(n)z^n.
\end{equation}
Then $E_m(z)$ and $e_m(z)$ are called \emph{kernel functions associated with a sequence $m$}.
\end{Def}

\begin{Rem}
 The kernel functions $E_m(z)$ and $e_m(z)$ are similar in spirit to kernel functions $E(z)$ and $e(z)$ associated with a given sequence of moments $\tilde{m}$ of order $1/k>0$ and they are used in Balser's theory of moment summability (see \cite[Section 5.5]{B2}). In particular $E(z)$ satisfies the same formula as $E_m(z)$ in (\ref{eq:kernels}), if we replace the sequence preserving summability $m$ by the sequence of moments $\tilde{m}$. Nevertheless, these kernels have completely different properties.
 For example $E(z)\in\Oo^k(\CC)$ is an entire function  and $E_m(z)\in\Oo^{>0}(\CC\setminus\RR_+)$ has a singular point on $\RR_+$.
\end{Rem}

In the next proposition we collect properties of kernel functions $E_m(z)$ and $e_m(z)$ follow directly from their definitions.
\begin{Prop}\label{prop:3}
 Kernel functions $E_m(z)$ and $e_m(z)$ associated with a sequence preserving summability $m$ have the following properties.
 \begin{enumerate}
 \item[(a)] $E_m(0)=1$ and $e_m(0)=1$.
  \item[(b)] $E_m(z),e_m(z)\in\Oo(D)$. More precisely $E_m(z)\in\Oo(D_a)$ and $e_m(z)\in\Oo(D_{1/A})$, where $a,A>0$ are given by (\ref{eq:aA}).
  \item[(c)] $E_m(z),e_m(z)\in\Oo^{>0}(\CC\setminus\RR_+)$.
  \item[(d)] $E_m(z)$ and $e_m(z)$ have singular points on $\RR_+$.
\end{enumerate}
\end{Prop}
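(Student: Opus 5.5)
Parts (a) and (b) come straight from Definition \ref{df:kernels} and Theorem \ref{th:1}(a); (c) is Theorem \ref{th:2} applied to $m$ and to $m^{-1}$, using the group property of Theorem \ref{th:1}(b); and (d) follows from Pringsheim's theorem for power series with positive coefficients.

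For (a), evaluate the series at $z=0$ and use $m(0)=1$.

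For (b), use the bounds $a^n\leq m(n)\leq A^n$ from (\ref{eq:aA}). They give $|z^n/m(n)|\leq (|z|/a)^n$, so $E_m$ converges for $|z|<a$. Likewise $|m(n)z^n|\leq (A|z|)^n$, so $e_m$ converges for $|z|<1/A$. Hence $E_m\in\Oo(D_a)$ and $e_m\in\Oo(D_{1/A})$.

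For (c), note that $E_m(z)=\hat{\Bo}_m(\sum_n z^n)$ and $e_m(z)=\hat{\Bo}_{m^{-1}}(\sum_n z^n)$. Theorem \ref{th:2} then says that both functions extend analytically to $\CC\setminus\RR_+$. It also says they are of less than exponential growth at infinity there; this is understood in the disc-sector sense of the definitions, on every sector $\hat{S}_d$ with $d\not\equiv 0$ modulo $2\pi$.

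For (d), argue by contradiction. Suppose $E_m$ had no singular point on $\RR_+$. It is holomorphic on $\CC\setminus\RR_+$, and near the origin it is given by its power series. So it would extend holomorphically to all of $\CC$, since singularities can only lie on $\RR_+$ and we are assuming there are none. Its Taylor series at $0$ would then have infinite radius of convergence, which means $\limsup_n m(n)^{-1/n}=0$. This contradicts $m(n)\geq a^n$.

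Two subtleties remain.
\begin{itemize}
\item \textbf{Extensions from above and below.} The continuations of $E_m$ from the upper and lower half planes might differ across $\RR_+$. The clean way around this is Pringsheim's theorem: a power series with nonnegative coefficients and finite radius of convergence $\rho$ has a singularity at $z=\rho$. The radius is finite because $E_m$ has coefficients $1/m(n)\geq A^{-n}$, giving $\rho\leq A$. Hence $\rho\in\RR_+$ is a singular point of $E_m$.
\item \textbf{The kernel $e_m$.} The same argument works, with coefficients $m(n)\geq a^n$ giving radius of convergence at most $1/a$.
\end{itemize}

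The main (mild) obstacle is making sense of ``singular point on $\RR_+$'' given that the domain of holomorphy is $\CC\setminus\RR_+$. Pringsheim's theorem settles it: it locates the singularity exactly at the positive real point $z=\rho$ on the circle of convergence.
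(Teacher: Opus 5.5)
Your proof is correct. Parts (a)--(c) follow the paper exactly: they come from the definition, from the bounds $a^n\le m(n)\le A^n$, and from Theorem~\ref{th:2}. The appeal to the group property in (c) is unnecessary, because Theorem~\ref{th:2} already contains the condition for $m^{-1}$.

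For (d), the paper uses only your first argument. If a kernel had no singular point on $\RR_+$, then by (c) it would be entire, which contradicts (\ref{eq:aA}). The ``subtlety'' you raise is not a gap in that argument. A point of $\RR_+$ where the continuations from above and below disagree is, by definition, a singular point. If every point of $\RR_+$ is regular, any two local extensions coincide off $\RR_+$, hence on their common disc, so they glue with the kernel to an entire function.

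Your Pringsheim argument is still a genuinely different and sharper route. It places a singularity exactly at the radius of convergence $\rho$, with $\rho\in[a,A]$ for $E_m$ and $\rho\in[1/A,1/a]$ for $e_m$. It also does not use (c) at all, only positivity of the coefficients and (\ref{eq:aA}). The paper's route is more elementary, needing only Cauchy--Hadamard, but it says nothing about where the singularity is.

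One slip in your first argument: $\limsup_n m(n)^{-1/n}=0$ contradicts the upper bound $m(n)\le A^n$, which gives $m(n)^{-1/n}\ge 1/A$. It does not contradict $m(n)\ge a^n$. You use the correct bound in the Pringsheim bullet.
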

\begin{proof}
Properties (a)--(c) follow directly from Definition \ref{df:kernels}, inequality (\ref{eq:aA}) and Theorem \ref{th:2}. To prove (d) observe that if $E_m(z)$ or $e_m(z)$ has no singular points on $\RR_+$ then by (c) this kernel is an entire function, which contradicts (\ref{eq:aA}). 
\end{proof}

\begin{Ex}\label{ex:2}
 Below you find a few examples of kernel functions.
 \begin{enumerate}
  \item If $m={\bf a}=(a^n)_{n\geq 0}$ for some $a>0$ then
  $$
  E_m(z)=\frac{a}{a-z}\qquad\text{and}\qquad e_m(z)=\frac{1}{1-az}.
  $$
  In particular $E_m(z)\in\Oo(D_a\cup(\CC\setminus\RR_+))$ and $e_m(z)\in\Oo(D_{1/a}\cup(\CC\setminus\RR_+))$. Moreover, for every $\varepsilon>0$ both kernels $E_m(z)$ and $e_m(z)$ are bounded in $\CC\setminus S(0,\varepsilon)$.
  \item Let $m=([n]_q!)_{n\geq 0}$ for some $q\in[0,1)$. Then by properties of $q$-exponential function $\exp_q(z)$ and by Proposition \ref{prop:1} ($q$-binomial theorem) we get
  \begin{multline*}
  E_m(z)=\exp_q(z)=\sum_{n=0}^{\infty}\frac{z^n}{[n]_q!}=\sum_{n=0}^{\infty}\frac{(1-q)^n}{(q;q)_n}z^n=
 {}_{1}\phi_0\left(\begin{array}{c}
                       0\\
                       -
                     \end{array}
;q,(1-q)z\right)=\prod_{n=0}^{\infty}\frac{1}{1-(1-q)q^nz}.
  \end{multline*}
Hence $E_m(z)$ is a meromorphic function on $\CC$ with simple poles situated on the positive real halfline at the points 
\begin{equation*}
z=z_n=\frac{q^{-n}}{1-q}\qquad\text{for}\qquad n\in\NN_0.
\end{equation*}
In particular $E_m(z)$ is holomorphic on $D_{1/(1-q)}\cup(\CC\setminus\RR_+)$ and for every $\varepsilon>0$ the function $E_m(z)$ is bounded in $\CC\setminus S(0,\varepsilon)$.

Analogously, by Proposition \ref{prop:2} (Heine's transformation formula) we get (see also the proof of \cite[Theorem 4]{L-M})
  \begin{multline*}
e_m(z)=\sum_{n=0}^{\infty}[n]_q! z^n= \sum_{n=0}^{\infty}\frac{(q;q)_n}{(1-q)^n}z^n={}_{2}\phi_1\left(\begin{array}{c}
                       q,q\\
                       0
                     \end{array}
;q,\frac{z}{1-q}\right)\\
=\frac{\big(q,q\frac{z}{1-q};q\big)_{\infty}}{\big(\frac{z}{1-q};q\big)_{\infty}}
 \sum_{n=0}^{\infty}\frac{\big(\frac{z}{1-q};q\big)_n}{\big(q\frac{z}{1-q},q;q\big)_n}q^n=\sum_{n=0}^{\infty}\frac{1}{1-\frac{z}{1-q}q^n}\frac{(q;q)_{\infty}}{(q;q)_n}q^n.
  \end{multline*}
 \end{enumerate}
 Observe that $(q;q)_{\infty}<(q;q)_n$, $\sum_{n=0}^{\infty}q^n=\frac{1}{1-q}<\infty$ and for every $\varepsilon>0$ there exists $M<\infty$ such that
 \begin{equation*}
  \left|\frac{1}{1-\frac{z}{1-q}q^n}\right|<M\quad\text{for every}\quad n\in\NN_0\quad\text{and}\quad z\in\CC\setminus S(0,\varepsilon).
 \end{equation*}
Hence we conclude that the function $e_m(z)$ is holomorphic on $D_{(1-q)}\cup(\CC\setminus\RR_+)$ and bounded on $\CC\setminus S(0,\varepsilon)$. Moreover, $e_m(z)$ is a meromorphic function on $\CC$  with simple poles at $z=z_n=(1-q)q^{-n}$ for $n\in\NN_0$.
\end{Ex}

Kernel functions $E_m(z)$ and $e_m(z)$ allow us to get integral representations of $m$-Borel transforms of holomorphic functions.
\begin{Prop}
\label{prop:integral}
  If $u(t)=\sum_{n=0}^{\infty}u_nt^n\in\Oo(D_r,\EE)$ for some $r>0$ and $m$ satisfies (\ref{eq:aA})  then $\hat{\Bo}_mu(t)\in\Oo(D_{ar},\EE)$ and  $\hat{\Bo}_{m^{-1}}u(t)\in\Oo(D_{r/A},\EE)$. Moreover, if additionally $m$ is a sequence preserving summability then for every $\varepsilon\in(0,r)$
  these Borel transforms have integral representations
 \begin{equation}
 \label{eq:integral_1}
  \hat{\Bo}_mu(t)=\frac{1}{2\pi i}\oint_{|\zeta|=\varepsilon}\frac{u(\zeta)}{\zeta}E_m\left(\frac{t}{\zeta}\right)\,d\zeta=:\Bo_mu(t)\quad\text{for}\quad t\in D_{a\varepsilon}
 \end{equation}
and 
\begin{equation}
\label{eq:integral_2}
  \hat{\Bo}_{m^{-1}}u(t)=\frac{1}{2\pi i}\oint_{|\zeta|=\varepsilon}\frac{u(\zeta)}{\zeta}e_m\left(\frac{t}{\zeta}\right)\,d\zeta=:\Bo_{m^{-1}}u(t)\quad\text{for}\quad t\in D_{\varepsilon/A}.
 \end{equation}
\end{Prop}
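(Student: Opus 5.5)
The plan has two parts: first the holomorphy of the formal Borel transforms, which uses only (\ref{eq:aA}), and then the integral representations, obtained by expanding the kernels into their series and integrating term by term.

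\emph{Holomorphy.} Take $u(t)=\sum u_nt^n\in\Oo(D_r,\EE)$. Cauchy's estimates give, for every $\rho<r$, a constant $C_\rho$ with $\|u_n\|_{\EE}\le C_\rho\rho^{-n}$. Combined with (\ref{eq:aA}) this yields
\[
\Big\|\frac{u_n}{m(n)}\Big\|_{\EE}\le C_\rho (a\rho)^{-n},\qquad \|u_n m(n)\|_{\EE}\le C_\rho (\rho/A)^{-n}.
\]
Hence $\hat{\Bo}_mu$ converges on $D_{a\rho}$ and $\hat{\Bo}_{m^{-1}}u$ converges on $D_{\rho/A}$. Letting $\rho\to r$ gives the first claim.

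\emph{Integral representation (\ref{eq:integral_1}).} Assume now that $m$ preserves summability. Fix $\varepsilon\in(0,r)$ and $t\in D_{a\varepsilon}$. On the circle $|\zeta|=\varepsilon$ we have $|t/\zeta|=|t|/\varepsilon<a$. By Proposition \ref{prop:3}(b), the series $E_m(t/\zeta)=\sum_n (t/\zeta)^n/m(n)$ therefore converges, and it does so uniformly on the circle: its terms are bounded by $(|t|/(a\varepsilon))^n$ thanks to (\ref{eq:aA}). Since $u$ is bounded on the circle, we may interchange sum and integral:
\[
\frac{1}{2\pi i}\oint_{|\zeta|=\varepsilon}\frac{u(\zeta)}{\zeta}E_m\Big(\frac t\zeta\Big)d\zeta=\sum_{n=0}^\infty \frac{t^n}{m(n)}\,\frac{1}{2\pi i}\oint_{|\zeta|=\varepsilon}\frac{u(\zeta)}{\zeta^{n+1}}\,d\zeta=\sum_{n=0}^\infty\frac{u_n}{m(n)}t^n,
\]
where the last equality is Cauchy's coefficient formula. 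This sum is exactly $\hat{\Bo}_mu(t)$.

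\emph{Integral representation (\ref{eq:integral_2}).} The argument is identical, with $e_m$ in place of $E_m$. For $t\in D_{\varepsilon/A}$ the series $e_m(t/\zeta)$ is dominated by $\sum (A|t|/\varepsilon)^n$, again using (\ref{eq:aA}). Term-by-term integration then gives $\sum u_nm(n)t^n=\hat{\Bo}_{m^{-1}}u(t)$.

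\emph{Main obstacle.} The only delicate point is justifying the interchange of sum and integral, and this is handled by the uniform geometric domination from (\ref{eq:aA}). The role of the summability-preserving assumption is only to guarantee that $m$ satisfies (\ref{eq:aA}) (Theorem \ref{th:1}(a)) and that $E_m,e_m$ are the kernels of Definition \ref{df:kernels}.
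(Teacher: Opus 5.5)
Your proof is correct and takes essentially the same route as the paper: holomorphy follows from (\ref{eq:aA}), and each integral representation follows from Cauchy's coefficient formula combined with term-by-term integration of the kernel series $E_m(t/\zeta)$ (resp.\ $e_m(t/\zeta)$) on the circle $|\zeta|=\varepsilon$. You also make explicit the uniform geometric domination by $(|t|/(a\varepsilon))^n$ (resp.\ $(A|t|/\varepsilon)^n$) that justifies interchanging sum and integral, a step the paper leaves implicit.
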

\begin{proof}
 If $u(t)\in\Oo(D_r,\EE)$ then by (\ref{eq:aA}) we see that
$\hat{\Bo}_mu(t)\in\Oo(D_{ar},\EE)$ and $\hat{\Bo}_{m^{-1}}u(t)\in\Oo(D_{r/A},\EE)$. Fix $\varepsilon\in(0,r)$. By the Taylor formula for the function $u$ and by the Cauchy integral formula we get for $t\in D_{a\varepsilon}$
\begin{multline*}
 \hat{\Bo}_mu(t)=\sum_{n=0}^{\infty}\frac{u_n}{m(n)}t^n=\sum_{n=0}^{\infty}\frac{u^{(n)}(0)}{n!m(n)}t^n=\sum_{n=0}^{\infty}\frac{n!}{2\pi i}\oint_{|\zeta|=\varepsilon}\frac{u(\zeta)}{\zeta^{n+1}n!m(n)}t^n\,d\zeta\\
 =\frac{1}{2\pi i}\oint_{|\zeta|=\varepsilon}\frac{u(\zeta)}{\zeta}\sum_{n=0}^{\infty}\frac{t^n}{\zeta^n m(n)}\,d\zeta=\frac{1}{2\pi i}\oint_{|\zeta|=\varepsilon}\frac{u(\zeta)}{\zeta}E_m\left(\frac{t}{\zeta}\right)\,d\zeta.
\end{multline*}
Analogously we show the integral representation of $\hat{\Bo}_{m^{-1}}u(t)$.
\end{proof}

\begin{Prop}\label{prop:5}
 Let $m$ be a sequence preserving summability, $d\in\RR$ and $k>0$. If $u\in \Oo^k(\hat{S}_d)$ then also the functions ${\Bo}_mu$ and ${\Bo}_{m^{-1}}u$ belong to the same space $\Oo^k(\hat{S}_d)$ and for every $t\in\hat{S}_d$ they have integral representations
 \begin{equation}
 \label{eq:deform}
  {\Bo}_mu(t)=\frac{1}{2\pi i}\oint_{\gamma_{t,d}}\frac{u(\zeta)}{\zeta}E_m\left(\frac{t}{\zeta}\right)\,d\zeta\quad\text{and}\quad {\Bo}_{m^{-1}}u(t)=\frac{1}{2\pi i}\oint_{\tilde{\gamma}_{t,d}}\frac{u(\zeta)}{\zeta}e_m\left(\frac{t}{\zeta}\right)\,d\zeta,
 \end{equation}
where the contours $\gamma_{t,d}:=\partial((\hat{S}_d)_{-\delta}\cap D_{|t|/a+1})$ and $\tilde{\gamma}_{t,d}:= \partial((\hat{S}_d)_{-\delta}\cap D_{A|t|+1})$ are positively oriented, $(\hat{S}_d)_{-\delta}:=\{z\in\hat{S}_d\colon \dist(z,\partial\hat{S}_d)>\delta\}$ and $\delta:=2\dist(t,\partial\hat{S}_d)$.
\end{Prop}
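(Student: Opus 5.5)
Take $u\in\Oo^k(\hat{S}_d)$, which in particular lies in $\Oo(D_r)$ for some $r>0$. The plan is to start from the local representation (\ref{eq:integral_1}) of Proposition \ref{prop:integral}, deform the circle $|\zeta|=\varepsilon$ into $\gamma_{t,d}$ using Cauchy's theorem, show that the resulting integral is holomorphic in $t$ on all of $\hat{S}_d$, and then bound it. We treat only $\Bo_m$ with $E_m$; the case of $\Bo_{m^{-1}}$ with $e_m$ is identical after replacing $a$ by $1/A$ and using Proposition \ref{prop:3}(b),(c) for $e_m$.

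\emph{Deformation and holomorphic continuation.} Fix $t\in\hat{S}_d$ and consider the integrand $\zeta\mapsto u(\zeta)\zeta^{-1}E_m(t/\zeta)$. By Proposition \ref{prop:3}(b),(c), $E_m$ is holomorphic on $D_a\cup(\CC\setminus\RR_+)$, so $E_m(t/\zeta)$ can only be singular where $t/\zeta\in\RR_+$ and $|t/\zeta|\ge a$. Those points satisfy $\zeta\in t\,(0,1/a]$, a segment joining $0$ to $t/a$. The contour $\gamma_{t,d}$ encloses the region $(\hat{S}_d)_{-\delta}\cap D_{|t|/a+1}$.

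The delicate point is the choice of $\delta$. The region should contain this whole segment (for $t$ near the origin a small disc already does), and on $\gamma_{t,d}$ itself $t/\zeta$ must stay away from the cut $[a,\infty)$.
\begin{itemize}
\item On the arc $|\zeta|=|t|/a+1$ we have $|t/\zeta|<a$, so $E_m$ is holomorphic there.
\item On the lateral boundary, $\zeta$ lies at distance about $\delta$ from $\partial\hat{S}_d$. Provided the sector opening is small enough, $\arg\zeta$ then differs from $\arg t$ by a positive amount, so $t/\zeta\notin\RR_+$.
\end{itemize}
I expect this geometric bookkeeping to be the main obstacle: checking that the given $\delta$ makes $\gamma_{t,d}$ avoid $t\,[1/a,\infty)^{-1}$ and still enclose the segment up to $t/a$. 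If necessary one shrinks to a sub-disc-sector.

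Granting the geometry, for $|t|<a\varepsilon$ the integrand is holomorphic between $|\zeta|=\varepsilon$ and $\gamma_{t,d}$. Cauchy's theorem then gives (\ref{eq:deform}) for small $t$. The contour depends on $t$ only through $\delta$ and the radius, and it can be held fixed locally in $t$. The right-hand side is therefore holomorphic in $t$, and it defines the analytic continuation of $\Bo_m u$ to $\hat{S}_d$.

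\emph{Growth estimate.} Work on a sub-disc-sector $\hat{S}_d(\tilde\alpha,\tilde r)$, taking $\delta$ comparable to a fixed fraction of the distance between the two sectors, so that $E_m(t/\zeta)$ is evaluated outside $S(0,\varepsilon')$ or inside $D_{a'}$ for some $a'<a$. Since $E_m\in\Oo^{>0}(\CC\setminus\RR_+)$, we get $|E_m(t/\zeta)|\le C_\eta e^{\eta|t/\zeta|^k}$ for any $\eta>0$. The factor $|t/\zeta|$ is large only for $\zeta$ near the origin, where it is at most of order $|t|/\rho$, with $\rho$ the radius of the small disc part of the contour. On the rest of the contour, $|u(\zeta)|\le Ae^{B(|t|/a+1)^k}$ and the length of $\gamma_{t,d}$ is $O(|t|)$. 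Combining these, $|\Bo_mu(t)|\le A'e^{B'|t|^k}$, so $\Bo_mu\in\Oo^k(\hat{S}_d)$.
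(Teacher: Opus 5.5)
Your plan is the paper's own: start from (\ref{eq:integral_1}), deform $|\zeta|=\varepsilon$ into $\gamma_{t,d}$ by Cauchy's theorem, and bound the result using $u\in\Oo^k(\hat{S}_d)$ and $E_m,e_m\in\Oo^{>0}(\CC\setminus\RR_+)$. The paper settles the deformation in one line (``$t/\zeta\in D_a\cup(\CC\setminus\RR_+)$ for every $\zeta\in\gamma_t$''), and that is exactly the geometry you grant. For the contour as literally defined it cannot be granted, and your lateral-boundary bullet is where it fails.

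\begin{itemize}
\item Since $\dist(t,\partial\hat{S}_d)=\delta/2<\delta$, the point $t$ itself is not in $(\hat{S}_d)_{-\delta}$.
\item $\dist(t,\partial\hat{S}_d)$ grows linearly along rays of the sector. So for $t$ far out, $\delta$ exceeds the radius of the disc in $\hat{S}_d$, and the origin is not in $(\hat{S}_d)_{-\delta}$ either.
\item The ray $\arg\zeta=\arg t$ then lies in $(\hat{S}_d)_{-\delta}$ essentially only for $\zeta=xt$ with $x>2$; at $x=2$ you get $t/\zeta=1/2\in\RR_+$.
\end{itemize}
Hence $\gamma_{t,d}$ does not wind around the singular set $\{xt\colon 0\le x\le 1/a\}$ of $\zeta\mapsto u(\zeta)\zeta^{-1}E_m(t/\zeta)$, and it even cuts that set when $a<1/2$. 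A smaller opening does not help, because the trouble is the size of $\delta$, not the angles.

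Winding once around this whole segment, origin included, really is required. For $m=([n]_q!)_{n\ge0}$ the poles $\zeta=(1-q)q^nt$ of $E_m(t/\zeta)$ accumulate at $0$. For $m=\mathbf{1}$ one has $E_m(t/\zeta)=\zeta/(\zeta-t)$, so the right-hand side of (\ref{eq:deform}) is $\frac{1}{2\pi i}\oint_{\gamma_{t,d}}\frac{u(\zeta)}{\zeta-t}\,d\zeta=0\neq u(t)$.

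The repair is what your growth paragraph already does tacitly when it takes $\delta$ to be a fixed fraction of the distance between two sectors: make the inner part of the contour independent of $t$. Write $\hat{S}_d=\hat{S}_d(\alpha,r)$, fix $\tilde\alpha<\alpha'<\alpha$ and $r'<\min\{r,1\}$, and for $t\in S_d(\tilde\alpha)\cup D_{ar'}$ integrate over $\partial\big(\hat{S}_d(\alpha',r')\cap D_R\big)$ with $R=|t|/a+1$. This cycle has the properties you need:
\begin{itemize}
\item it lies in $\hat{S}_d$ and surrounds $\{xt\colon 0\le x\le 1/a\}$;
\item on it, either $|t/\zeta|<a$ or $t/\zeta$ stays a fixed positive angle away from $\RR_+$;
\item for $|t|<a\varepsilon$ with $\varepsilon<r'$, the integral over it agrees with (\ref{eq:integral_1}) by Cauchy's theorem;
\item $R$ may be replaced by any larger radius, so the contour can be frozen locally in $t$ and yields the holomorphic continuation.
\end{itemize}
Your ``prove it for small $t$, then continue'' structure is the right way to make the paper's ``fix $t\in\hat{S}_d$'' precise.

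In the estimate, note that on the outer arc $|t/\zeta|=a|t|/(|t|+a)\to a$ while $t/\zeta$ may be real positive. So $t/\zeta$ is neither in a fixed $D_{a'}$ with $a'<a$, nor in a sector where the $\Oo^{>0}$ bound applies. Use instead $|E_m(z)|\le(1-|z|/a)^{-1}\le(|t|+a)/a$ there, which follows from $m(n)\ge a^n$.

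With these changes your bound $A'e^{B'|t|^k}$ goes through, on a disc-sector whose disc has radius about $ar$, consistent with Proposition~\ref{prop:integral}. The $e_m$ case follows with $a$ replaced by $1/A$.
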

\begin{proof}
 By Proposition \ref{prop:integral} ${\Bo}_mu$ has the integral representation given by (\ref{eq:integral_1}). We fix $t\in\hat{S}_d$.  By the Cauchy integral theorem, we can deform the contour of integration $|\zeta|=\varepsilon$ in (\ref{eq:integral_1}) to $\gamma_{t,d}$. Indeed, it is possible since $u\in \Oo^k(\hat{S}_d)$ and $t/\zeta\in D_a\cup(\CC\setminus\RR_+)$ for every $\zeta\in\gamma_t$. Hence we conclude that ${\Bo}_mu$ satisfies (\ref{eq:deform}) for every $t\in\hat{S}_d$. Next, using (\ref{eq:deform}) we estimate the growth of ${\Bo}_mu(t)$. Since $u(t)\in \Oo^k(\hat{S}_d)$ and $E_m(z)\in\Oo^{>0}(\CC\setminus\RR_+)$ we conclude that ${\Bo}_mu(t)$ has also exponential growth of order $k$ as $t\to\infty$, $t\in\hat{S}_d$. Analogously we get the similar result for ${\Bo}_{m^{-1}}u(t)$.
\end{proof}

Immediately from the above proposition, Theorem \ref{th:2} and Definition \ref{df:preserve} we get a new characterization of sequences preserving summability.
\begin{Prop}
 A sequence $m=(m(n))_{n\geq 0}$ preserves summability if and only if for any $k>0$, $d\in\RR$ and for every $\varphi\in\Oo(D)$ there exists a disc-sector $\hat{S}_d$ such that the following equivalence holds:
  \begin{equation*}
  \varphi\in\Oo^{k}(\hat{S}_d)\quad \textrm{if and only if}\quad {\Bo}_{m}\varphi\in\Oo^{k}(\hat{S}_d).
  \end{equation*}
\end{Prop}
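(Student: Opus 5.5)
We prove the two implications separately. The forward direction comes from Proposition \ref{prop:5}. The reverse direction is reduced to the characterization in Theorem \ref{th:2} by testing the hypothesis on the geometric series $\varphi(t)=\frac{1}{1-t}$.

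\emph{($\Rightarrow$)} Assume $m$ preserves summability. Fix $k>0$, $d\in\RR$ and $\varphi\in\Oo(D)$. If $\varphi\in\Oo^k(\hat{S}_d)$, then Proposition \ref{prop:5} gives $\Bo_m\varphi\in\Oo^k(\hat{S}_d)$ on the same disc-sector. Conversely, assume $\psi:=\Bo_m\varphi\in\Oo^k(\hat{S}_d)$. By Theorem \ref{th:1}(b), $m^{-1}$ also preserves summability. Since $\hat{\Bo}_{m^{-1}}\hat{\Bo}_m=\mathrm{id}$ on power series, Proposition \ref{prop:integral} shows that $\Bo_{m^{-1}}\psi=\varphi$ near the origin. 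Applying Proposition \ref{prop:5} to $\psi$ with the sequence $m^{-1}$ then gives $\varphi\in\Oo^k(\hat{S}_d)$. Both integral representations in (\ref{eq:deform}) are available, so this is just Proposition \ref{prop:5} used twice.

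\emph{($\Leftarrow$)} Assume the equivalence holds for all $k,d,\varphi$. Take $\varphi(t)=\frac{1}{1-t}=\sum_{n}t^n$. For every direction $d\not\equiv 0 \pmod{2\pi}$ there is a disc-sector $\hat{S}_d$ avoiding $\RR_+\setminus D_1$, and $\varphi$ is bounded there. Hence $\varphi\in\Oo^k(\hat{S}_d)$ for every $k>0$, and by hypothesis $\Bo_m\varphi=\hat{\Bo}_m\big(\sum t^n\big)=E_m\in\Oo^k(\hat{S}_d)$ for every $k>0$.

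Two points need care here. First, the disc-sector supplied by the hypothesis may depend on $k$. Since any two disc-sectors in direction $d$ share a smaller disc-sector with the same bisecting direction, we can work on that common one. Second, we must pass from sectorial information to $E_m\in\Oo^{>0}(\CC\setminus\RR_+)$. Since $E_m=\hat{\Bo}_m\varphi$ is a single germ at $0$, its analytic continuations along the different sectors $S_d$, $d\in(0,2\pi)$, agree on overlaps. They therefore glue to a single-valued holomorphic function on $\CC\setminus\RR_+$. A compact subset of $(0,2\pi)$ of directions is covered by finitely many such sectors, so the growth bounds transfer, and we get $E_m\in\Oo^{>0}(\CC\setminus\RR_+)$.

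For $e_m=\hat{\Bo}_{m^{-1}}\big(\sum t^n\big)$ we need a $\chi\in\Oo(D)$ with $\Bo_m\chi=\varphi$, namely $\chi=\hat{\Bo}_{m^{-1}}\varphi=e_m$. Here lies a subtlety: we must know that $e_m$ is convergent, i.e.\ that the upper bound $m(n)\le A^n$ in (\ref{eq:aA}) holds. I would first derive (\ref{eq:aA}) from the hypothesis. The lower bound $m(n)\geq a^n$ comes from $E_m=\Bo_m\varphi$ being holomorphic near $0$, which is implicit in the statement. The upper bound, needed for convergence of $e_m$, I expect to be the main obstacle: it must be extracted from the implicit requirement that $\Bo_m$ is defined on all of $\Oo(D)$.

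Once $e_m\in\Oo(D)$ is known, the ``if'' part of the hypothesis applied to $\chi=e_m$ gives $\chi\in\Oo^k(\hat{S}_d)$ for all $k$ and all $d\neq0$, because $\Bo_m\chi=\varphi$ lies in these spaces. Gluing as before yields $e_m\in\Oo^{>0}(\CC\setminus\RR_+)$. Theorem \ref{th:2} then shows that $m$ preserves summability.
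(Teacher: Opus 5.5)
Your route is the paper's: the forward implication comes from Proposition \ref{prop:5} (for $m$ and $m^{-1}$), and the reverse one from Theorem \ref{th:2} by testing on $\sum_n t^n$. The forward half and the $E_m$ half are fine.

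The gap is the upper bound $m(n)\le A^n$ of (\ref{eq:aA}). Without it $e_m$ is a divergent series and $\chi=e_m\notin\Oo(D)$. The hypothesis then cannot be applied to $\chi$, and the condition on $\hat{\Bo}_{m^{-1}}(\sum_n t^n)$ in Theorem \ref{th:2} is never reached. You leave this open, and the source you propose cannot supply it. Requiring that $\Bo_m\varphi=\sum_n\varphi_nt^n/m(n)$ converge for every $\varphi\in\Oo(D)$ gives only the lower bound $m(n)\ge a^n$. It is satisfied, for example, by $m(n)=e^{n^2}$ or $m(n)=(\log(n+e))^n$, for which $e_m$ diverges. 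So as written the reverse implication is not proved. The paper calls the proposition immediate from Proposition \ref{prop:5}, Theorem \ref{th:2} and Definition \ref{df:preserve} and does not discuss this point either. You must therefore either impose (\ref{eq:aA}) as a standing assumption on $m$ or derive it.

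It can be derived from the ``if'' half of the hypothesis. For $s>0$ let $\varphi_s(t):=\sum_n m(n)t^n/\Gamma(1+sn)$. Then $\Bo_m\varphi_s=\mathbf{E}_s$ is entire of order $1/s$ and finite type, hence lies in $\Oo^{1/s}(\hat{S}_d)$ for every $d$.

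\emph{Case $\varphi_s\in\Oo(D)$ for some $s>0$.} The hypothesis in every direction, together with your gluing argument, makes $\varphi_s$ entire with $|\varphi_s(t)|\le Ae^{B|t|^{1/s}}$ on $\CC$. The Cauchy estimate with $r^{1/s}=sn/B$ gives $m(n)/\Gamma(1+sn)\le A(eB/(sn))^{sn}$. Stirling's formula then yields $m(n)\le A'C^n$.

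\emph{Case $\varphi_s\notin\Oo(D)$ for every $s>0$.} Then for each $j$ there are infinitely many $n$ with $m(n)\ge\Gamma(1+jn)$. So you can pick $n_{j+1}\ge 2n_j$ with $m(n_j)\ge\Gamma(1+jn_j)$. By Hadamard's gap theorem, $\varphi=\sum_jt^{n_j}\in\Oo(D)$ has $|t|=1$ as a natural boundary, so it lies in no $\Oo^k(\hat{S}_d)$. On the other hand, $\Bo_m\varphi=\sum_jt^{n_j}/m(n_j)$ is entire of order $0$, since its tail over $j\ge J$ is dominated by $\mathbf{E}_J(|t|)$. Hence it lies in every $\Oo^k(\hat{S}_d)$, contradicting the hypothesis.

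With (\ref{eq:aA}) in hand, your argument with $\chi=e_m$, for which $\Bo_m\chi=\sum_nt^n$, finishes the proof.
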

\bigskip\par
 
We will introduce the following convolution of kernels in the similar spirit to \cite[Proposition 4.15]{J-K-L-S}.  
\begin{Def}
 Let $m_1$ and $m_2$ be sequences preserving summability and satisfying (\ref{eq:aA}) with $A>0$. Then the \emph{convolution of kernels $e_{m_1}$ and $e_{m_2}$} is a holomorphic function on $D_{1/A^2}$ defined by
 \begin{equation}
 \label{eq:convolution}
  e_{m_1}*e_{m_2}(z):=\frac{1}{2\pi i}\oint_{|\zeta|=\varepsilon}e_{m_1}(z/\zeta) e_{m_2}(\zeta)\frac{d\zeta}{\zeta}\quad\text{for every}\quad z\in D_{\varepsilon/A},
 \end{equation}
where $\varepsilon<1/A$ is fixed.
\end{Def}
\begin{Rem}
 Since $E_m(z)=e_{m^{-1}}(z)$, the formula (\ref{eq:convolution}) describes also convolutions $E_{m_1}*e_{m_2}(z)$, $e_{m_1}*E_{m_2}(z)$ and $E_{m_1}*E_{m_2}(z)$.
\end{Rem}

In the next proposition we describe fundamental properties of the convolutions of kernels
\begin{Prop}
If $m_1$ and $m_2$ are sequences preserving summability which satisfy                                                                (\ref{eq:aA}) for some $A>0$ then the convolution of kernels $e_{m_1}$ and $e_{m_2}$ is analytically continued to $D_{1/A^2}\cup(\CC\setminus\RR_+)$ and satisfies
 \begin{equation}
 \label{eq:conv}
  e_{m_1}*e_{m_2}(z)=e_{m_1m_2}(z)=e_{m_2}*e_{m_1}(z)\quad \text{for every}\quad z\in D_{1/A^2}.
 \end{equation}
\end{Prop}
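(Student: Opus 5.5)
The plan is to prove the identity (\ref{eq:conv}) on a small disc by a termwise computation, and then obtain the analytic continuation from the fact that $m_1m_2$ preserves summability.

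\emph{Step 1 (the identity on a disc).} Fix $\varepsilon<1/A$ and $z\in D_{\varepsilon/A}$. On the circle $|\zeta|=\varepsilon$ we have $|\zeta|<1/A$ and $|z/\zeta|<1/A$. By (\ref{eq:aA}) and Proposition~\ref{prop:3}(b), both series
\begin{equation*}
e_{m_2}(\zeta)=\sum_{k\ge0}m_2(k)\zeta^k\quad\text{and}\quad e_{m_1}(z/\zeta)=\sum_{n\ge0}m_1(n)z^n\zeta^{-n}
\end{equation*}
converge absolutely and uniformly there. The double series $\sum_{n,k}m_1(n)m_2(k)z^n\zeta^{k-n-1}$ is therefore absolutely and uniformly convergent on the circle. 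We may then integrate term by term. Since $\frac{1}{2\pi i}\oint_{|\zeta|=\varepsilon}\zeta^{k-n-1}\,d\zeta=\delta_{kn}$, this gives
\begin{equation*}
e_{m_1}*e_{m_2}(z)=\sum_{n\ge0}m_1(n)m_2(n)z^n=e_{m_1m_2}(z).
\end{equation*}
The right-hand side does not depend on $\varepsilon$, so the definition is consistent. It is also symmetric in $m_1,m_2$, which yields $e_{m_2}*e_{m_1}=e_{m_1m_2}$ as well.

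\emph{Step 2 (the full disc $D_{1/A^2}$).} Since $m_1(n)m_2(n)\le A^{2n}$, the series $e_{m_1m_2}$ converges on $D_{1/A^2}$. Letting $\varepsilon\uparrow 1/A$ in Step~1, the identity holds on $\bigcup_{\varepsilon<1/A}D_{\varepsilon/A}=D_{1/A^2}$. This is also the sense in which the convolution is holomorphic on $D_{1/A^2}$.

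\emph{Step 3 (analytic continuation).} By Theorem~\ref{th:1}(b), $m_1m_2$ preserves summability. Proposition~\ref{prop:3}(c) then gives $e_{m_1m_2}\in\Oo^{>0}(\CC\setminus\RR_+)$. Because $e_{m_1m_2}$ agrees with $e_{m_1}*e_{m_2}$ on $D_{1/A^2}$, it supplies the required continuation of $e_{m_1}*e_{m_2}$ to $D_{1/A^2}\cup(\CC\setminus\RR_+)$. The continuation is single-valued: the union is connected and simply connected in the relevant sense, and the continuation is given by one global function.

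The only delicate point is justifying the term-by-term integration. Choosing $\varepsilon<1/A$ and $|z|<\varepsilon/A$ puts both series strictly inside their discs of convergence on the contour, so the double series is dominated by a convergent product of geometric series, and the interchange of sum and integral is justified.
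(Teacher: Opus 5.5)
Your proof is correct and follows essentially the same route as the paper. You fix $\varepsilon\in(A|z|,1/A)$, expand both kernels in their uniformly convergent series on $|\zeta|=\varepsilon$, integrate term by term using $\frac{1}{2\pi i}\oint\zeta^{k-n-1}\,d\zeta=\delta_{kn}$, and obtain the second equality from the symmetry of $m_1m_2$; the one addition is that you explicitly justify the continuation to $D_{1/A^2}\cup(\CC\setminus\RR_+)$ by applying Theorem~\ref{th:1}(b) and Proposition~\ref{prop:3}(c) to $e_{m_1m_2}$, which the paper's proof leaves implicit.
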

\begin{proof}
Fix $z\in D_{1/A^2}$ and $\varepsilon\in(A|z|,1/A)$. Since by Proposition \ref{prop:3} the series $e_{m_1}(z/\zeta)$ and $e_{m_2}(\zeta)$ are convergent uniformly on $|\zeta|=\varepsilon$,
we can change the order of integration and summation
\begin{multline*}
 e_{m_1}*e_{m_2}(z):=\frac{1}{2\pi i}\oint_{|\zeta|=\varepsilon}e_{m_1}(z/\zeta) e_{m_2}(\zeta)\frac{d\zeta}{\zeta}\\
 =\frac{1}{2\pi i}\oint_{|\zeta|=\varepsilon}\left(\sum_{n=0}^{\infty}m_1(n)(z/\zeta)^n\right)\left(\sum_{k=0}^{\infty}m_2(k)\zeta^k\right)\frac{d\zeta}{\zeta}\\
 =\sum_{n=0}^{\infty}m_1(n)\left(\sum_{k=0}^{\infty}\frac{m_2(k)}{2\pi i}\oint_{|\zeta|=\varepsilon}\frac{d\zeta}{\zeta^{n-k+1}}\right)z^n.
\end{multline*}
Moreover, by the Cauchy integral formula we get 
\begin{equation*}
 \sum_{n=0}^{\infty}m_1(n)\left(\sum_{k=0}^{\infty}\frac{m_2(k)}{2\pi i}\oint_{|\zeta|=\varepsilon}\frac{d\zeta}{\zeta^{n-k+1}}\right)z^n=\sum_{n=0}^{\infty}m_1(n)m_2(n)z^n=e_{m_1m_2}(z)
\end{equation*}
and the first equality in (\ref{eq:conv}) holds.
To show the second equality in (\ref{eq:conv}) it is sufficient to observe that $m_1m_2=(m_1(n)m_2(n))_{n\geq 0}=(m_2(n)m_1(n))_{n\geq 0}=m_2m_1$ and to repeat the first part of the proof with $m_1$ replaced by $m_2$ and $m_2$ replaced by $m_1$.
\end{proof}
\begin{Rem} By \cite[page 84]{H} the convolution of holomorphic functions $\varphi,\psi\in\Oo(D)$ given by
\begin{equation*}
 \frac{1}{2\pi i}\oint_{|\zeta|=\varepsilon}\varphi(z/\zeta) \psi(\zeta)\frac{d\zeta}{\zeta}
\end{equation*}
is the integral representation of the \emph{Hadamard product} of these functions, which is defined as
 \begin{equation}
  \label{eq:hadamard}
  \varphi*\psi(z):=\sum_{n=0}^{\infty}\frac{\varphi^{(n)}(0)\psi^{(n)}(0)}{(n!)^2}z^n.
 \end{equation}
 Applying (\ref{eq:hadamard}) to $\varphi(z)=e_{m_1}(z)$ and $\psi(z)=e_{m_2}(z)$ we get directly
 \begin{equation*}
 e_{m_1}*e_{m_2}(z)=\left(\sum_{n=0}^{\infty}m_1(n)z^n\right)*\left(\sum_{n=0}^{\infty}m_2(n)z^n\right)=\sum_{n=0}^{\infty}m_1(n)m_2(n)z^n=e_{m_1m_2}(z),
 \end{equation*}
which gives the another proof of (\ref{eq:conv}).
\end{Rem}

\section{Sequences of generalized moments}\label{sec:7}
Let $\tilde{m}=(\tilde{m}(n))_{n\geq0}$ be a sequence of moments of order $s>0$ in Balser's sense (see \cite[page 86]{B2}). Then there exist $a,A>0$ such that
\begin{equation*}
 a^n(n!)^s\leq \tilde{m}(n)\leq A^n(n!)^s\quad\text{for every}\quad n\in\NN_0.
\end{equation*}
The standard example of such sequence of moments of order $s>0$ is given by $\Gamma_s:=(\Gamma(1+ns))_{n\geq0}$. Then we may write
\begin{equation*}
 \tilde{m}(n)=\frac{\tilde{m}(n)}{\Gamma_s(n)}\Gamma_s(n)=m(n)\Gamma_s(n),
\end{equation*}
where $m=(m(n))_{n\geq0}=\left(\frac{\tilde{m}(n)}{\Gamma_s(n)}\right)_{n\geq 0}$ is a sequence of moments of order zero (see \cite[Definition~4]{Mic8} for a definition of a moment function of order zero). By example \ref{ex:1} $m$ preserves summability. 
It suggests to extend the sequences of moments of order $s>0$ to sequences $m\Gamma_s$, where $m$ preserves summability.
\begin{Def}\label{df:generalized}
 If $m$ is a sequence preserving summability and $s\geq 0$ then $\tilde{m}=m\Gamma_s$ is called a \emph{sequence of generalized moments of order $s$}.
\end{Def}

\begin{Rem}
Observe that the set of sequences of generalized moments contains the set of sequences preserving summability and the set of sequences of moments. Moreover, since $\Gamma_0=(\Gamma(1+n0))_{n\geq 0}=(1)_{n\geq0}=\mathbf{1}$ we see that the set of sequences of generalized moments of order $0$ coincides with the set of sequences preserving summability. 
\end{Rem}

\begin{Rem}
 Directly be the definition and by properties of sequences of moments and sequences preserving summability we conclude that the set of sequences of generalized moments forms a semigroup with multiplication and with identity $\mathbf{1}=(1)_{n\geq 0}$.
\end{Rem}

\begin{Rem}\label{re:summable}
 Directly by Definitions \ref{df:borel}, \ref{df:summable}, \ref{df:preserve} and \ref{df:generalized} we see that for every sequence $\tilde{m}$ of generalized moments of order $1/k$ the following equivalence holds: $\hat{u}\in\EE[[t]]$ is $k$-summable in a direction $d$ if and only if there exists a disc-sector $\hat{S}_d$ in a direction $d$ such that $\hat{\Bo}_{\tilde{m}}\hat{u}\in\Oo^k(\hat{S}_d,\EE)$.
\end{Rem}

\section{Extension of Laplace and Borel integral operators}\label{sec:8}
In this section we extend the definitions of Laplace and Borel integral operators $\La_{\Gamma_{1/k},d}$ and $\Bo_{\Gamma_{1/k},d}$ given for the sequence $\Gamma_{1/k}$ to any sequence of generalized moments of order $1/k$.
We use the operators $\La_{\Gamma_{1/k},d}$ and $\Bo_{\Gamma_{1/k},d}$ constructed in Definition \ref{def:LB_operators} and the operators $\Bo_m$ and $\Bo_{m^{-1}}$ given by (\ref{eq:deform}) to construct integral operators connected with sequences of generalized moments.
\begin{Def}
 If $\tilde{m}=m\Gamma_s$ is a sequence of generalized moments of order $s>0$ and $k=1/s$ then we can define \emph{Laplace-like} $\La_{\tilde{m},d}$ and \emph{Borel-like} $\Bo_{\tilde{m},d}$ \emph{integral operators} connected with the formal operators $\hat{\Bo}_{\tilde{m}^{-1}}$ and $\hat{\Bo}_{\tilde{m}}$ as
 \begin{equation}\label{eq:like}
  \La_{\tilde{m},d}:=\La_{\Gamma_{1/k},d}\circ\Bo_{m^{-1}}\quad\text{and}\quad \Bo_{\tilde{m},d}:=\Bo_m\circ\Bo_{\Gamma_{1/k},d}.
 \end{equation}
\end{Def}

\begin{Rem}\label{re:laplace}
 Operators $\La_{\tilde{m},d}$ and $\Bo_{\tilde{m},d}$ are well defined. Indeed by Proposition \ref{prop:5}, if $v\in\Oo^k(\hat{S}_d)$ then $\Bo_{m^{-1}}v\in\Oo^k(\hat{S}_d)$  and if $\Bo_{\Gamma_{1/k},d}u\in\Oo^k(\hat{S}_d)$ then also $\Bo_m(\Bo_{\Gamma_{1/k},d}u)\in\Oo^k(\hat{S}_d)$. 
 
 Moreover, if $\hat{S}_d=\hat{S}_d(\varepsilon)$ for some $\varepsilon>0$ and $u\in\Oo^k(\hat{S}_d)$ then by \cite[Section 5.1]{B2} and by Proposition \ref{prop:5} we conclude that $\La_{\tilde{m},d}u\in\Oo(G(d,\pi/k+\varepsilon))$, where $G(d,\pi/k+\varepsilon)$ denotes a sectorial region of opening $\pi/k+\varepsilon$ and bisecting direction $d$. Conversely, if $v\in\Oo(G(d,\pi/k+\varepsilon))$ then by \cite[Section 5.2]{B2} and by Proposition \ref{prop:5} we see that $\Bo_{\tilde{m},d}v\in\Oo^k(\hat{S}_d(\varepsilon))$.
\end{Rem}

\begin{Rem}\label{re:sum}
 Let $k>0$ and $d\in\RR$. If $\hat{u}\in\EE\{t\}_{k,d}$ and $\tilde{m}=m\Gamma_{1/k}$ is a sequence of generalized moments of order $1/k$ then $k$-sum of $\hat{u}$ in the direction $d$ may be written as
 \begin{equation*}
  u^d(t)=\Ss_{k,d}\hat{u}(t)=\La_{\tilde{m},d}\circ \hat{\Bo}_{\tilde{m}}\hat{u}(t).
 \end{equation*}
Indeed, since $v(t)=\hat{\Bo}_{\Gamma_{1/k}}\hat{u}(t)\in\Oo^k(\hat{S}_d,\EE)$, by Propositions \ref{prop:integral} and \ref{prop:5} we see that also 
$\hat{\Bo}_mv(t)=\Bo_mv(t)\in\Oo^k(\hat{S}_d,\EE)$. Therefore
\begin{multline*}
 \La_{\tilde{m},d}\circ \hat{\Bo}_{\tilde{m}}\hat{u}(t)=\La_{\Gamma_{1/k},d}\circ\Bo_{m^{-1}}\circ\hat{\Bo}_m\circ \hat{\Bo}_{\Gamma_{1/k}}\hat{u}(t)=\La_{\Gamma_{1/k},d}\circ\Bo_{m^{-1}}\circ\Bo_m\circ \hat{\Bo}_{\Gamma_{1/k}}\hat{u}(t)\\
 =\La_{\Gamma_{1/k},d}\circ \hat{\Bo}_{\Gamma_{1/k}}\hat{u}(t)=\Ss_{k,d}\hat{u}(t).
\end{multline*}

\end{Rem}

\begin{Rem} Using Balser's theory of moment summability \cite[Chapter 6.5]{B2} for given $k>0$ and $d\in\RR$ we can replace the Borel transform $\Bo_{\Gamma_{1/k},d}$ by the \emph{modified Borel transform $\tilde{\Bo}_{\Gamma_{1/k},d}$ of order $k$ in a direction $d$},
which is defined in the following way.
If $u\in\Oo(S_d(\frac{\pi}{k}+\varepsilon,R),\EE)$ for some $\varepsilon>0$ and $R<\infty$ then  $\tilde{\Bo}_{\Gamma_{1/k},d}$ is defined by
  \[
   (\tilde{\Bo}_{\Gamma_{1/k},d}u)(s):=-\frac{1}{2\pi i}\int_{\gamma_k(d)}\mathbf{E}_{1/k}(s/t)u(t)\frac{dt}{t},
  \]
where $\gamma_k(d)$ is the same path as in Definition  \ref{def:LB_operators} and $\mathbf{E}_{1/k}(z)=\sum_{n=0}^{\infty}\frac{z^n}{\Gamma(1+n/k)}$ denotes the Mittag-Leffler function of index $1/k$. Hence we may also replace the Borel-like integral operator $\Bo_{\tilde{m},d}$ defined in (\ref{eq:like}), by the \emph{modified Borel-like operator}
$\tilde{\Bo}_{\tilde{m},d}$ defined as
\begin{equation*}
\tilde{\Bo}_{\tilde{m},d}:=\Bo_m\circ\tilde{\Bo}_{\Gamma_{1/k},d}.
\end{equation*}

\end{Rem}

\section{Moment differential operators for sequences of generalized moments}\label{sec:9}

The notion of $m$-moment differentiation was introduced by Balser and Yoshino~\cite{B-Y} in the case when $m$ is a moment sequence associated with a pair of kernel functions (see \cite[Section 5.5]{B2}) and then extended in~\cite{I-M} to any sequence $m=(m(n))_{n\geq 0}$ of positive numbers with $m(0)=1$. 
\begin{Def}
 For a given sequence $m=(m(n))_{n\geq 0}$ of positive numbers with $m(0)=1$, an operator $\partial_{m,t}\colon\EE[[t]] \to \EE[[t]]$ defined by
 \begin{equation*}
  \partial_{m,t}\big(\sum_{n=0}^{\infty}u_nt^n\big):=\sum_{n=0}^{\infty}\frac{m(n+1)}{m(n)}u_{n+1}t^n
 \end{equation*}
is called an \emph{$m$-moment differential operator}.
\end{Def}

\begin{Ex}\phantom{a}\\[-2ex]
\begin{enumerate}
  \item In the most important case $\Gamma_1=(\Gamma(1+n))_{n\geq 0}=(n!)_{n\geq 0}$, the operator $\partial_{\Gamma_1,t}$ is the $\Gamma_1$-moment differential operator, which coincides with the usual derivative $\partial_t$.
  \item More generally, if $s>0$ and $\Gamma_s=(\Gamma(1+sn))_{n\geq 0}$ then the operator $\partial_{\Gamma_s,t}$ satisfies
     $(\partial_{\Gamma_s,t}\hat{u})(t^s)=\partial^s_t(\hat{u}(t^s))$,
     where $\partial^s_t$ denotes the Caputo fractional derivative of order $s$
     defined by
       $$
     \partial^{s}_{t}\Big(\sum_{j=0}^{\infty}\frac{u_{j}}{\Gamma(1+sj)}t^{sj}\Big):=
     \sum_{j=0}^{\infty}\frac{u_{j+1}}{\Gamma(1+sj)}t^{sj}.$$
 \item If $m=([n]_q!)_{n\geq 0}$ then the operator $\partial_{m,t}$ coincides with the Jackson $q$-derivative $$D_{q,t} u(t)=\frac{u(qt)-u(t)}{qt -t}.$$
 \end{enumerate}

\end{Ex}

By the direct calculation we receive the following commutation formula between $m_1$-Borel operator and $m_2$-moment differentiation.
\begin{Prop}[see {\cite[Proposition 7]{Mic8}}]
\label{prop:commutate}
 Let $m_1=(m_1(n))_{n\geq 0}$ and $m_2=(m_2(n))_{n\geq 0}$ be sequences of positive numbers. Then the operators $\hat{\Bo}_{m_1},\partial_{m_2,t}\colon\EE[[t]]\to\EE[[t]]$ commute in a such way that
 \begin{equation*}
  \hat{\Bo}_{m_1}\partial_{m_2,t}=\partial_{m_1m_2,t}\hat{\Bo}_{m_1}.
 \end{equation*}
\end{Prop}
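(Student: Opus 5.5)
The identity $\hat{\Bo}_{m_1}\partial_{m_2,t}=\partial_{m_1m_2,t}\hat{\Bo}_{m_1}$ is an equality of linear operators on $\EE[[t]]$. Both operators act coefficientwise: $\hat{\Bo}_{m_1}$ rescales the $n$-th coefficient, and $\partial_{m_2,t}$ shifts indices by one and rescales. So I plan to verify the identity on an arbitrary formal series $\hat{u}(t)=\sum_{n\geq 0}u_nt^n\in\EE[[t]]$ by comparing the coefficients of $t^n$ on both sides. No convergence questions arise, because every operation involved is defined termwise on formal series.

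First I compute the left-hand side. By the definition of the $m_2$-moment differential operator,
\[
\partial_{m_2,t}\hat{u}(t)=\sum_{n\geq0}\frac{m_2(n+1)}{m_2(n)}u_{n+1}t^n .
\]
Applying Definition \ref{df:borel} with the sequence $m_1$ divides the $n$-th coefficient by $m_1(n)$. This gives the coefficient
\[
\frac{m_2(n+1)}{m_1(n)m_2(n)}u_{n+1}.
\]

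Next I compute the right-hand side. First, $\hat{\Bo}_{m_1}\hat{u}(t)=\sum_{n\geq0}\frac{u_n}{m_1(n)}t^n$. Applying $\partial_{m_1m_2,t}$ to this series multiplies its $(n+1)$-st coefficient $u_{n+1}/m_1(n+1)$ by $\frac{m_1(n+1)m_2(n+1)}{m_1(n)m_2(n)}$ and places the result at $t^n$. The factor $m_1(n+1)$ cancels, so the coefficient of $t^n$ is again
\[
\frac{m_2(n+1)}{m_1(n)m_2(n)}u_{n+1}.
\]
The two coefficients agree for every $n$, which proves the identity.

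There is no genuine obstacle in this argument. The only point needing care is the index bookkeeping in the right-hand side computation: one must apply the moment derivative to the $(n+1)$-st coefficient of $\hat{\Bo}_{m_1}\hat{u}$, not to $u_{n+1}$ itself, so that the factor $m_1(n+1)$ cancels correctly. Positivity of the sequences guarantees that all the quotients are well defined.
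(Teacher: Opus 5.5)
Your proposal is correct and matches the paper's approach: the paper states the identity follows "by the direct calculation" (citing [Mic8, Proposition 7]), and your coefficientwise comparison, in which the factor $m_1(n+1)$ cancels to give $\frac{m_2(n+1)}{m_1(n)m_2(n)}u_{n+1}$ on both sides, is exactly that calculation.
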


We have the following integral representation of $\Gamma_s$-moment derivative of a holomorphic function in a complex neighbourhood of the origin.
\begin{Prop}[{\cite[Proposition 10]{Mic7}}]
 \label{prop:integral_rep}
 Let  $s>0$, $k=1/s$ and $\varphi\in\Oo(D_r)$ for some $r>0$.
 Then for every $|z|<\varepsilon<r$ and $n\in\NN_0$ we have
 \begin{equation}
 \partial_{\Gamma_s,z}^n\varphi(z)=\frac{1}{2\pi i}\oint_{|w|=\varepsilon}\varphi(w)\int_0^{\infty(e^{i\theta})}\zeta^n\mathbf{E}_s(z\zeta)k(w\zeta)^{k-1}e^{-(w\zeta)^k}\,d\zeta\, dw,
\end{equation}
where $\theta\in(-\arg w - \frac{\pi}{2k},-\arg w + \frac{\pi}{2k})$ and $\mathbf{E}_s$ denotes the Mittag-Leffler function of index $s$, i.e. $\mathbf{E}_s(z)=E_{\Gamma_s}(z)=\sum_{n=0}^{\infty}\frac{z^n}{\Gamma(1+sn)}$.
\end{Prop}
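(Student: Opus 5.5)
The plan is to reduce the identity to a termwise computation on Taylor coefficients, using the Cauchy integral formula and the Laplace integral that reproduces $\Gamma(1+s j)$. First write $\varphi(w)=\sum_{j\ge0}\varphi_j w^j$ on $D_r$. By definition of $\partial_{\Gamma_s,z}$, iterating $n$ times gives
\begin{equation*}
\partial_{\Gamma_s,z}^n\varphi(z)=\sum_{j=0}^{\infty}\frac{\Gamma(1+s(j+n))}{\Gamma(1+sj)}\varphi_{j+n}z^j ,
\end{equation*}
which converges for $|z|<r$ because the quotient of gamma factors grows only polynomially in $j$. So it suffices to show that the right-hand side of the claimed formula equals this series.

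For the inner integral, fix $w$ on $|w|=\varepsilon$ and $\theta$ in the stated range. Then $\arg(w\zeta)\in(-\frac{\pi}{2k},\frac{\pi}{2k})$, so $\mathrm{Re}\,(w\zeta)^k\ge c|w\zeta|^k$ with $c>0$ depending on $\theta+\arg w$. The Mittag-Leffler function satisfies $|\mathbf{E}_s(z\zeta)|\le Ce^{(|z||\zeta|)^{k}}$, and $|z|<\varepsilon=|w|$. Hence, once $\theta$ is chosen close to $-\arg w$ (which is allowed, since by Cauchy's theorem the integral does not depend on $\theta$ in the admissible range), the product $\zeta^n\mathbf{E}_s(z\zeta)e^{-(w\zeta)^k}$ is integrable. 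I would expand $\mathbf{E}_s(z\zeta)=\sum_j z^j\zeta^j/\Gamma(1+sj)$ and exchange sum and integral; dominated convergence applies because $\sum_j |z\zeta|^j/\Gamma(1+sj)$ has the same bound $Ce^{(|z||\zeta|)^k}$. Substituting $u=(w\zeta)^k$ gives
\begin{equation*}
\int_0^{\infty(e^{i\theta})}\zeta^{n+j}k(w\zeta)^{k-1}e^{-(w\zeta)^k}\,d\zeta=w^{-(n+j)-1}\int_0^\infty u^{s(n+j)}e^{-u}\,du=\frac{\Gamma(1+s(n+j))}{w^{n+j+1}} .
\end{equation*}
The ray can be rotated back to the positive real axis because the integrand decays in the sector. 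The inner integral therefore equals $\sum_j \frac{\Gamma(1+s(n+j))}{\Gamma(1+sj)}\frac{z^j}{w^{n+j+1}}$.

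Finally, integrate against $\varphi(w)$ over $|w|=\varepsilon$. On the circle the series in $j$ converges uniformly, being dominated by $\sum_j j^{sn}C'(|z|/\varepsilon)^j$, so the sum and the contour integral can be exchanged. Cauchy's formula $\frac{1}{2\pi i}\oint\varphi(w)w^{-(n+j+1)}dw=\varphi_{n+j}$ then yields exactly the series above.

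The main obstacle is the justification step: choosing $\theta$ uniformly as $w$ runs around the circle (for instance $\theta=-\arg w$) so that the joint convergence estimates hold uniformly, and checking that the inner integral is continuous in $w$. The closed form obtained for the inner integral settles the continuity, and the uniform choice $\theta=-\arg w$ gives decay like $e^{-(|w|^k-|z|^k)|\zeta|^k}$.
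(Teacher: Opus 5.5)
The paper does not prove this proposition; it quotes it from \cite[Proposition 10]{Mic7}. Your termwise argument is the natural proof, and its ingredients are correct:
\begin{itemize}
\item the coefficient formula $\partial^n_{\Gamma_s,z}\varphi(z)=\sum_j\frac{\Gamma(1+s(j+n))}{\Gamma(1+sj)}\varphi_{j+n}z^j$;
\item expanding $\mathbf{E}_s$, justified by $\mathbf{E}_s(x)\le Ce^{x^k}$ for $x\ge 0$;
\item the substitution $u=(w\zeta)^k$, which gives $\Gamma(1+s(n+j))\,w^{-n-j-1}$;
\item Cauchy's formula on $|w|=\varepsilon$.
\end{itemize}
With the uniform choice $\theta=-\arg w$ you get decay like $e^{-(\varepsilon^k-|z|^k)|\zeta|^k}$, and every interchange of sum and integral is then justified.

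The one assertion that does not hold is that, by Cauchy's theorem, the inner integral is independent of $\theta$ on the whole interval $(-\arg w-\frac{\pi}{2k},-\arg w+\frac{\pi}{2k})$. Your bound only gives decay along rays with $\cos(k(\theta+\arg w))>(|z|/\varepsilon)^k$. So Cauchy's theorem only lets you move the ray within that subinterval, and beyond it the integral can actually diverge.

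Already for $s=k=1$ the inner integral is $\int_0^{\infty(e^{i\theta})}\zeta^n e^{-(w-z)\zeta}\,d\zeta$, which converges if and only if $\mathrm{Re}\big((w-z)e^{i\theta}\big)>0$. Take $w=\varepsilon$, $z=i\varepsilon/2$ and $\theta=-\frac{\pi}{2}+0.1$, which lies inside the stated interval. Then this real part equals $\varepsilon(\cos\theta+\frac12\sin\theta)<0$, so the integral diverges.

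Hence the range of $\theta$ in the statement must be read as ``$\theta$ sufficiently close to $-\arg w$, depending on $|z|/\varepsilon$''. That corrected version is exactly what your argument proves. You should replace the independence claim by independence on the subinterval where $\cos(k(\theta+\arg w))>(|z|/\varepsilon)^k$. There the integrand decays uniformly on the connecting arcs, so the contour rotation is legitimate.
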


Using the above propositions we show
\begin{Thm}\label{th:3}
 We assume that $s>0$, $k=1/s$, $m$ is a sequence preserving summability satisfying (\ref{eq:aA}), $\tilde{m}=\Gamma_s m$ is a sequence of generalized moments of order $s$ and $\varphi\in\Oo(D_r)$ for some $r>0$. Then $\tilde{m}$-moment derivatives $\partial_{\tilde{m},z}^n$ of $\varphi$ have the following integral representation for every $n\in\NN_0$ and $|z|<\varepsilon<r/A$:
 \begin{equation}
  \partial_{\tilde{m},z}^n\varphi(z)
  =\frac{1}{2\pi i}\oint_{|w|=\varepsilon}(\Bo_{m^{-1}}\varphi)(w)\int_0^{\infty(e^{i\theta})}\zeta^n E_{\tilde{m}}(z\zeta)k(w\zeta)^{k-1}e^{-(w\zeta)^k}\, d\zeta\, dw,
 \end{equation}
 where $\theta\in(-\arg w - \frac{\pi}{2k},-\arg w + \frac{\pi}{2k})$ and $E_{\tilde{m}}(z)=\sum_{n=0}^{\infty}\frac{z^n}{\tilde{m}(n)}$.
\end{Thm}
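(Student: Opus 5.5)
The plan is to reduce the $\tilde{m}$-moment derivative to a $\Gamma_s$-type representation by conjugating with the Borel operator $\hat{\Bo}_{m^{-1}}$. Then we can run the same Laplace-type computation that underlies Proposition \ref{prop:integral_rep}, with $\mathbf{E}_s$ replaced by $E_{\tilde{m}}$.

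First, I will put $\psi:=\Bo_{m^{-1}}\varphi=\hat{\Bo}_{m^{-1}}\varphi$. By Proposition \ref{prop:integral}, $\psi\in\Oo(D_{r/A})$, so for $\varepsilon<r/A$ the circle $|w|=\varepsilon$ lies in its domain. If $\psi(w)=\sum_j\psi_jw^j$, then $\psi_j=m(j)\varphi_j$, where $\varphi(z)=\sum_j\varphi_jz^j$.

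Second, I will compute $\partial^n_{\tilde{m},z}\varphi$ termwise from the definition:
\[
\partial^n_{\tilde{m},z}\varphi(z)=\sum_{j\ge0}\frac{\tilde{m}(j+n)}{\tilde{m}(j)}\varphi_{j+n}z^j
=\sum_{j\ge0}\frac{\Gamma(1+s(j+n))}{\tilde{m}(j)}\psi_{j+n}z^j .
\]
Equivalently, by Proposition \ref{prop:commutate}, $\hat{\Bo}_{m^{-1}}\partial_{\tilde{m},z}=\partial_{\Gamma_s,z}\hat{\Bo}_{m^{-1}}$. This confirms that the $m$-factor is absorbed into $\psi$, while the $\tilde{m}(j)$ in the denominator is what produces $E_{\tilde{m}}$.

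Third, I will verify the right-hand side coefficientwise. Fix $w$ on the circle and choose $\theta$ as stated, so that $\mathrm{Re}(w\zeta)^k>0$ along the ray. Since $\tilde{m}(j)\ge a^j\Gamma(1+sj)$ by (\ref{eq:aA}), $E_{\tilde{m}}$ is entire of exponential order $k$ with type controlled by $1/a$. I will expand $E_{\tilde{m}}(z\zeta)=\sum_j z^j\zeta^j/\tilde{m}(j)$ and integrate termwise, using the Gamma integral
\[
\int_0^{\infty(e^{i\theta})}\zeta^{N}k(w\zeta)^{k-1}e^{-(w\zeta)^k}\,d\zeta=\Gamma(1+sN)\,w^{-N-1},
\]
obtained by substituting $u=(w\zeta)^k$ and rotating the contour. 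This turns the inner integral into $\sum_j\Gamma(1+s(j+n))z^jw^{-j-n-1}/\tilde{m}(j)$. The Cauchy integral $\frac{1}{2\pi i}\oint\psi(w)w^{-j-n-1}\,dw=\psi_{j+n}$ then gives exactly the series from the second step.

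The main obstacle is justifying the interchange of summation and integration. The termwise bound is $\sum_j\Gamma(1+s(j+n))|z|^j\varepsilon^{-j-n-1}/(a^j\Gamma(1+sj))$, and its convergence requires roughly $|z|<a\varepsilon$. The hypothesis is only $|z|<\varepsilon$, and in addition the kernel's type (which involves $1/a$) must be beaten by the decay of $e^{-(w\zeta)^k}$, which holds only for $|z|$ small compared with $\varepsilon$. I would therefore first prove the identity for small $|z|$, where dominated convergence applies (the integrand is bounded by $C e^{c|z\zeta|^k}e^{-\delta|w\zeta|^k}$). I would then extend it by analytic continuation in $z$, provided the right-hand side is holomorphic in $z$ on $D_\varepsilon$. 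If necessary, I would exploit the freedom in $\theta$ and use the fact that $E_{\tilde{m}}$ has the same exponential order as $\mathbf{E}_s$, mirroring the argument behind Proposition \ref{prop:integral_rep}, where the same issue arises with $\mathbf{E}_s$.
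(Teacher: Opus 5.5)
Your computation is correct, and your route differs from the paper's. Your second step is the paper's use of Proposition~\ref{prop:commutate}. Beyond that, the paper quotes Proposition~\ref{prop:integral_rep} for $\partial^n_{\Gamma_s,z}\tilde\varphi$, where $\tilde\varphi=\Bo_{m^{-1}}\varphi$. It then lets $\hat{\Bo}_m$, acting in $z$, pass under both integrals, which turns $\mathbf{E}_s(z\zeta)$ into $E_{\tilde m}(z\zeta)$. You instead check the right-hand side coefficientwise from scratch. Your formula $\partial^n_{\tilde m,z}\varphi(z)=\sum_j\Gamma(1+s(j+n))\psi_{j+n}z^j/\tilde m(j)$ is right. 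So are the Gamma integral $\int_0^{\infty(e^{i\theta})}\zeta^Nk(w\zeta)^{k-1}e^{-(w\zeta)^k}\,d\zeta=\Gamma(1+sN)w^{-N-1}$ and the Cauchy formula for $\psi_{j+n}$. Dominated convergence justifies the interchange for $|z|<a\varepsilon$, taking $\theta=-\arg w$. The paper's proof is shorter and shows the structural point: the generalized case is the $\Gamma_s$ case conjugated by $\hat{\Bo}_m$, as in Theorem~\ref{th:4}. Yours is self-contained, since it re-proves Proposition~\ref{prop:integral_rep} along the way, and it makes the region of validity explicit.

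That region matters, because your final step, analytic continuation to all of $|z|<\varepsilon$, cannot succeed: the right-hand side need not exist there. Take $m=(a^n)_{n\geq0}$ with $0<a<1$, so that (\ref{eq:aA}) holds with lower constant $a$ and $A=1$, and take $s=k=1$. Then $E_{\tilde m}(x)=e^{x/a}$, $\Bo_{m^{-1}}\varphi(w)=\varphi(aw)$ and $\partial^n_{\tilde m,z}\varphi=a^n\varphi^{(n)}$. The inner integral becomes $\int_0^{\infty(e^{i\theta})}\zeta^ne^{-(w-z/a)\zeta}\,d\zeta$. If $a\varepsilon<|z|<\varepsilon$ and $w=\varepsilon z/|z|$, then $w-z/a$ is a negative multiple of $w$, so this integral diverges for every admissible $\theta$. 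The freedom in $\theta$ therefore does not help. Continuing the inner integral in $w$ to $n!(w-z/a)^{-n-1}$ does not help either: it would make the $w$-integral equal $0$ rather than $a^n\varphi^{(n)}(z)$.

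So the formula genuinely holds only for $|z|<a\varepsilon$, and for a smaller radius when $\theta\neq-\arg w$. That is exactly what your small-$|z|$ step proves. The paper's proof has the same unstated restriction, because moving $\hat{\Bo}_m$ under the integrals is justified only where the resulting integrals converge. You should drop the continuation step and state the range explicitly, rather than try to repair it.
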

\begin{proof}
Let $\varphi\in\Oo(D_r)$. By Proposition \ref{prop:integral} $\tilde{\varphi}:=\Bo_{m^{-1}}\varphi\in\Oo(D_{r/A})$ and $\varphi=\hat{\Bo}_{m}\tilde{\varphi}$. Hence by Propositions \ref{prop:commutate} and \ref{prop:integral_rep} we conclude that 
\begin{multline*}
 \partial^n_{\tilde{m},z}\varphi(z)=\partial^n_{\Gamma_s m,z}\hat{\Bo}_{m}\tilde{\varphi}(z)=\hat{\Bo}_{m}\left(\partial^n_{\Gamma_s,z}\tilde{\varphi}(z)\right)\\
 =
 \hat{\Bo}_{m}\left(\frac{1}{2\pi i}\oint_{|w|=\varepsilon}\tilde{\varphi}(w)\int_0^{\infty(e^{i\theta})}\zeta^n\mathbf{E}_s(z\zeta)k(w\zeta)^{k-1}e^{-(w\zeta)^k}\,d\zeta\, dw\right)\\
 =\frac{1}{2\pi i}\oint_{|w|=\varepsilon}\tilde{\varphi}(w)\int_0^{\infty(e^{i\theta})}\zeta^nE_{\Gamma_sm}(z\zeta)k(w\zeta)^{k-1}e^{-(w\zeta)^k}\,d\zeta\, dw\\
 =\frac{1}{2\pi i}\oint_{|w|=\varepsilon}\Bo_{m^{-1}}\varphi(w)\int_0^{\infty(e^{i\theta})}\zeta^nE_{\tilde{m}}(z\zeta)k(w\zeta)^{k-1}e^{-(w\zeta)^k}\,d\zeta\, dw,
\end{multline*}
where $\theta\in(-\arg w - \frac{\pi}{2k},-\arg w + \frac{\pi}{2k})$.
\end{proof}

\section{Summability of formal solutions of some moment-pdes}\label{sec:10}
We will study formal solutions of some moment partial differential equations for sequences of generalized moments. To this end we need
 \begin{Lem}\label{le:1}
  Fix $k>0$ and $d\in\RR$. A function $u(t,z)=\sum_{n=0}^{\infty}u_n(t)z^n$ belongs to the space $\Oo^k(\hat{S}_d\times D)$ if and only if the following conditions hold:
  \begin{enumerate}
   \item[(i)] $u_n(t)\in\Oo^k(\hat{S}_d)$ for every $n\in\NN_0$,
   \item[(ii)] there exists a positive constant $B<\infty$ such that for every sub-disc-sector $\hat{\tilde{S}}_d$ of $\hat{S}_d$ one can find positive constants $A,C<\infty$ such that
   \begin{equation}\label{eq:abc}
    |u_n(t)|\leq AB^ne^{C|t|^k}\quad\text{for every}\quad t\in\hat{\tilde{S}}_d\quad\text{and}\quad n\in\NN_0.
   \end{equation}
  \end{enumerate}
 \end{Lem}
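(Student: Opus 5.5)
The plan is to prove the two implications separately, with the Cauchy integral formula in $z$ doing the work in both directions. We use the Hartogs identification $\Oo(\hat{S}_d\times D)=\Oo(\hat{S}_d,\Oo_r)$ from Section~\ref{sec:2}, so $u\in\Oo^k(\hat{S}_d\times D)$ means the following: $u$ is holomorphic on $\hat{S}_d\times D_r$ for some $r>0$, and on every sub-disc-sector it satisfies $\sup_{|z|\le r'}|u(t,z)|\le Ae^{C|t|^k}$ for some fixed $r'<r$.

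\emph{Necessity.} Assume $u\in\Oo^k(\hat{S}_d\times D_r)$ and fix $\rho\in(0,r)$. Cauchy's formula gives
\[
u_n(t)=\frac{1}{2\pi i}\oint_{|z|=\rho}\frac{u(t,z)}{z^{n+1}}\,dz .
\]
Differentiating under the integral shows that each $u_n$ is holomorphic in $t\in\hat{S}_d$. For a sub-disc-sector $\hat{\tilde{S}}_d$, the growth bound on $u$ yields $|u_n(t)|\le \sup_{|z|=\rho}|u(t,z)|\,\rho^{-n}\le Ae^{C|t|^k}\rho^{-n}$. This gives (i) and also (ii), with $B:=1/\rho$. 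Since $\rho$ does not depend on the sub-disc-sector, $B$ is uniform, exactly as (\ref{eq:abc}) requires; only $A$ and $C$ depend on $\hat{\tilde{S}}_d$.

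\emph{Sufficiency.} Assume (i) and (ii), and set $r:=1/(2B)$. On $\hat{\tilde{S}}_d\times D_r$ we have $|u_n(t)z^n|\le Ae^{C|t|^k}2^{-n}$. The series $\sum_n u_n(t)z^n$ therefore converges uniformly on compact subsets of $\hat{\tilde{S}}_d\times D_r$. Each term is holomorphic by (i), so the sum $u$ is holomorphic there by the Weierstrass theorem, and it satisfies $|u(t,z)|\le 2Ae^{C|t|^k}$. Every compact subset of $\hat{S}_d$ lies in some sub-disc-sector, so $u$ is holomorphic on all of $\hat{S}_d\times D_r$. The bound then shows that $u\in\Oo^k(\hat{S}_d\times D)$.

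The only delicate point is bookkeeping rather than analysis. The disc radius $r$ in the definition of $\Oo^k(\hat{S}_d\times D)$ must not shrink as we pass to smaller sub-disc-sectors, and this is what forces $B$ in (ii) to be independent of $\hat{\tilde{S}}_d$. In the necessity part this is handled by fixing $\rho$ once, before choosing the sub-disc-sector. In the sufficiency part the uniformity of $B$ is precisely what provides one common radius $1/(2B)$ valid on every sub-disc-sector.
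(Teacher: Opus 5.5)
Your proof is correct and follows the paper's argument: Cauchy estimates on a fixed circle $|z|=\rho$ with $B=1/\rho$ for necessity, and geometric-series domination of $\sum_n u_n(t)z^n$ on each sub-disc-sector for sufficiency. The only difference is cosmetic. You use the radius $1/(2B)$, which gives the uniform bound $2Ae^{C|t|^k}$; the paper works on $D_{1/B}$ with the non-uniform bound $Ae^{C|t|^k}/(1-B|z|)$, which implicitly requires shrinking the disc afterwards.
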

 \begin{proof}
 ($\Longrightarrow$) 
 Since $u(t,z)\in\Oo^k(\hat{S}_d\times D)$, we see that
 \begin{equation*}
  u_n(t)=\frac{\partial_z^n u(t,z)|_{z=0}}{n!}\in\Oo^k(\hat{S}_d)\quad\text{for every}\quad n\in\NN_0.
 \end{equation*}
Moreover, by its integral representation we get
\begin{equation*}
 u_n(t)=\frac{1}{2\pi i}\oint_{|w|=\varepsilon}\frac{u(t,w)}{w^{n+1}}\,dw\quad\text{for sufficiently small}\quad \varepsilon>0.
\end{equation*}
Hence for $B=1/\varepsilon$ we get
\begin{equation*}
 |u_n(t)|\leq \frac{\sup_{w\in D_{\varepsilon}}|u(t,w)|}{\varepsilon^n}\leq \sup_{w\in D_{\varepsilon}}|u(t,w)|B^n\quad\text{for every}\quad n\in\NN_0.
\end{equation*}
Since $u(t,w)\in\Oo^k(\hat{S}_d\times D)$, for every sub-disc-sector $\hat{\tilde{S}}_d$ of $\hat{S}_d$ one can find positive constants $A,C<\infty$ such that
\begin{equation*}
 \sup_{w\in D_{\varepsilon}}|u(t,w)|\leq Ae^{C|t|^k}\quad\text{for}\quad t\in \hat{\tilde{S}}_d,
\end{equation*}
and (ii) holds.

($\Longleftarrow$) To prove the second implication, take  $B<\infty$ from (ii) and fix a sub-disc-sector $\hat{\tilde{S}}_d$ of $\hat{S}_d$. By (ii) there exist $A,C<\infty$ such that (\ref{eq:abc}) holds. Then for every $(t,z)\in\hat{\tilde{S}}_d\times D_{1/B}$ we have
  \begin{equation*}
   \left|\sum_{n=0}^{\infty}u_n(t)z^n\right|\leq \sum_{n=0}^{\infty}|u_n(t)||z|^n\leq Ae^{C|t|^k}\sum_{n=0}^{\infty}B^n|z|^n\leq \frac{Ae^{C|t|^k}}{1-B|z|}<\infty.
  \end{equation*}
It means that the series $\sum_{n=0}^{\infty}u_n(t)z^n$ is convergent on $\hat{\tilde{S}}_d\times D_{1/B}$ and it sum $u(t,z)$ belongs to the space $\Oo^k(\hat{\tilde{S}}_d\times D)$ with $D=D_{1/B}$. By the freedom of choice of a sub-disc-sector $\hat{\tilde{S}}_d$, we conclude that $u(t,z)$ belongs also to the space $\Oo^k(\hat{S}_d\times D)$.
 \end{proof}

 \begin{Lem}\label{le:2}
  Let $a>0$ and $m=(m(n))_{n\geq0}$ be a sequence of positive numbers satisfying $m(n)\geq a^n$ for every $n\in\NN_0$. We also assume that a function $u(t,z)=\sum_{n=0}^{\infty}u_n(t)z^n$ belongs to the space $\Oo^k(\hat{S}_d\times D)$ for some $k>0$ and $d\in\RR$. Then there exists 
 a complex disc $\tilde{D}$ such that $\hat{\Bo}_{m,z}u(t,z)\in\Oo^k(\hat{S}_d\times \tilde{D})$.
 \end{Lem}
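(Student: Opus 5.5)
The natural route is Lemma~\ref{le:1}, used once in each direction. Since $u\in\Oo^k(\hat{S}_d\times D)$, the implication ($\Longrightarrow$) of Lemma~\ref{le:1} gives two facts. First, every coefficient $u_n(t)$ belongs to $\Oo^k(\hat{S}_d)$. Second, there is a constant $B<\infty$ such that for every sub-disc-sector $\hat{\tilde{S}}_d$ of $\hat{S}_d$ we can find $A,C<\infty$ with
\begin{equation*}
|u_n(t)|\leq AB^ne^{C|t|^k}\quad\text{for}\quad t\in\hat{\tilde{S}}_d,\ n\in\NN_0.
\end{equation*}
By Definition~\ref{df:borel}, applied in the variable $z$ with $t$ treated as a parameter, we have
\begin{equation*}
\hat{\Bo}_{m,z}u(t,z)=\sum_{n=0}^{\infty}\frac{u_n(t)}{m(n)}z^n .
\end{equation*}
So the coefficients of the transformed series are $v_n(t):=u_n(t)/m(n)$.

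The goal is to check conditions (i) and (ii) of Lemma~\ref{le:1} for the coefficients $v_n$.

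\emph{Condition (i).} Each $v_n$ is a positive constant multiple of $u_n$, hence $v_n\in\Oo^k(\hat{S}_d)$.

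\emph{Condition (ii).} The hypothesis $m(n)\geq a^n$ gives $1/m(n)\leq a^{-n}$. Combining this with the bound on $u_n$, for every sub-disc-sector $\hat{\tilde{S}}_d$ we get
\begin{equation*}
|v_n(t)|\leq A(B/a)^ne^{C|t|^k}\quad\text{for}\quad t\in\hat{\tilde{S}}_d,\ n\in\NN_0 .
\end{equation*}
Here $A$ and $C$ are the same constants as for $u_n$. The constant $B/a$ does not depend on the choice of sub-disc-sector, which is exactly the uniformity that (ii) requires.

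The implication ($\Longleftarrow$) of Lemma~\ref{le:1} then gives $\hat{\Bo}_{m,z}u\in\Oo^k(\hat{S}_d\times\tilde{D})$. From the proof of that lemma, one may take $\tilde{D}=D_{a/B}$.

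I do not expect a real obstacle here. The only point that needs care is that the geometric rate $B$ in Lemma~\ref{le:1}(ii) is chosen independently of the sub-disc-sector, and the bound $m(n)\ge a^n$ keeps it uniform. The price of this step is that the disc may shrink, from radius $1/B$ to radius $a/B$.
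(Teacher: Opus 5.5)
Your proposal is correct and follows essentially the same route as the paper. The paper also passes to $\tilde u_n(t)=u_n(t)/m(n)$, checks conditions (i) and (ii) of Lemma~\ref{le:1} with $B$ replaced by $B/a$, and applies the converse implication of that lemma.
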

\begin{proof}
 If $u(t,z)\in\Oo^k(\hat{S}_d\times D)$ then by Lemma \ref{le:1} functions $u_n(t)$ satisfy conditions (i) and (ii) from Lemma \ref{le:1}. If we put $\tilde{u}_n(t):=u_n(t)/m(n)$ for $n\in\NN_0$ then we see that also functions $\tilde{u}_n(t)$ satisfy (i) and (ii) from Lemma \ref{le:1} with $B$ replaced by $B/a$. Hence, also by Lemma \ref{le:1} we conclude that $\hat{\Bo}_{m,z}u(t,z)=\sum_{n=0}^{\infty}\tilde{u}_n(t)z^n\in\Oo^k(\hat{S}_d\times \tilde{D})$ for some complex disc $\tilde{D}$.
\end{proof}

Now we are ready to formulate our problem.
Since by Definition \ref{df:summable} $\hat{u}\in\Oo\{t\}_{k,d}$ if and only if $\hat{\Bo}_{\Gamma_{1/k},t}\hat{u}\in\Oo^k(\hat{S}_d\times D)$, directly by Lemma \ref{le:2} we get
\begin{Prop}\label{pr:sum}
 Fix $k>0$ and $d\in\RR$. If a sequence $m=(m(n))_{n\geq 0}$ satisfies (\ref{eq:aA}) then $\hat{u}(t,z)\in\Oo\{t\}_{k,d}$ if and only if $\hat{\Bo}_{m,z}\hat{u}(t,z)\in\Oo\{t\}_{k,d}$. 
\end{Prop}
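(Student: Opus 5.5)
The plan is to reduce both directions to Lemma \ref{le:2} by first commuting the two Borel operators. Write $\hat{u}(t,z)=\sum_{j,n\geq0}u_{jn}t^jz^n$. The operator $\hat{\Bo}_{\Gamma_{1/k},t}$ divides $u_{jn}$ by $\Gamma(1+j/k)$, and $\hat{\Bo}_{m,z}$ divides it by $m(n)$. Both act diagonally on the double coefficients, so they commute on $\Oo[[t]]$:
\[
\hat{\Bo}_{\Gamma_{1/k},t}\hat{\Bo}_{m,z}\hat{u}=\hat{\Bo}_{m,z}\hat{\Bo}_{\Gamma_{1/k},t}\hat{u}.
\]
Here we regard $\hat{u}\in\Oo[[t]]$, that is, each $t$-coefficient is holomorphic in a common disc $D_r$ in $z$. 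Then $\hat{\Bo}_{m,z}\hat{u}$ is again in $\Oo[[t]]$, because the upper bound $m(n)\geq a^n$ from (\ref{eq:aA}) shrinks the disc at most to $D_{ar}$. So the statement makes sense.

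\textbf{Forward direction.} Assume $\hat{u}\in\Oo\{t\}_{k,d}$. By Definition \ref{df:summable}, $v:=\hat{\Bo}_{\Gamma_{1/k},t}\hat{u}\in\Oo^k(\hat{S}_d\times D)$, using the Hartogs identification $\Oo(G,\Oo)=\Oo(G\times D)$ from Section \ref{sec:2}. The lower bound $m(n)\geq a^n$ is exactly the hypothesis of Lemma \ref{le:2}, so $\hat{\Bo}_{m,z}v\in\Oo^k(\hat{S}_d\times\tilde{D})$ for some disc $\tilde{D}$. By the commutation above, $\hat{\Bo}_{m,z}v=\hat{\Bo}_{\Gamma_{1/k},t}(\hat{\Bo}_{m,z}\hat{u})$, and therefore $\hat{\Bo}_{m,z}\hat{u}\in\Oo\{t\}_{k,d}$.

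\textbf{Converse direction.} Apply the same argument to $m^{-1}$. The upper bound $m(n)\leq A^n$ gives $m^{-1}(n)\geq A^{-n}$, so Lemma \ref{le:2} applies to $m^{-1}$ with constant $1/A$. Since $\hat{\Bo}_{m^{-1},z}\hat{\Bo}_{m,z}=\mathrm{id}$, if $\hat{\Bo}_{m,z}\hat{u}\in\Oo\{t\}_{k,d}$ then $\hat{u}=\hat{\Bo}_{m^{-1},z}(\hat{\Bo}_{m,z}\hat{u})\in\Oo\{t\}_{k,d}$.

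\textbf{Main obstacle.} The only delicate point is bookkeeping. The coefficient identification must be legitimate in $\Oo^k(\hat{S}_d\times D)$: the $z$-Taylor coefficients of $\hat{\Bo}_{\Gamma_{1/k},t}\hat{u}$ must be exactly the functions $u_n(t)$ to which Lemma \ref{le:1} and Lemma \ref{le:2} apply. One also has to allow the disc in $z$ to shrink, which the definition of $\Oo\{t\}_{k,d}$ permits since $D$ is unspecified. I would check this by expanding $v(t,z)=\sum_n v_n(t)z^n$ and observing that $v_n=\hat{\Bo}_{\Gamma_{1/k}}\big(\sum_j u_{jn}t^j\big)$ as formal series in $t$. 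After that, everything follows directly from Lemma \ref{le:2}.
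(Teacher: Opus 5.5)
Your proposal is correct and is essentially the paper's argument, which simply combines Definition \ref{df:summable} (with $\EE=\Oo$) and Lemma \ref{le:2}; you make explicit the commutation $\hat{\Bo}_{\Gamma_{1/k},t}\hat{\Bo}_{m,z}=\hat{\Bo}_{m,z}\hat{\Bo}_{\Gamma_{1/k},t}$ and the converse via Lemma \ref{le:2} applied to $m^{-1}$ (using $m(n)\leq A^n$), both of which the paper leaves implicit. The only slip is verbal: in your first paragraph, $m(n)\geq a^n$ is the lower bound in (\ref{eq:aA}), not the upper one.
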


We assume that $m_1$ and $m_2$ are sequences preserving summability, $s_1,s_2\geq 0$, $\tilde{m}_1=m_1\Gamma_{s_1}$ and $\tilde{m}_2=m_2\Gamma_{s_2}$ are sequences of generalized moments  and $P(\lambda,\zeta)$ is a general polynomial of two variables of order $p$ with respect to $\lambda$ and $\varphi_j(z)\in\Oo(D)$ for $j=0,\dots,p-1$.

We will investigate the relationship between the solution $\hat{u}\in\Oo[[t]]$ of the Cauchy problem
\begin{equation}
 \label{eq:CP_u}
    \left\{
    \begin{array}{l}
     P(\partial_{\tilde{m}_1,t},\partial_{\tilde{m}_2,z})u=0\\
     \partial^j_{\tilde{m}_1,t}u(0,z)=\varphi_j(z),\quad j=0,\dots,p-1
    \end{array}
    \right.
 \end{equation}
and the solution $\hat{v}\in\Oo[[t]]$ of the similar initial value problem of the form
\begin{equation}
\label{eq:CP_tilde_u}
    \left\{
    \begin{array}{l}
     P(\partial_{\Gamma_{s_1},t},\partial_{\Gamma_{s_2},z})v=0\\
     \partial^j_{\Gamma_{s_1},t}v(0,z)=\Bo^{-1}_{m_2,z}\varphi_j(z),\quad j=0,\dots,p-1.
    \end{array}
    \right.
 \end{equation}

\begin{Thm}\label{th:4} The formal power series
 $\hat{u}\in\Oo[[t]]$ is a formal solution of (\ref{eq:CP_u}) if and only if $\hat{v}=\hat{\Bo}_{m^{-1}_1,t}\hat{\Bo}_{m^{-1}_2,z}\hat{u}\in\Oo[[t]]$ is a formal solution of (\ref{eq:CP_tilde_u}). Moreover,  the equivalence $\hat{u}\in\Oo\{t\}_{k,d}$ if and only if $\hat{v}\in\Oo\{t\}_{k,d}$ holds for every $k>0$ and $d\in\RR$.
\end{Thm}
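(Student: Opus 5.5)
The proof splits into a purely formal part and a summability part.

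\emph{Formal equivalence.} Apply Proposition~\ref{prop:commutate} in each variable separately. Since $\hat{\Bo}_{m_1^{-1},t}$ acts only in $t$, it commutes with $\partial_{\tilde{m}_2,z}$. Taking $m_1^{-1}$ and $\tilde{m}_1=m_1\Gamma_{s_1}$ in the proposition gives
\[
\hat{\Bo}_{m_1^{-1},t}\,\partial_{\tilde{m}_1,t}=\partial_{\Gamma_{s_1},t}\,\hat{\Bo}_{m_1^{-1},t}.
\]
In the same way, $\hat{\Bo}_{m_2^{-1},z}$ commutes with the $t$-operators and satisfies
\[
\hat{\Bo}_{m_2^{-1},z}\,\partial_{\tilde{m}_2,z}=\partial_{\Gamma_{s_2},z}\,\hat{\Bo}_{m_2^{-1},z}.
\]
It follows that $T:=\hat{\Bo}_{m_1^{-1},t}\hat{\Bo}_{m_2^{-1},z}$ intertwines $P(\partial_{\tilde{m}_1,t},\partial_{\tilde{m}_2,z})$ with $P(\partial_{\Gamma_{s_1},t},\partial_{\Gamma_{s_2},z})$. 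Since $T$ is a linear bijection on formal series (it is inverted by the $\hat{\Bo}_{m_1}$, $\hat{\Bo}_{m_2}$ operators), $P(\partial_{\tilde{m}_1,t},\partial_{\tilde{m}_2,z})\hat{u}=0$ holds if and only if $P(\partial_{\Gamma_{s_1},t},\partial_{\Gamma_{s_2},z})\hat{v}=0$.

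For the initial data, write $\hat{u}=\sum_n u_n(z)t^n$. Then $\partial^j_{\tilde{m}_1,t}\hat{u}(0,z)=\tilde{m}_1(j)u_j(z)$. On the other side $\hat{v}=\sum_n \hat{\Bo}_{m_2^{-1},z}u_n(z)\,m_1(n)t^n$, so
\[
\partial^j_{\Gamma_{s_1},t}\hat{v}(0,z)=\Gamma_{s_1}(j)\,m_1(j)\,\hat{\Bo}_{m_2^{-1},z}u_j(z)=\hat{\Bo}_{m_2^{-1},z}\bigl(\partial^j_{\tilde{m}_1,t}\hat{u}(0,z)\bigr).
\]
Hence the conditions $\partial^j_{\tilde{m}_1,t}\hat{u}(0,z)=\varphi_j$ correspond exactly to the initial conditions in (\ref{eq:CP_tilde_u}), where $\Bo^{-1}_{m_2,z}\varphi_j=\Bo_{m_2^{-1},z}\varphi_j$ is holomorphic on a smaller disc by Proposition~\ref{prop:integral}.

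We also have to check that $T$ preserves $\Oo[[t]]$, i.e.\ that coefficients stay in $\Oo(D)$, possibly on a smaller disc. This follows from (\ref{eq:aA}) for $m_2$ (Theorem~\ref{th:1}(a)). The one subtlety is uniformity: all coefficients $u_n$ must live on a common disc. Since $\hat{\Bo}_{m_2^{-1},z}$ shrinks the radius by the fixed factor $1/A$, every $u_n\in\Oo(D_r)$ is mapped into $\Oo(D_{r/A})$, which is enough.

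\emph{Summability.} Split $T$ into its two factors. Proposition~\ref{pr:sum} applied to $m_2^{-1}$ (which satisfies (\ref{eq:aA}) with constants $1/A,1/a$) gives $\hat{u}\in\Oo\{t\}_{k,d}$ if and only if $\hat{\Bo}_{m_2^{-1},z}\hat{u}\in\Oo\{t\}_{k,d}$.

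For the $t$-factor I need the $\EE$-valued version of Definition~\ref{df:preserve} with $\EE=\Oo_\rho$: since $m_1^{-1}$ preserves summability by Theorem~\ref{th:1}(b), $\hat{w}\in\Oo_\rho\{t\}_{k,d}$ if and only if $\hat{\Bo}_{m_1^{-1},t}\hat{w}\in\Oo_\rho\{t\}_{k,d}$. Here $\Oo\{t\}_{k,d}$ means summability with values in $\Oo_\rho$ for some $\rho>0$. Definition~\ref{df:preserve} allows an arbitrary Banach space $\EE$, so this is immediate. Alternatively, Proposition~\ref{prop:5} and Remark~\ref{re:summable} give the same conclusion directly, using the integral representation (\ref{eq:deform}) with $E_m$ applied pointwise in $z$.

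Composing the two equivalences yields $\hat{u}\in\Oo\{t\}_{k,d}$ if and only if $\hat{v}\in\Oo\{t\}_{k,d}$. The only real obstacle is bookkeeping of the shrinking discs in $z$: each factor may shrink the disc, but only by a fixed amount, and that is harmless for membership in $\Oo\{t\}_{k,d}$.
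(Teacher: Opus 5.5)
Your proposal is correct and follows essentially the same route as the paper. The formal equivalence comes from Proposition~\ref{prop:commutate} applied in each variable together with the invertibility of the Borel operators, and the summability equivalence comes from the summability-preserving property in $t$ (with $\EE=\Oo$) combined with Proposition~\ref{pr:sum} in $z$. Your explicit check of the initial data and of the disc radii, and your use of $m_1^{-1},m_2^{-1}$ instead of the paper's $m_1,m_2$, only fill in details the paper leaves implicit; the two choices are equivalent since the operators are mutually inverse bijections.
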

\begin{proof}
 Let $\hat{u}\in\Oo[[t]]$ be a formal solution of (\ref{eq:CP_u}). Applying Borel transforms $\hat{\Bo}_{m^{-1}_1,t}\hat{\Bo}_{m^{-1}_2,z}$ to both sides of (\ref{eq:CP_u}) and using Proposition \ref{prop:commutate} we get 
 \begin{equation*}
  \hat{\Bo}_{m^{-1}_1,t}\hat{\Bo}_{m^{-1}_2,z}P(\partial_{m_1\Gamma_{s_1},t},\partial_{m_2\Gamma_{s_2},z})u=P(\partial_{\Gamma_{s_1},t},\partial_{\Gamma_{s_2},z})\hat{\Bo}_{m^{-1}_1,t}\hat{\Bo}_{m^{-1}_2,z}u=P(\partial_{\Gamma_{s_1},t},\partial_{\Gamma_{s_2},z})v=0
 \end{equation*}
and
\begin{equation*}
 \hat{\Bo}_{m^{-1}_1,t}\hat{\Bo}_{m^{-1}_2,z}\partial^j_{m_1\Gamma_{s_1},t}u(0,z)=\partial^j_{\Gamma_{s_1},t}\hat{\Bo}_{m^{-1}_1,t}\hat{\Bo}_{m^{-1}_2,z}u(0,z)=\partial^j_{\Gamma_{s_1},t} v(0,z)=\Bo_{m^{-1}_2,z}\varphi_j(z).
\end{equation*}
So $\hat{v}$ is a formal solution of (\ref{eq:CP_tilde_u}). Analogously, applying Borel transforms $\hat{\Bo}_{m_1,t}\hat{\Bo}_{m_2,z}$ to both sides of (\ref{eq:CP_tilde_u}) we conclude that if $\hat{v}$ is a formal solution of (\ref{eq:CP_tilde_u}) then $\hat{u}=\hat{\Bo}_{m_1,t}\hat{\Bo}_{m_2,z}\hat{v}$ is a formal solution of (\ref{eq:CP_u}).

To prove the second part of the theorem, first observe that since $m_1$ is a sequence preserving summability, then $\hat{u}\in\Oo\{t\}_{k,d}$ if and only if $\hat{\Bo}_{m_1,t}\hat{u}\in\Oo\{t\}_{k,d}$.
To finish the proof it is sufficient to show that
if $m_2$ is a sequence preserving summability, then $\hat{u}\in\Oo\{t\}_{k,d}$ if and only if $\hat{\Bo}_{m_2,z}\hat{u}\in\Oo\{t\}_{k,d}$. But it follows directly from Proposition \ref{pr:sum}, since by Theorem \ref{th:1} a sequence preserving summability $m_2$ satisfies (\ref{eq:aA}).
\end{proof}
\begin{Rem} By the above proof we see that in Theorem \ref{th:4} one can replace the assumption that a sequence $m_2$ preserves summability by a weaker one that $m_2$ satisfies (\ref{eq:aA}).
 \end{Rem}
  
\section{Singular directions for Borel sums in terms of hyperfunctions}\label{sec:11}
Now we recall the concept of the Stokes phenomenon for summable (see \cite[Definition 7]{M-P}) and multisummable (see \cite[Section 5]{M-T}) formal power series
$\hat{u}\in\EE[[t]]$.
\begin{Def}
\label{df:stokes}
If $\hat{u}\in\EE\{t\}_k$ has singular directions
$d_1,\dots,d_n$ then every half-line
${L}_{d_{j}}=\{t\in\widetilde{\CC}\colon \arg t=d_{j}\}$ ($j=1,\dots,n$)
is called a \emph{Stokes line} for $\hat{u}$.

We assume that $d_j^+$ (resp. $d_j^-$) denotes a direction close to $d_j$ and greater (resp. less) than $d_j$ and $u^{d_j^{\pm}}:=\Ss_{k,d_j^{\pm}}\hat{u}$.
Then the difference $J_{L_{d_j},k}\hat{u}:= u^{d_j^+}-u^{d_j^-}$ is called a \emph{jump for $\hat{u}$ across
the Stokes line $L_{d_j}$} or a \emph{jump for $\hat{u}$ across
the singular direction $d_j$}.
\end{Def}

%

We will describe jumps across the Stokes lines in terms of hyperfunctions. The similar approach to the Stokes phenomenon one can find
in \cite{Im, Mal3, M-T, S-S}. For more information about the theory of hyperfunctions we refer the reader to \cite{Kaneko}.

We will consider the space 
$$\mathcal{H}^k(L_d):=\Oo^k(D\cup (S_d\setminus L_d))\Big/\Oo^k(\hat{S}_d)$$
of Laplace type hyperfunctions supported by $L_d$ with exponential growth of order $k$ (see also \cite{S-Z}). It means that every hyperfunction
$G\in\mathcal{H}^k(L_d)$ may be written as
\[
G(s)=[g(s)]_{d}=\{g(s)+h(s)\colon h(s)\in\Oo^{k}(\hat{S}_d)\}
\]
for some defining function $g(s)\in\Oo^k(D\cup (S_d\setminus L_d))$. 

Let $\gamma_{d}$ be a path consisting of the half-lines from 
$e^{id^-}\infty$ to $0$ and from $0$ to $e^{id^+}\infty$, i.e.
$\gamma_{d}=-\gamma_{d^-}+\gamma_{d^+}$ with $\gamma_{d^{\pm}}={L}_{d^{\pm}}$.
By the K\"othe type theorem \cite{Kot} (see also \cite[Theorem 5.1]{S-Z}) one can treat the hyperfunction $G(s)=[g(s)]_d$ as the analytic functional defined by
\begin{gather}
\label{eq:kothe}
G(s)[\varphi(s)]:=\int_{\gamma_d}g(s)\varphi(s)\,ds,
\end{gather}
for small functions $\varphi\in\Oo(\widehat{S}_d)$ exponentially decay of order $k$ at infinity  such that
the function $s\mapsto g(s)\varphi(s)$ is also exponentially decay of order $k$ on $D\cup (S_d\setminus L_d)$.

For given sequence of generalized moments we can describe the jumps across the Stokes lines in terms of hyperfunctions in th following way
\begin{Prop}\label{prop:stokes}
 Let $k>0$ and $d\in\RR$. We assume that $\hat{f}\in\CC[[t]]$ is $k$-summable, $\tilde{m}=\Gamma_{1/k}m$ is a sequence of generalized moments of order $1/k$ and $d$ is a singular direction. Then
\begin{equation*}
 J_{L_d}\hat{f}(t)=f^{d^+}(t)-f^{d^-}(t)=\big(\mathcal{L}_{\tilde{m},d^+}-\mathcal{L}_{\tilde{m},d^-}\big)\hat{\Bo}_{\tilde{m}}\hat{f}(t)=G_0(s)\left[\frac{k(s/t)^ke^{-(s/t)^k}}{s}\right]
\end{equation*}
for sufficiently small $r>0$ and $t\in S_{d}(\frac{\pi}{k}, r)$, where
the hyperfunction $G_0(s)$ is defined by
\begin{equation*}
 G_0(s)=\left[\Bo_{m^{-1}}(\hat{\Bo}_{\tilde{m}}\hat{f}(s))\right]_d=
 \left[\frac{1}{2\pi i}\oint_{\tilde{\gamma}_{s,d}}\frac{\hat{\Bo}_{\tilde{m}}\hat{f}(\zeta)}{\zeta}e_m\left(\frac{s}{\zeta}\right)\,d\zeta\right]_d,
\end{equation*}
where the contour $\tilde{\gamma}_{s,d}$ is defined in Proposition \ref{prop:5}.
\end{Prop}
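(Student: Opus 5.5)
The plan is to reduce everything to the classical Laplace transform of order $k$ applied to the single function $g(s):=\Bo_{m^{-1}}(\hat{\Bo}_{\tilde{m}}\hat{f})(s)$. First, by Remark \ref{re:sum}, applied with the directions $d^{\pm}$ (in which $\hat{f}$ is $k$-summable), we have $f^{d^\pm}=\La_{\tilde{m},d^\pm}\hat{\Bo}_{\tilde{m}}\hat{f}$. This gives the second equality in the statement. By the definition (\ref{eq:like}), $\La_{\tilde{m},d^\pm}=\La_{\Gamma_{1/k},d^\pm}\circ\Bo_{m^{-1}}$. So the jump equals $(\La_{\Gamma_{1/k},d^+}-\La_{\Gamma_{1/k},d^-})g(t)$, where $g$ is the same function in both terms. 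The representation of $g$ via the contour integral with $e_m$ is exactly Proposition \ref{prop:5}, i.e. formula (\ref{eq:deform}).

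Next we establish the regularity of $g$ needed for $[g]_d$ to be a hyperfunction in $\mathcal{H}^k(L_d)$. Write $v:=\hat{\Bo}_{\tilde{m}}\hat{f}=\hat{\Bo}_m\hat{\Bo}_{\Gamma_{1/k}}\hat{f}$. Since $\hat{f}$ is $k$-summable with finitely many singular directions, $w:=\hat{\Bo}_{\Gamma_{1/k}}\hat{f}$ is convergent near $0$. It also lies in $\Oo^k(\hat{S}_{d'})$ for all nonsingular $d'$, hence $w\in\Oo^k(D\cup(S_d\setminus L_d))$ for a narrow sector $S_d$ containing no other singular direction. By Propositions \ref{prop:integral} and \ref{prop:5}, applied on the disc-sectors $\hat{S}_{d^\pm}$ (which cover $D\cup(S_d\setminus L_d)$), $v=\Bo_m w$ belongs to the same space. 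Then $g=\Bo_{m^{-1}}v=w$ on $D$, and by analytic continuation $g=w$ on $D\cup(S_d\setminus L_d)$, since $\Bo_{m^{-1}}\Bo_m=\mathrm{id}$ on convergent series. So $g\in\Oo^k(D\cup(S_d\setminus L_d))$, and $G_0=[g]_d$ is well defined. Changing $g$ by an element $h\in\Oo^k(\hat{S}_d)$ does not change the final expression, because $\int_{\gamma_d}h\varphi\,ds=0$ by Cauchy's theorem.

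Finally we compute the Laplace difference. Using the second form of (\ref{eq:lap}),
\[
(\La_{\Gamma_{1/k},d^+}-\La_{\Gamma_{1/k},d^-})g(t)=\Big(\int_{\gamma_{d^+}}-\int_{\gamma_{d^-}}\Big)g(s)\,\frac{k(s/t)^ke^{-(s/t)^k}}{s}\,ds=\int_{\gamma_d}g(s)\varphi_t(s)\,ds ,
\]
with $\varphi_t(s):=k(s/t)^ke^{-(s/t)^k}/s$. By (\ref{eq:kothe}) this is $G_0(s)[\varphi_t(s)]$.

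The main obstacle is to justify the admissibility of the test function $\varphi_t$ uniformly on both rays. We need $\operatorname{Re}(s/t)^k\ge c|s/t|^k$ for $\arg s$ near $d$. This holds when $|\arg t-d|<\pi/(2k)$, i.e. $t\in S_d(\pi/k)$, once $d^\pm$ are chosen close enough to $d$ (depending on $t$, or uniformly on subsectors). We also need $|t|<r$ small enough that $e^{-c|s|^k/|t|^k}$ dominates the growth $e^{C|s|^k}$ of $g$. This smallness of $|t|$ is where the radius $r$ arises. The near-origin behaviour is harmless, since $\varphi_t(s)=O(|s|^{k-1})$ is integrable at $0$ and $g$ is bounded there. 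I would handle these estimates directly from the growth bounds and Definition \ref{def:LB_operators}.
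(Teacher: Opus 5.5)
Your proposal is correct and follows essentially the same route as the paper. You rewrite the jump via (\ref{eq:like}) as $(\La_{\Gamma_{1/k},d^+}-\La_{\Gamma_{1/k},d^-})$ applied to $g_0=\Bo_{m^{-1}}\hat{\Bo}_{\tilde m}\hat f$, take $g_0$ (represented by (\ref{eq:deform})) as a defining function in $\mathcal{H}^k(L_d)$, and identify the difference of the two ray integrals with the K\"othe pairing (\ref{eq:kothe}). Beyond the paper, you also note that $g_0$ coincides with the classical Borel transform $\hat{\Bo}_{\Gamma_{1/k}}\hat f$ and check admissibility of the test function for $t\in S_d(\frac{\pi}{k},r)$; these supply details the paper leaves implicit.
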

\begin{proof}
First, observe that 
by (\ref{eq:sum}) and (\ref{eq:like}) the jump for $\hat{f}$ across the Stokes line $L_d$ may be written by 
\begin{equation}
\label{eq:jump_hyp}
 J_{L_d}\hat{f}(t)=f^{d^+}(t)-f^{d^-}(t)=\big(\mathcal{L}_{\tilde{m},d^+}-\mathcal{L}_{\tilde{m},d^-}\big)\hat{\Bo}_{\tilde{m}}\hat{f}(t)=\big(\mathcal{L}_{\Gamma_{1/k},d^+}-\mathcal{L}_{\Gamma_{1/k},d^-}\big)\Bo_{m^{-1}}\hat{\Bo}_{\tilde{m}}\hat{f}(t).
\end{equation}
Moreover, by (\ref{eq:deform}) we can treat
$$g_0(s):=\Bo_{m^{-1}}\hat{\Bo}_{\tilde{m}}\hat{f}(s)=\frac{1}{2\pi i}\oint_{\tilde{\gamma}_{s,d}}\frac{\hat{\Bo}_{\tilde{m}}\hat{f}(\zeta)}{\zeta}e_m\left(\frac{s}{\zeta}\right)\,d\zeta\in\Oo^{k}(D\cup (S_{d}\setminus L_{d}))$$
as a defining function of the hyperfunction $G(s):=[g_0(s)]_{d}\in \mathcal{H}^{k}(L_{d})$.
So, combining (\ref{eq:lap}), (\ref{eq:kothe}) and (\ref{eq:jump_hyp}) we conclude that 
\begin{gather*}
 J_{L_d}\widehat{f}(t)=G_0(s)\left[\frac{k(s/t)^ke^{-(s/t)^k}}{s}\right]\quad \textrm{for sufficiently small}\ r>0\ \textrm{and}\
 t\in S_{d}(\frac{\pi}{k}, r).
\end{gather*}
\end{proof}

Now, let $\hat{f}\in\CC[[t]]$ be $\mathbf{k}$-multisummable and $\mathbf{d}$ be as in Definition \ref{df:multisummable} with $L_{d_j}$
being the Stokes line of level $k_j$. It means that $\hat{f}=\hat{f}_1+\cdots+\hat{f}_n$, where
$\hat{f}_j$ is $k_j$-summable for $j=1,\dots,n$. 
Then, analogously as in the summable case, the jump across $L_{d_j}$ of level $k_j$ is given by
$$
J_{L_{d_j},k_j}\hat{f}=
f^{\mathbf{d_j^+}}-f^{\mathbf{d_j^-}}=f_j^{d_j^+}-f_j^{d_j^-}
$$
and we may describe this jump in terms of hyperfunctions. Hence we get directly the following multisummable version of Proposition \ref{prop:stokes}.
\begin{Prop}\label{prop:stokes_multi}
 Let $k_1>\dots>k_n>0$ and $\mathbf{k}=(k_1,\dots,k_n)$. We assume that $\hat{f}\in\CC[[t]]$ is $\mathbf{k}$-multisummable, $j\in\{1,\dots,n\}$, $\tilde{m}_j=\Gamma_{1/k_j}m_j$ is a sequence of generalized moments of order $1/k_j$ and $d_j$ is a singular direction of $\hat{f}$ of level $k_j$. Then
\begin{multline*}
 J_{L_{d_j},k_j}\hat{f}(t)=f^{\mathbf{d_j^+}}(t)-f^{\mathbf{d_j^-}}(t)=f_j^{d_j^+}(t)-f_j^{d_j^-}(t)=\big(\mathcal{L}_{\tilde{m}_j,d_j^+}-\mathcal{L}_{\tilde{m}_j,d_j^-}\big)\hat{\Bo}_{\tilde{m}_j}\hat{f}_j(t)\\
 =G_{0j}(s)\left[\frac{k_j(s/t)^{k_j}e^{-(s/t)^{k_j}}}{s}\right]
\end{multline*}
for sufficiently small $r_j>0$ and $t\in S_{d_j}(\frac{\pi}{k_j}, r_j)$, where
the hyperfunction $G_{0j}(s)$ is defined by
\begin{equation*}
 G_{0j}(s)=\left[\Bo_{m^{-1}_j}(\hat{\Bo}_{\tilde{m}_j}\hat{f}_j(s))\right]_{d_j}=
 \left[\frac{1}{2\pi i}\oint_{\tilde{\gamma}_{s,d_j}}\frac{\hat{\Bo}_{\tilde{m}_j}\hat{f}_j(\zeta)}{\zeta}e_{m_j}\left(\frac{s}{\zeta}\right)\,d\zeta\right]_{d_j}.
\end{equation*}
\end{Prop}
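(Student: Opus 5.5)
The plan is to reduce the multisummable statement to the summable one, Proposition \ref{prop:stokes}, applied to the component $\hat{f}_j$ with level $k_j$ and the sequence of generalized moments $\tilde{m}_j=\Gamma_{1/k_j}m_j$.

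First I would fix the decomposition $\hat{f}=\hat{f}_1+\cdots+\hat{f}_n$, where each $\hat{f}_i$ is $k_i$-summable, and two admissible multidirections $\mathbf{d_j^\pm}$. These agree with $\mathbf{d}$ in every coordinate except the $j$-th, which is replaced by $d_j^{\pm}$. Since $d_j^\pm$ is arbitrarily close to $d_j$, admissibility is preserved; if an inequality in Definition \ref{df:multisummable} is sharp, I would slightly perturb the neighbouring coordinates, which is harmless. By definition
$$f^{\mathbf{d_j^\pm}}=\sum_{i}\Ss_{k_i,(\mathbf{d_j^\pm})_i}\hat{f}_i .$$
For $i\neq j$ the summands coincide, because the $i$-th directions are the same and $d_i$ is a regular direction of $\hat{f}_i$ (or the sums in nearby directions agree by analytic continuation, the $k_i$-sum being locally independent of the direction). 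Hence the difference telescopes to $f_j^{d_j^+}-f_j^{d_j^-}$.

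It then remains to express this jump. Since $d_j$ is a singular direction of $\hat{f}_j$ and $\hat{f}_j$ is $k_j$-summable, I apply Proposition \ref{prop:stokes} with $k=k_j$, $\hat{f}$ replaced by $\hat{f}_j$, $m=m_j$ and $d=d_j$. This directly yields the representation
$$\big(\La_{\tilde{m}_j,d_j^+}-\La_{\tilde{m}_j,d_j^-}\big)\hat{\Bo}_{\tilde{m}_j}\hat{f}_j$$
via Remark \ref{re:sum}. It also gives the hyperfunction $G_{0j}=[\Bo_{m_j^{-1}}\hat{\Bo}_{\tilde{m}_j}\hat{f}_j]_{d_j}\in\mathcal{H}^{k_j}(L_{d_j})$, paired by (\ref{eq:kothe}) with $k_j(s/t)^{k_j}e^{-(s/t)^{k_j}}/s$, valid for $t\in S_{d_j}(\pi/k_j,r_j)$. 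The contour representation of the defining function comes from Proposition \ref{prop:5}.

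The main obstacle is the first step: the well-definedness of the jump independently of the decomposition, and the cancellation of the other components. I would handle it by noting that the multisum is independent of the decomposition (standard multisummability theory), so it suffices to work with one fixed decomposition in which the components $\hat{f}_i$, $i\ne j$, are summable in directions near $d_i$. The domain where both multisums live shrinks to the sector on which the $k_j$-sums overlap, and this is exactly the sector $S_{d_j}(\pi/k_j,r_j)$ in the statement.
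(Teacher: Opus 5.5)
Your proposal is correct and is essentially the paper's own argument. You fix the decomposition $\hat f=\hat f_1+\cdots+\hat f_n$, observe that the components $\hat f_i$ with $i\neq j$ cancel in $f^{\mathbf{d_j^+}}-f^{\mathbf{d_j^-}}$, and apply Proposition \ref{prop:stokes} to $\hat f_j$ with $k=k_j$, $m=m_j$, $d=d_j$.

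One small correction to your last paragraph: for $j\geq 2$ the multisums $f^{\mathbf{d_j^\pm}}$ are in general only defined on a sectorial region near $d_1$ of opening about $\pi/k_1$, not on all of $S_{d_j}(\frac{\pi}{k_j},r_j)$. So the first equality holds on that smaller common domain, and it is $f_j^{d_j^+}-f_j^{d_j^-}$ that continues the jump to $S_{d_j}(\frac{\pi}{k_j},r_j)$.
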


\section{Singular directions for summable solutions of some moment-pdes}\label{sec:12}
In this section we explain how to use a kernel function $e_m(z)$ associated with a given sequence preserving summability $m$ to describe jumps across the singular directions for summable and multisummable solutions of some moment partial differential equations.

First we define actual solutions of (\ref{eq:CP_u}) in the similar way to \cite[Definition 8]{M-P} and \cite[Definition 18]{M-T}. Since $\partial^l_{t}\hat{u}(0,z)=\frac{l!}{\tilde{m}_1(l)}\partial^l_{\tilde{m}_1,t}\hat{u}(0,z)$ for every $\hat{u}(t,z)\in\EE[[t]]$, we have
\begin{Def}
Let $G$ be a sectorial region in the universal covering space $\widetilde{\CC}$. A function $u\in\Oo(G,\EE)$ is called an \emph{actual solution}
of (\ref{eq:CP_u})
if it satisfies 
\begin{equation*}
    \left\{
    \begin{array}{l}
     P(\partial_{\tilde{m}_1,t},\partial_{\tilde{m}_2,z})u=0\\
     \lim\limits_{t\to 0,\ t\in G}\partial^l_{t}u(t,z)=\frac{l!}{\tilde{m}_1(l)}\varphi_l(z)\in\EE,\quad l=0,\dots,p-1.
    \end{array}
    \right.
 \end{equation*}
\end{Def}

Now we are ready to formulate the following general result for summable solutions of (\ref{eq:CP_u}).
\begin{Thm}\label{th:5}
 Let $k>0$ and let $\tilde{m}_1$, $\tilde{m}_2$ be sequences of generalized moments. We assume that the solution $\hat{u}\in\Oo[[t]]$ of the Cauchy problem 
 \begin{equation}
 \label{eq:CP_u2}
    \left\{
    \begin{array}{l}
     P(\partial_{\tilde{m}_1,t},\partial_{\tilde{m}_2,z})u=0\\
     \partial^l_{\tilde{m}_1,t}u(0,z)=\varphi_l(z)\in\Oo(D),\quad l=0,\dots,p-1
    \end{array}
    \right.
 \end{equation}
 is $k$-summable with singular directions $d_1,\dots,d_n$ with $0\leq d_1<\dots<d_n<2\pi$ and $\tilde{m}=m\Gamma_{1/k}$ is a sequence of generalized moments of order $1/k$.
 Then $v=\hat{\Bo}_{\tilde{m},t}\hat{u}\in\Oo(D\times D)$ is a holomorphic solution of the Cauchy problem
 \begin{equation}
\label{eq:CP_v}
    \left\{
    \begin{array}{l}
     P(\partial_{\tilde{m}\tilde{m}_1,t},\partial_{\tilde{m}_2,z})v=0\\
     \partial^l_{\tilde{m}\tilde{m}_1,t}v(0,z)=\varphi_l(z)\in\Oo(D),\quad l=0,\dots,p-1.
    \end{array}
    \right.
 \end{equation}
 
 Moreover, using this holomorphic solution $v\in\Oo(D\times D)$ of (\ref{eq:CP_v}) we can get a family of $n$ actual solutions
 $\{u_1,\dots,u_n\}$ of (\ref{eq:CP_u2}), where
 \begin{equation}\label{eq:u_j}
  u_j(t,z)=u^{\theta}(t,z)=\La_{\tilde{m},\theta}v(t,z)\quad\text{for}\quad \theta\in (d_{j-1},d_j),\quad j=1,\dots,n,
 \end{equation}
 $d_j$ are singular directions and $d_0=d_n-2\pi$,
 and $u_j\in\Oo(G_j\times D)$, where $G_j=G_j(\frac{d_{j-1}+d_j}{2}, \pi/k+d_j-d_{j-1})$  is a 
 sectorial region of opening $\pi/k+d_j-d_{j-1}>0$ and bisecting direction $\frac{d_{j-1}+d_j}{2}\in\RR$.
 
 Furthermore, a jump for $\hat{u}$ across the singular direction $d_j$ ($j=1,\dots,n$) is given by
 \begin{equation*}
  J_{L_{d_j},k}\hat{u}(t,0)=u^{d_j^+}(t,0)-u^{d_j^-}(t,0)=\left[\frac{1}{2\pi i}\oint_{\tilde{\gamma}_{s,d_j}}\frac{v(\zeta,0)}{\zeta}e_m\left(\frac{s}{\zeta}\right)\,d\zeta\right]_{d_j}\left[\frac{k(s/t)^ke^{-(s/t)^k}}{s}\right].
 \end{equation*}
\end{Thm}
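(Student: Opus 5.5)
The proof splits into three parts, each reducing to results already established above.

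\emph{Step 1: the Cauchy problem (\ref{eq:CP_v}).} Apply $\hat{\Bo}_{\tilde{m},t}$ to both sides of (\ref{eq:CP_u2}). By Proposition \ref{prop:commutate} with $m_1=\tilde{m}$ and $m_2=\tilde{m}_1$ we have $\hat{\Bo}_{\tilde{m},t}\partial_{\tilde{m}_1,t}=\partial_{\tilde{m}\tilde{m}_1,t}\hat{\Bo}_{\tilde{m},t}$. The operator $\hat{\Bo}_{\tilde{m},t}$ acts only in $t$, so it commutes with $\partial_{\tilde{m}_2,z}$. This gives $P(\partial_{\tilde{m}\tilde{m}_1,t},\partial_{\tilde{m}_2,z})v=0$. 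The initial data are unchanged: evaluating $\partial^l_{\tilde{m}\tilde{m}_1,t}v$ at $t=0$ gives the $l$-th coefficient times $\tilde{m}(l)\tilde{m}_1(l)/\tilde{m}(l)$, which equals $\partial^l_{\tilde{m}_1,t}\hat{u}(0,z)=\varphi_l(z)$.

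For convergence, $\hat{u}$ is $k$-summable in some (indeed almost every) direction. By Remark \ref{re:summable} (in the $\Oo$-valued version, as in Proposition \ref{pr:sum}) it follows that $v=\hat{\Bo}_{\tilde{m},t}\hat{u}\in\Oo^k(\hat{S}_d\times D)$. In particular $v$ converges near the origin, so $v\in\Oo(D\times D)$.

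\emph{Step 2: the actual solutions.} For $\theta\in(d_{j-1},d_j)$ the direction $\theta$ is nonsingular. Hence $v\in\Oo^k(\hat{S}_\theta\times D)$, and by Remark \ref{re:sum} we get $u^\theta=\Ss_{k,\theta}\hat{u}=\La_{\tilde{m},\theta}v$. As $\theta$ varies in $(d_{j-1},d_j)$ these functions continue one another analytically, by the standard Cauchy argument for $\La_{\Gamma_{1/k},\theta}$ applied to $\Bo_{m^{-1}}v$. Here $\Bo_{m^{-1}}v$ is holomorphic of growth order $k$ on the union of the sectors, by Proposition \ref{prop:5}. Each $u^\theta$ is defined on a sector of opening $\pi/k$ about $\theta$ (Remark \ref{re:laplace}), so the union is the sectorial region $G_j$ of opening $\pi/k+d_j-d_{j-1}$.

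It remains to show that $u_j$ is an actual solution.
\begin{itemize}
\item The equation holds because $u_j$ is the $k$-sum of the formal solution $\hat{u}$. The moment operators $\partial_{\tilde{m}_1,t}$ and $\partial_{\tilde{m}_2,z}$ should commute with $\Ss_{k,\theta}$. I would verify this by writing $\Ss_{k,\theta}=\La_{\tilde{m},\theta}\circ\hat{\Bo}_{\tilde{m}}$ and using the integral representation of moment derivatives in Theorem \ref{th:3}, or alternatively via the asymptotic-expansion uniqueness property of $k$-sums.
\item The initial conditions hold because $u_j\sim\hat{u}$ asymptotically in $G_j$. This yields $\lim_{t\to0}\partial^l_t u_j=l!\,u_l(z)=\frac{l!}{\tilde{m}_1(l)}\varphi_l(z)$.
\end{itemize}
The commutation of $\partial_{\tilde{m}_1,t}$ with the Laplace-like operator, for a general sequence of generalized moments $\tilde{m}_1$, is the step I expect to be the main obstacle. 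The asymptotic route is cleanest: $P(\ldots)u_j$ has asymptotic expansion $P(\ldots)\hat{u}=0$ in $G_j$, whose opening exceeds $\pi/k$, and one then invokes Watson's lemma. For this one needs that moment derivatives of functions with Gevrey asymptotics keep those asymptotics. That property follows from Theorem \ref{th:3} together with the $q$-/moment analogues treated in \cite{I-M,M-S-T}.

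\emph{Step 3: the jump.} Restrict to $z=0$. The series $\hat{f}(t):=\hat{u}(t,0)$ is $k$-summable with singular directions among the $d_j$, and $\hat{\Bo}_{\tilde{m}}\hat{f}(\zeta)=v(\zeta,0)$. Proposition \ref{prop:stokes} applied to $\hat{f}$ then gives the stated formula at once. The hyperfunction $G_0=[\Bo_{m^{-1}}v(\cdot,0)]_{d_j}$ has the contour representation (\ref{eq:deform}) from Proposition \ref{prop:5}.
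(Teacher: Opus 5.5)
Your route is the paper's. You get convergence of $v$ from Remark~\ref{re:summable}, and the functions $u_j$ on $G_j$ from Remarks~\ref{re:sum} and~\ref{re:laplace} plus rotation of the integration ray (the paper cites \cite[Lemma 10]{B2}). The initial data come from the asymptotic expansion (\cite[Proposition 8]{B2}), and the jump from Proposition~\ref{prop:stokes} with $\hat{\Bo}_{\tilde m}\hat u(\zeta,0)=v(\zeta,0)$. In Step~1 your direct use of Proposition~\ref{prop:commutate}, with the explicit check of the initial data, is cleaner than the paper's appeal to Theorem~\ref{th:4}. That theorem is stated for $\hat{\Bo}_{m_1^{-1},t}\hat{\Bo}_{m_2^{-1},z}$, not for $\hat{\Bo}_{\tilde m,t}$.

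The weak point is your justification that $u_j$ satisfies the equation, i.e.\ that $P(\partial_{\tilde m_1,t},\partial_{\tilde m_2,z})\Ss_{k,\theta}\hat u=\Ss_{k,\theta}P(\partial_{\tilde m_1,t},\partial_{\tilde m_2,z})\hat u$.

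In the $t$-variable, Theorem~\ref{th:3} cannot supply this. It represents moment derivatives of functions holomorphic on a disc $D_r$. But $u_j(\cdot,z)$ is holomorphic only on the sectorial region $G_j$, so the theorem neither gives a meaning to $\partial_{\tilde m_1,t}u_j$ nor controls its asymptotics. In $z$ there is no problem: $u_j(t,\cdot)$ is holomorphic on a disc, and $\partial_{\tilde m_2,z}$ passes through $\La_{\tilde m,\theta}$ coefficientwise using the bounds of Lemma~\ref{le:1}.

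Your Watson-lemma route is the right mechanism; it is how Balser proves the classical statement for $\partial_t$. Its key input, however, is that $\partial_{\tilde m_1,t}$ acts on functions on sectors and preserves Gevrey asymptotics of order $1/k$, with the termwise moment-differentiated expansion. Nothing in the paper proves this. The paper imports it by invoking the moment version of \cite[Theorem 3.2]{B1}: $k$-summable series in a direction form a moment differential algebra on which $\Ss_{k,d}$ commutes with $\partial_{\tilde m_1,t}$ and $\partial_{\tilde m_2,z}$.

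You should rest this step on that fact, as the paper does, not on Theorem~\ref{th:3}. For concrete operators it can be checked by hand. For example, $D_{q,t}$ commutes with $\Ss_{k,\theta}$ because $k$-sums commute with the dilation $t\mapsto qt$ and with multiplication by $t$.
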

\begin{proof}
 Since the solution $\hat{u}$ of (\ref{eq:CP_u2}) is $k$-summable and $\tilde{m}$ is a sequence of generalized moments of order $1/k$ then by Remark \ref{re:summable} we see that $v=\hat{\Bo}_{\tilde{m},t}\hat{u}\in\Oo(D\times D)$. Moreover, by Theorem~\ref{th:4} it follows that $v$ is a solution of the Cauchy problem (\ref{eq:CP_v}).
 
 Next, by Remark \ref{re:laplace} we deduce that $u^{\theta_1}\in\Oo(G(\theta_1, \pi/k)\times D)$ and $u^{\theta_2}\in\Oo(G(\theta_2, \pi/k)\times D)$ for every $\theta_1,\theta_2\in (d_{j-1},d_j)$. Moreover, by \cite[Lemma 10]{B2} $u^{\theta_1}=u^{\theta_2}$ on $\big(G(\theta_1, \pi/k)\times D\big)\cap\big( G(\theta_2, \pi/k)\times D\big)$. Hence the function $u_j$ given by (\ref{eq:u_j}) is well-defined and is analytically continued to the set $G_j=G_j(\frac{d_{j-1}+d_j}{2}, \pi/k+d_j-d_{j-1})$ with respect to $t$.
 Moreover, by Remark \ref{re:sum} we see that $u_j(t,z)=u^{\theta}(t,z)=\Ss_{k,\theta}\hat{u}(t,z)$.
 
 By the moment version of \cite[Theorem 3.2]{B1} we conclude that the space of $k$-summable series in a direction $d$ is a moment differential algebra over $\CC$.
 It means that it is a linear space, which is also closed under multiplication
 and moment differentiations, and which for any series $\hat{f},\hat{g}\in\EE\{t\}_{k,d}$ satisfies:
 $\Ss_{k,d}(\hat{f}+\hat{g})=\Ss_{k,d}\hat{f}+\Ss_{k,d}\hat{g}$, $\Ss_{k,d}(\hat{f}\cdot\hat{g})=\Ss_{k,d}\hat{f}\cdot\Ss_{k,d}\hat{g}$,
 $\Ss_{k,d}(\partial_{\tilde{m}_1,t}\hat{f})=
 \partial_{\tilde{m}_1,t}(\Ss_{k,d}\hat{f})$
 and $\Ss_{k,d}(\partial_{\tilde{m}_2,z}\hat{f})=
 \partial_{\tilde{m}_2,z}(\Ss_{k,d}\hat{f}$).
 
 Hence for every $\theta\in (d_{j-1},d_j)$
  \begin{gather*}
  P(\partial_{\tilde{m}_1,t},\partial_{\tilde{m}_2,z})u_{j}=P(\partial_{\tilde{m}_1,t},\partial_{\tilde{m}_2,z})\Ss_{k,\theta}\hat{u}=
  \Ss_{k,\theta}P(\partial_{\tilde{m}_1,t},\partial_{\tilde{m}_2,z})\hat{u}=0.
 \end{gather*}
 Additionally, if we write the formal solution $\hat{u}\in\Oo\{t\}_k$ of (\ref{eq:CP_u2}) as $\hat{u}(t,z)=\sum_{n=0}^{\infty}u_n(z)t^n$ then by \cite[Proposition 8]{B2} we get
 \begin{gather*}
   \lim\limits_{t\to 0, t\in G_j}\partial_{t}^lu_{j}(t,z)=l!u_l(z)=\frac{l!}{\tilde{m}_1(l)}\partial^l_{\tilde{m}_1,t}\hat{u}(0,z)=\frac{l!}{\tilde{m}_1(l)}\varphi_l(z)\quad \textrm{for}\quad l=0,\dots,p-1.
 \end{gather*}
 Therefore $u_{j}(t,z)$ is an actual solution of (\ref{eq:CP_u2}) for $j=1,\dots,n$.
 
 By Proposition \ref{prop:stokes} we get 
 \begin{equation*}
  J_{L_{d_j},k}\hat{u}(t,0)=u^{d_j^+}(t,0)-u^{d_j^-}(t,0)=\left[\frac{1}{2\pi i}\oint_{\tilde{\gamma}_{s,d_j}}\frac{\hat{\Bo}_{\tilde{m}}u(\zeta,0)}{\zeta}e_m\left(\frac{s}{\zeta}\right)\,d\zeta\right]_{d_j}\left[\frac{k(s/t)^ke^{-(s/t)^k}}{s}\right].
 \end{equation*}
 The proof is completed by observation that $\hat{\Bo}_{\tilde{m}}u(\zeta,0)=v(\zeta,0)$.
\end{proof}

To be able to calculate jumps more precisely, we have to limit ourselves to simpler equations.
\begin{Thm}\label{th:6}
 Let $\tilde{m}_1=\Gamma_{s_1}m_1$ and $\tilde{m}_2=\Gamma_{s_2}m_2$ be sequences of generalized moments of orders $s_1$ and $s_2$ respectively, where $0\leq s_1<s_2$. We also assume that $a,\lambda\in\CC$, $z_0\in\CC\setminus\{0\}$ and $k=\frac{1}{s_2-s_1}$.
 Then the Cauchy problem
 \begin{equation}
 \label{eq:first}
    \left\{
    \begin{array}{l}
     \big(\partial_{\tilde{m}_1,t}-\lambda\partial_{\tilde{m}_2,z}\big)u=0\\
     u(0,z)=\varphi(z):=\frac{a}{z-z_0}\in\Oo(D)
    \end{array}
    \right.
 \end{equation}
 has a unique formal power series solution $\hat{u}\in\Oo[[t]]$ and this solution is $k$-summable with only one singular direction $d=\arg z_0 - \arg\lambda$ (modulo $2\pi$). 
 Moreover, a jump for $\hat{u}$ across this singular direction $d$ is given by
 \begin{equation}
  J_{L_d,k}\hat{u}(t,0)=u^{d^+}(t,0)-u^{d^-}(t,0)=\left[\frac{a}{z_0}e_{m}\Big(\frac{\lambda s}{z_0}\Big)\right]_d\left[\frac{k(s/t)^ke^{-(s/t)^k}}{s}\right],
 \end{equation}
where $e_m(z)$ denotes a kernel function associated with a sequence preserving summability
\begin{equation}\label{eq:m}
m=\frac{\Gamma_{s_2}}{\Gamma_{s_1}\Gamma_{s_2-s_1}}\frac{m_2}{m_1}.
\end{equation}
\end{Thm}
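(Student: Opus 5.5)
We first compute the formal solution explicitly. Writing $\hat u(t,z)=\sum_{n}u_n(z)t^n$, the equation $\partial_{\tilde m_1,t}u=\lambda\partial_{\tilde m_2,z}u$ gives $\frac{\tilde m_1(n+1)}{\tilde m_1(n)}u_{n+1}=\lambda\partial_{\tilde m_2,z}u_n$. Hence $u_n=\frac{\lambda^n}{\tilde m_1(n)}\partial^n_{\tilde m_2,z}\varphi$, which proves uniqueness. Since $\varphi(z)=-\frac{a}{z_0}\sum_j z^j z_0^{-j}$, we have $\partial^n_{\tilde m_2,z}\varphi(z)=-\frac{a}{z_0}\sum_j\frac{\tilde m_2(j+n)}{\tilde m_2(j)}z^jz_0^{-j-n}$. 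At $z=0$ this gives
$$u_n(0)=-\frac{a}{z_0}\Big(\frac{\lambda}{z_0}\Big)^n\frac{\tilde m_2(n)}{\tilde m_1(n)}.$$
The ratio $\tilde m_2/\tilde m_1=\Gamma_{s_2-s_1}\,m$, with $m$ as in (\ref{eq:m}). The sequence $m$ preserves summability by Example \ref{ex:1} (the quotient $\Gamma_{s_2}/(\Gamma_{s_1}\Gamma_{s_2-s_1})$ is a ratio of gamma products with equal sums $s_2=s_1+(s_2-s_1)$; for $s_1=0$ it is $\mathbf 1$) together with the group property of Theorem \ref{th:1}(b). So $\tilde m:=\Gamma_{1/k}m=\tilde m_2/\tilde m_1$ is a sequence of generalized moments of order $1/k$.

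Next we apply $\hat{\Bo}_{\tilde m,t}$. Here it is convenient to use Theorem \ref{th:5}, or a direct computation: $v=\hat\Bo_{\tilde m,t}\hat u$ solves $(\partial_{\tilde m_2,t}-\lambda\partial_{\tilde m_2,z})v=0$. Its coefficients are $v_n(z)=\frac{\lambda^n}{\tilde m_2(n)}\partial^n_{\tilde m_2,z}\varphi$, so
$$v(t,z)=-\frac{a}{z_0}\sum_{j,n}\frac{\tilde m_2(j+n)}{\tilde m_2(j)\tilde m_2(n)}\Big(\frac{z}{z_0}\Big)^j\Big(\frac{\lambda t}{z_0}\Big)^n.$$
In particular
$$v(t,0)=-\frac{a}{z_0}\sum_n\Big(\frac{\lambda t}{z_0}\Big)^n=\frac{a}{z_0}\cdot\frac{1}{\lambda t/z_0-1}\cdot(-1)\cdot(-1),$$
that is, $\frac{a}{\lambda t-z_0}$ up to the sign bookkeeping, which is a rational function with a single pole at $t=z_0/\lambda$.

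For summability of $\hat u$ in $(t,z)$, Lemma \ref{le:1} and Remark \ref{re:summable} require showing $v\in\Oo^k(\hat S_\theta\times D)$ for every $\theta\ne d$. We write $v(t,z)=-\frac{a}{z_0}\,\hat\Bo_{\tilde m_2}$ applied to a generating function. More directly, we reduce via Theorem \ref{th:4} to the $\Gamma$-case, where $v=\hat\Bo_{m_2^{-1},t}\hat\Bo_{m_2^{-1},z}$ of $\frac{a}{z+\lambda t-z_0}$-type expressions is known to be holomorphic for $t$ off the ray $\arg t=\arg z_0-\arg\lambda$ with exponential growth; the known result for $\partial^{s_1}_t-\lambda\partial^{s_2}_z$ from \cite{M-P} gives $k$-summability with singular direction $d$. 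Theorem \ref{th:4} transfers this to $\hat u$. This transfer step, i.e.\ checking that exactly one singular direction survives, is the main technical point. If needed, we will instead verify it directly using Proposition \ref{prop:5}, since $\hat{\Bo}_m$ preserves $\Oo^k(\hat S_\theta)$.

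Finally, the jump. By Proposition \ref{prop:stokes} (or Theorem \ref{th:5}), the jump equals $[\Bo_{m^{-1}}v(\cdot,0)]_d$ applied to the kernel $k(s/t)^ke^{-(s/t)^k}/s$. Taking the $t$-series of $v(t,0)$ coefficientwise, we get
$$\hat\Bo_{m^{-1}}v(s,0)=-\frac{a}{z_0}\sum_n m(n)\Big(\frac{\lambda s}{z_0}\Big)^n=-\frac{a}{z_0}e_m\Big(\frac{\lambda s}{z_0}\Big).$$
Proposition \ref{prop:5} together with Proposition \ref{prop:3}(c) continues this to $D\cup(S_d\setminus L_d)$ with growth $\Oo^{>0}$, and so it is a legitimate defining function. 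The sign has to be reconciled with the orientation of $\gamma_d$, and we expect it to give exactly $\big[\frac{a}{z_0}e_m(\frac{\lambda s}{z_0})\big]_d$ as stated. We will check this carefully against the simple case $m=\mathbf 1$, where $e_m(z)=\frac1{1-z}$.
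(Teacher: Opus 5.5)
Your route is the paper's: Theorem~\ref{th:4} reduces the summability question to a $\Gamma$-moment problem, and Theorem~\ref{th:5} gives the jump, using your $v=\hat{\Bo}_{\tilde m,t}\hat u$ (the paper's $V$) with $v(t,0)=\frac{a}{\lambda t-z_0}$. The summability step, however, is left open and is misdescribed. Theorem~\ref{th:4} uses $\hat v=\hat{\Bo}_{m_1^{-1},t}\hat{\Bo}_{m_2^{-1},z}\hat u$, not $\hat{\Bo}_{m_2^{-1},t}$. The new Cauchy datum is then $\hat{\Bo}_{m_2^{-1},z}\varphi(z)=-\frac{a}{z_0}e_{m_2}(z/z_0)$, which is in general not rational. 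So there are no ``$\frac{a}{z+\lambda t-z_0}$-type expressions'', and any $\Gamma$-case result you quote needs a growth hypothesis on this datum.

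The missing input is that $-\frac{a}{z_0}e_{m_2}(z/z_0)\in\Oo^k(\hat S_\theta)$ for every $\theta\neq\arg z_0$. This follows from Proposition~\ref{prop:3}(c), or from Proposition~\ref{prop:5} applied to $\varphi$; that is where your fallback remark belongs. The paper then passes to $\hat w=\hat{\Bo}_{\Gamma_{s_2}/\Gamma_{s_1},t}\hat v$, which solves $(\partial_{\Gamma_{s_2},t}-\lambda\partial_{\Gamma_{s_2},z})w=0$ with the same datum, and \cite[Lemma 5]{Mic7} gives $w\in\Oo^k(\hat S_{\theta-\arg\lambda}\times D)$. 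Since $\Gamma_{s_2}/\Gamma_{s_1}$ is a sequence of generalized moments of order $1/k$, this is $k$-summability of $\hat v$, hence of $\hat u$, in every direction $\neq d$. That $d$ really is singular (which the paper leaves implicit) follows cheaply from your own formula: $v(t,0)$ has its pole $z_0/\lambda$ on $L_d$, so Remark~\ref{re:summable} rules out summability in direction $d$.

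Your coefficientwise evaluation $\Bo_{m^{-1}}v(\cdot,0)(s)=-\frac{a}{z_0}e_m\big(\frac{\lambda s}{z_0}\big)$ is correct. The last step you plan, reconciling the sign with the orientation of $\gamma_d$, cannot work, for three reasons. The orientation $\gamma_d=-\gamma_{d^-}+\gamma_{d^+}$ is already fixed in (\ref{eq:kothe}). Proposition~\ref{prop:stokes} identifies the defining function as exactly $g_0=\Bo_{m^{-1}}\hat{\Bo}_{\tilde m}\hat u(\cdot,0)$. And for $a\neq0$, $\pm\frac{a}{z_0}e_m\big(\frac{\lambda s}{z_0}\big)$ are different hyperfunctions: their difference is singular on $L_d$ by Proposition~\ref{prop:3}(d), so it does not lie in $\Oo^k(\hat S_d)$.

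Your proposed test case settles it against the printed sign. Take $s_1=0$, $s_2=1$, $m_1=m_2=\mathbf{1}$; then $k=1$, $m=\mathbf{1}$ and $\hat{\Bo}_{\Gamma_1}\hat u(s,0)=\frac{a}{\lambda s-z_0}$. Hence $J_{L_d,1}\hat u(t,0)=\frac{1}{t}\int_{\gamma_d}\frac{a}{\lambda s-z_0}e^{-s/t}\,ds$, with defining function $\frac{a}{\lambda s-z_0}=-\frac{a}{z_0}e_{\mathbf{1}}\big(\frac{\lambda s}{z_0}\big)$.

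The paper reaches $+\frac{a}{z_0}e_m\big(\frac{\lambda s}{z_0}\big)$ by evaluating $\frac{1}{2\pi i}\oint_{\tilde\gamma_{s,d}}\frac{a\,e_m(s/\zeta)}{\zeta(\lambda\zeta-z_0)}\,d\zeta$ as the residue at $\zeta=z_0/\lambda$. But this contour is a deformation of $|\zeta|=\varepsilon$ from (\ref{eq:integral_2}). It surrounds the set $\{rs\colon 0<r\leq A\}$ carrying the possible singularities of $e_m(s/\zeta)$, not the pole. Pushing it outward across the pole, where the integrand is $O(|\zeta|^{-2})$ at infinity, gives $-\frac{a}{z_0}e_m\big(\frac{\lambda s}{z_0}\big)$, in agreement with you. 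So the correct conclusion is the jump formula with $\big[-\frac{a}{z_0}e_m\big(\frac{\lambda s}{z_0}\big)\big]_d$. The statement as printed is off by a sign, and you should not try to recover it.
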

\begin{proof}
First observe that 
\begin{equation*}
\hat{u}(t,z)=\sum_{n=0}^{\infty}\frac{\lambda^n\partial^n_{\tilde{m_2},z}\varphi(z)}{\tilde{m}_1(n)}
\end{equation*}
is the unique formal power series solution of (\ref{eq:first})
Since 
$$\hat{\Bo}_{m_2^{-1},z}\frac{a}{z-z_0}=-\frac{a}{z_0}\hat{\Bo}_{m_2^{-1},z}\left(\sum_{n=0}^{\infty}\Big(\frac{z}{z_0}\Big)\right)=-\frac{a}{z_0}e_{m_2}(z/z_0),
$$
applying Theorem \ref{th:4} to (\ref{eq:first}) we conclude that $\hat{v}=\hat{\Bo}_{m_1^{-1},t}\hat{\Bo}_{m_2^{-1},z}\hat{u}$ 
is the formal solution of the Cauchy problem
\begin{equation*}
    \left\{
    \begin{array}{l}
     \big(\partial_{\Gamma_{s_1},t}-\lambda\partial_{\Gamma_{s_2},z}\big)v=0\\
     v(0,z)=\hat{\Bo}_{m_2^{-1},z}\varphi(z)=-\frac{a}{z_0}e_{m_2}(z/z_0).
    \end{array}
    \right.
 \end{equation*}
Next, let $\hat{w}=\hat{\Bo}_{\Gamma_{s_2}/\Gamma_{s_1},t}\hat{v}$. Then $\hat{w}$ is the formal solution of the Cauchy problem
\begin{equation*}
    \left\{
    \begin{array}{l}
     \big(\partial_{\Gamma_{s_2},t}-\lambda\partial_{\Gamma_{s_2},z}\big)w=0\\
     w(0,z)=\hat{\Bo}_{m_2^{-1},z}\varphi(z)=-\frac{a}{z_0}e_{m_2}(z/z_0).
    \end{array}
    \right.
\end{equation*}

 Let $k=\frac{1}{s_2-s_1}$. Since the function $z\mapsto -\frac{a}{z_0}e_{m_2}(z/z_0)$ belongs to the space $\Oo^k(\hat{S}_{\theta})$ for every direction $\theta\neq \arg z_0$ (modulo $2\pi$), by \cite[Lemma 5]{Mic7} we conclude that $w\in\Oo^k(\hat{S}_{\theta-\arg\lambda},\Oo)=:\Oo^k(\hat{S}_{\theta-\arg\lambda}\times D)$ for such $\theta$. Hence $\hat{v}$ is $k$-summable with only one singular direction $d=\arg z_0-\arg\lambda$. By Theorem \ref{th:4} it means that also $\hat{u}$ is $k$-summable with only one singular direction $d=\arg z_0-\arg\lambda$.
 
 Applying Theorem \ref{th:5} to $k$-summable formal power series solution $\hat{u}$ of the Cauchy problem (\ref{eq:first}) with the singular direction $d=\arg z_0-\arg\lambda$ and with $m$ given by (\ref{eq:m}), we conclude that
 \begin{equation*}
  J_{L_{d},k}\hat{u}(t,0)=u^{d^+}(t,0)-u^{d^-}(t,0)=\left[\frac{1}{2\pi i}\oint_{\tilde{\gamma}_{s,d}}\frac{V(\zeta,0)}{\zeta}e_m\left(\frac{s}{\zeta}\right)\,d\zeta\right]_{d}\left[\frac{k(s/t)^ke^{-(s/t)^k}}{s}\right],
 \end{equation*}
 where $V=\hat{\Bo}_{m\Gamma_{s_2-s_1},t}\hat{u}\in\Oo(D\times D)$ is a holomorphic solution of the Cauchy problem
 \begin{equation*}
    \left\{
    \begin{array}{l}
     \big(\partial_{\tilde{m}_2,t}-\lambda\partial_{\tilde{m}_2,z}\big)V=0\\
     V(0,z)=\varphi(z):=\frac{a}{z-z_0}
    \end{array}
    \right..
 \end{equation*}
 
 Since 
 \begin{equation*}
  V(t,z)=\sum_{n=0}^{\infty}\frac{\lambda^n\partial_{\tilde{m}_2,z}^n\varphi(z)}{\tilde{m}_2(n)}t^n
 \end{equation*}
we get
\begin{equation*}
  V(t,0)=\sum_{n=0}^{\infty}\frac{\partial_{\tilde{m}_2,z}^n\varphi(0)}{\tilde{m}_2(n)}(\lambda t)^n=\varphi(\lambda t)=\frac{a}{\lambda t-z_0}.
 \end{equation*}
 Hence by the residue theorem
 \begin{equation*}
  \frac{1}{2\pi i}\oint_{\tilde{\gamma}_{s,d}}\frac{V(\zeta,0)}{\zeta}e_m\left(\frac{s}{\zeta}\right)\,d\zeta=\frac{1}{2\pi i}\oint_{\tilde{\gamma}_{s,d}}\frac{a}{\lambda\zeta(\zeta-z_0/\lambda)}e_m\left(\frac{s}{\zeta}\right)\,d\zeta=\frac{a}{z_0}e_m\left(\frac{\lambda s}{z_0}\right),
 \end{equation*}
which completes the proof.
\end{proof}

\begin{Ex}\label{ex:4}
Let us consider the Cauchy problem
\begin{equation}
\label{eq:special}
    \left\{
    \begin{array}{l}
     \big(D_{q,t}-\lambda\partial_{z}\big)u=0\\
     u(0,z)=\varphi(z):=\frac{a}{z-z_0},
    \end{array}
    \right.
 \end{equation}
 where $D_{q,t}$ denotes the $q$-difference operator and $q\in(0,1)$. Observe that (\ref{eq:special}) is a special case of (\ref{eq:first}) with $s_1=0$, $m_1=([n]_q!)_{n\geq 0}$, $s_2=1$ and $m_2=\mathbf{1}=(1)_{n\geq 0}$. It means that $k=1$ and $m=m_1^{-1}$. By Theorem \ref{th:6} the unique formal solution of (\ref{eq:special}) is $1$-summable with a unique singular direction $d=\arg z_0-\arg\lambda$. Moreover, since 
 $$e_m(z)=\sum_{n=0}^{\infty}\frac{z^n}{[n]_q!}=\exp_q(z)=\sum_{n=0}^{\infty}\frac{(1-q)^n}{(q;q)_n}z^n=\prod_{n=0}^{\infty}\frac{1}{1-(1-q)q^nz}.$$
 is meromorphic on $\CC$ (see Example \ref{ex:2}) with simple poles at
 \begin{equation*}
  z=z_n=\frac{q^{-n}}{1-q}\quad\text{for}\quad n\in\NN_0,
 \end{equation*}
by the residue theorem we conclude that (cf. \cite[the proof of Lemma 2]{I-M})
\begin{multline*}
  J_{L_d,1}\hat{u}(t,0)=u^{d^+}(t,0)-u^{d^-}(t,0)=\left[\frac{a}{z_0}e_{m}\Big(\frac{\lambda s}{z_0}\Big)\right]_d\left[\frac{(s/t)e^{-s/t}}{s}\right]
  =\left[\frac{a}{z_0}\exp_q\Big(\frac{\lambda s}{z_0}\Big)\right]_d\left[\frac{e^{-s/t}}{t}\right]\\
  =-\frac{a}{z_0t}\sum_{n=0}^{\infty}\exp\left(-\frac{z_0}{\lambda t}\frac{q^{-n}}{1-q}\right)\mathrm{res}_{z=z_n}\exp_q(z)=-\frac{a}{z_0t}\sum_{n=0}^{\infty}\exp\left(-\frac{z_0}{\lambda t}\frac{q^{-n}}{1-q}\right)\frac{(-1)^nq^{\frac{n(n+1)}{2}}}{(q;q)_n(q;q)_{\infty}}.
 \end{multline*}
\end{Ex}

More general we have
\begin{Ex}\label{ex:5}
Fix $\tilde{s}>0$. We consider the following generalization of (\ref{eq:special})
\begin{equation}
\label{eq:special2}
    \left\{
    \begin{array}{l}
     \big(D_{q,t}-\lambda\partial_{\Gamma_{\tilde{s}},z}\big)u=0\\
     u(0,z)=\varphi(z):=\frac{a}{z-z_0},
    \end{array}
    \right..
 \end{equation}
 We see that (\ref{eq:special2}) is also a special case of (\ref{eq:first}) with $s_1=0$, $m_1=([n]_q!)_{n\geq 0}$, $s_2=\tilde{s}$ and $m_2=\mathbf{1}=(1)_{n\geq 0}$. It means that $k=1/\tilde{s}$ and $m=m_1^{-1}$. By Theorem \ref{th:6} the unique formal solution of (\ref{eq:special2}) is $k$-summable with a unique singular direction $d=\arg z_0-\arg\lambda$. As previously
 $e_m(z)=\exp_q(z)$
 is meromorphic on $\CC$ with simple poles at
 $z=z_n=\frac{q^{-n}}{1-q}$ for $n\in\NN_0$.
By the residue theorem we get
\begin{multline*}
  J_{L_d,k}\hat{u}(t,0)=u^{d^+}(t,0)-u^{d^-}(t,0)=\left[\frac{a}{z_0}e_{m}\Big(\frac{\lambda s}{z_0}\Big)\right]_d\left[\frac{k(s/t)^ke^{-(s/t)^k}}{s}\right]\\
  =\left[\frac{a}{z_0}\exp_q\left(\frac{\lambda s}{z_0}\right)\right]_d\left[\frac{k(s/t)^ke^{-(s/t)^k}}{s}\right]
  =-\frac{ak}{z_0t^k}\sum_{n=0}^{\infty}\left(\frac{z_0}{\lambda}\frac{q^{-n}}{1-q}\right)^{k-1}e^{-\left(\frac{z_0}{\lambda t}\frac{q^{-n}}{1-q}\right)^k}\mathrm{res}_{z=z_n}\exp_q(z)\\
  =-\frac{akz_0^{k-2}}{\lambda^{k-1}(1-q)^{k-1}t^k}\sum_{n=0}^{\infty}q^{-n(k-1)}e^{\left(-\frac{z_0}{\lambda t}\frac{q^{-n}}{1-q}\right)^k}\frac{(-1)^nq^{\frac{n(n+1)}{2}}}{(q;q)_n(q;q)_{\infty}}.
 \end{multline*}
\end{Ex}

In the last example we consider multisummable solution of the following equation
\begin{Ex}\label{ex:6}
As previously we assume that
$$\varphi(z)=\frac{a}{z-z_0}=-\sum_{n=0}^{\infty}\frac{a}{z_0^{n+1}}z^n.$$
We consider a formal solution $\hat{u}$ of the Cauchy problem
\begin{equation}\displaystyle
\label{eq:special3}
    \left\{
    \begin{array}{l}
     \big(D_{q,t}-\lambda_1\partial_{\Gamma_{1/2},z}\big)\big(D_{q,t}-\lambda_2\partial_{z}\big)u=0\\
     u(0,z)=2\varphi(z)=\frac{2a}{z-z_0},\\
     D_{q,t}u(0,z)=\lambda_1\partial_{\Gamma_{1/2},z}\varphi(z)+\lambda_2\partial_z\varphi(z)=-\sum_{n=0}^{\infty}\frac{\Gamma(1+(n+1)/2)a\lambda_1}{\Gamma(1+n/2)z_0^{n+2}}z^n-\frac{a\lambda_2}{(z-z_0)^2}.
    \end{array}
    \right.
 \end{equation}
 Observe that $\hat{u}=\hat{u}_1+\hat{u}_2$, where $\hat{u}_1$ is a formal solution of
 \begin{equation}\displaystyle
\label{eq:special4}
   \left\{
    \begin{array}{l}
     \big(D_{q,t}-\lambda_1\partial_{\Gamma_{1/2},z}\big)u_1=0\\
     u_1(0,z)=\varphi(z)=\frac{a}{z-z_0}
    \end{array}
    \right. 
 \end{equation}
 and $\hat{u}_2$ is a formal solution of 
 \begin{equation}\displaystyle
\label{eq:special5}
    \left\{
    \begin{array}{l}
     \big(D_{q,t}-\lambda_2\partial_{z}\big)u_2=0\\
     u_2(0,z)=\varphi(z)=\frac{a}{z-z_0}
    \end{array}
    \right..
 \end{equation}
By the previous examples $\hat{u}_1$ is $2$-summable with a unique singular direction $d_1=\arg z_0 - \arg\lambda_1$ and $\hat{u}_2$ is $1$-summable with a unique singular direction $d_2=\arg z_0 - \arg\lambda_2$. Therefore $\hat{u}$ is $(2,1)$-multisummable, $d_1$ is a singular direction of $\hat{u}$ of level $2$ and $d_2$ is a singular direction of $\hat{u}$ of level $1$. Moreover
\begin{equation*}
  J_{L_{d_1},2}\hat{u}(t,0)
  =-\frac{2a}{\lambda_1(1-q)t^2}\sum_{n=0}^{\infty}q^{-n}e^{\left(-\frac{z_0}{\lambda_1 t}\frac{q^{-n}}{1-q}\right)^2}\frac{(-1)^nq^{\frac{n(n+1)}{2}}}{(q;q)_n(q;q)_{\infty}}
\end{equation*}
and
\begin{equation*}
 J_{L_{d_2},1}\hat{u}(t,0)=-\frac{a}{z_0t}\sum_{n=0}^{\infty}\exp\left(-\frac{z_0}{\lambda_2 t}\frac{q^{-n}}{1-q}\right)\frac{(-1)^nq^{\frac{n(n+1)}{2}}}{(q;q)_n(q;q)_{\infty}}.
\end{equation*}
\end{Ex}

\end{document}